\documentclass[12pt]{amsart}
\usepackage{amsmath}
\usepackage{amsthm}
\usepackage{tikz}
\usepackage{tikz-cd}
\usepackage{hyperref}
\usepackage[left=2.5cm,right=2.5cm,top=3cm,bottom=3cm]{geometry}
\usepackage{eucal}
\usepackage{palatino}
\usepackage{euler}
\usepackage{mathrsfs}
\usepackage{amssymb}
\usepackage{amscd}
\usepackage{latexsym}
\usepackage{epsfig}
\usepackage{graphicx}
\usepackage{amsfonts}

\usepackage{psfrag}
\usepackage{caption}
\usepackage{rotating}
\usepackage{mathtools}
\usepackage{fullpage}
\usepackage{etoolbox} 

\usepackage[normalem]{ulem}

\usepackage{pifont}
\usepackage{epsfig}

\usepackage{url}
\usepackage{cite}
\usepackage{dsfont}
\usepackage{array}

\newtheorem{theorem}{Theorem}[section]
\newtheorem{proposition}[theorem]{Proposition}
\newtheorem{corollary}[theorem]{Corollary}
\newtheorem{lemma}[theorem]{Lemma}

\theoremstyle{definition}
\newtheorem{definition}[theorem]{Definition}
\newtheorem{example}[theorem]{Example}
\newtheorem{remark}[theorem]{Remark}

\DeclareMathOperator{\GL}{GL}
\DeclareMathOperator{\SL}{SL}

\DeclareMathOperator{\Adj}{Ad}

\DeclareMathOperator{\adj}{ad}

\DeclareMathOperator{\Hom}{Hom}

\DeclareMathOperator{\Spec}{Spec}
\DeclareMathOperator{\trace}{tr}

\newtoggle{showColor}
\newtoggle{showNotes}
\toggletrue{showColor}
\togglefalse{showNotes}

\iftoggle{showNotes} {%
	\newcommand{\note}[1]{{\textcolor{red}{$\langle$#1$\rangle$}}} 
}{%
	\newcommand{\note}[1]{}
}



\title{Complex adjoint orbits in Lie theory and geometry}
\author{Peter Crooks}
\address{Institute of Differential Geometry, Gottfried Wilhelm Leibniz Universit\"{a}t Hannover, Welfengarten 1, 30167 Hannover, Germany}
\email{~~~peter.crooks@math.uni-hannover.de}


\begin{document}
\begin{abstract}
This expository article is an introduction to the adjoint orbits of complex semisimple groups, primarily in the algebro-geometric and Lie-theoretic contexts, and with a pronounced emphasis on the properties of semisimple and nilpotent orbits. It is intended to build a foundation for more specialized settings in which adjoint orbits feature prominently (ex. hyperk\"{a}hler geometry, Landau-Ginzburg models, and the theory of symplectic singularities). Also included are a few arguments and observations that, to the author's knowledge, have not yet appeared in the research literature.          
\end{abstract}

\subjclass[2010]{22-02 (primary); 14L30, 14M17, 32M10 (secondary)}

\maketitle

\tableofcontents

\section{Introduction}\label{Section: Introduction}
\subsection{Motivation and objectives}
In linear algebra, it is a classical exercise to determine whether two given matrices, $A,B\in\text{Mat}_{n\times n}(\mathbb{C})$, are conjugate, ie. whether $B=QAQ^{-1}$ for some invertible $Q\in\text{Mat}_{n\times n}(\mathbb{C})$. More generally, one can fix $A\in\text{Mat}_{n\times n}(\mathbb{C})$ and consider the set of all $B\in\text{Mat}_{n\times n}(\mathbb{C})$ that are conjugate to $A$. This set is precisely the conjugacy class of $A$, and, as we shall see, it is the paradigm example of a complex adjoint orbit. Despite this classical motivation, complex adjoint orbits are central to modern research in algebraic geometry \cite{Beauville,BeauvilleSingular,Fu,FuNamikawa,Hesselink,Kaledin,Kraft}, differential geometry \cite{BielawskiOnthe,BielawskiReducible,Biquard,Santa-Cruz,Kovalev,KronheimerInstantons,KronheimerSemisimple}, Lie theory \cite{Collingwood,Vogan,Sommers,Sommers2,Graham2}, and geometric representation theory \cite{Chriss,Jantzen,Springer,Borho}           
 
This article is an introduction to complex adjoint orbits in algebraic geometry, Lie theory, and, to a lesser extent, the other fields mentioned above. It is written with a view to meeting several key objectives. Firstly, it should be accessible to a fairly broad mathematical audience and require only introductory algebraic geometry and Lie theory as prerequisites. Secondly, it aims to build a clear, intuitive, and reasonably self-contained foundation for more advanced topics in which complex adjoint orbits play a role. Thirdly, it is designed to motivate and outline some of these more advanced topics (ex. hyperk\"{a}hler geometry, Landau-Ginzburg models, and symplectic singularities). Our final objective is to emphasize a few contextually appropriate arguments that, while possibly well-known to experts, do not seem to appear in the research literature. The arguments in question are the proofs of Proposition \ref{Proposition: Stabilizer structure}, Lemma \ref{Lemma: K-stabilizer}, Proposition \ref{Proposition: Diffeomorphism with the cotangent bundle}, Corollary \ref{Corollary: Equivariant homotopy equivalence}, Proposition \ref{Proposition: Intersection of opposite parabolics}, Theorem \ref{Theorem: Equivariant projective compactification}, and Proposition \ref{Proposition: Nilpotent orbits form a connected component}. While these are new proofs, at least to the author's knowledge, we emphasize that the underlying results themselves are well-known.                   

\subsection{Structure of the article} 
This article is organized as follows. Section \ref{Section: Preliminaries on algebraic group actions} addresses some of the relevant background on actions of complex algebraic groups, focusing largely on the structure of algebraic group orbits. It includes a number of examples, some of which are designed to give context and build intuition for later sections. Section \ref{Section: Preliminaries on Lie theory} subsequently discusses a few critical topics in Lie theory. Firstly, in the interest of clarity and consistency in later sections, \ref{Subsection: The basic objects} fixes various objects that arise when one studies representations of a complex semisimple group (ex. a maximal torus, a Borel subgroup, weights, roots, etc.). Secondly, \ref{Subsection: semisimple and nilpotent elements} and \ref{Subsection: The Jacobson-Morozov Theorem and sl2-triples} review the basics of semisimple and nilpotent elements in a complex semisimple Lie algebra.

Section \ref{Section: General adjoint orbits} specializes the previous sections to examine the adjoint orbits of a complex semisimple group. Adjoint orbits are defined in \ref{Subsection: Definitions and conventions}, and \ref{Subsection: Dimension and regularity} then introduces \textit{regular} adjoint orbits. Next, \ref{Subsection: Some first results} presents some results relating Jordan decompositions to the study of adjoint orbits. Section \ref{Subsection: The adjoint quotient} discusses adjoint orbits in the context of the \textit{adjoint quotient}, concluding with some of Kostant's foundational results (see Theorem \ref{Theorem: Kostant's theorem}). The topics become more manifestly geometric in \ref{Subsection: Geometric features}, which mentions the role of adjoint orbits in hyperk\"{a}hler and holomorphic symplectic geometry.

Section \ref{Section: Semisimple orbits} is devoted to semisimple adjoint orbits and their features. These orbits are introduced in \ref{Subsection: Definitions and characterizations}, where it is also shown that semisimple orbits are exactly the closed orbits of the adjoint action (see Theorem \ref{Theorem: Semisimple equivalent to closed}). Section \ref{Subsection: Stabilizer descriptions} develops a uniform description of a semisimple element's stabilizer under the adjoint action of a complex semisimple group (see Proposition \ref{Proposition: Stabilizer structure} and Corollary \ref{Corollary: Stabilizers of semisimple elements are reductive}). The discussion becomes more esoteric in Sections \ref{Subsection: Fibrations over partial flag varieties} and \ref{Subsection: Equivariant projective compactifications}. The former shows semisimple orbits to be explicitly diffeomorphic to cotangent bundles of partial flag varieties $G/P$ (see Proposition \ref{Proposition: Diffeomorphism with the cotangent bundle}), which is consistent with results in hyperk\"{a}hler geometry (see Remark \ref{Remark: Connection to hyperkahler geometry}). Section \ref{Subsection: Equivariant projective compactifications} reviews a procedure for compactifying semisimple orbits (see Theorem \ref{Theorem: Equivariant projective compactification}), and subsequently gives some context from the homological mirror symmetry program (see Remark \ref{Remark: Connection to mirror symmetry}).

Section \ref{Section: Nilpotent orbits} is exclusively concerned with nilpotent adjoint orbits and their properties. It begins with \ref{Subsection: Definitions and first results}, in which nilpotent orbits are defined and then characterized in several equivalent ways. Also included is a proof that there are only finitely many nilpotent orbits of a given complex semisimple group (see Theorem \ref{Theorem: Finitely many nilpotent orbits}). Section \ref{Subsection: The closure order on nilpotent orbits} studies the set of nilpotent orbits as a poset, partially ordered by the closure order. In particular, it recalls Gerstenhaber and Hesselink's classical description of this poset in Lie type $A$ (see Example \ref{Example: Nilpotent orbits and the dominance order}). Section \ref{Subsection: The regular nilpotent orbit} shows there to be a unique maximal element in the poset of nilpotent orbits (see Proposition \ref{Proposition: Unique maximal}), called the \textit{regular nilpotent orbit}. We also recall Kostant's construction of a standard representative for this orbit (see Proposition \ref{Proposition: Nilpotent orbit representative}). Next, \ref{Subsection: The minimal nilpotent orbit} introduces $\mathcal{O}_{\text{min}}$, the minimal (non-zero) nilpotent orbit of a complex simple group. This leads to a discussion of nilpotent orbit projectivizations in \ref{Subsection: Orbit projectivizations}, where $\mathbb{P}(\mathcal{O}_{\text{min}})$ and its properties are discussed in detail. Some emphasis is placed on the role of $\mathbb{P}(\mathcal{O}_{\text{min}})$ in quaternionic-K\"{a}hler geometry. Lastly, \ref{Subsection: Orbit closures and singularities} outlines how nilpotent orbit closures can arise in geometric representation theory, and in the study of symplectic singularities and resolutions.                                    

\subsection{Acknowledgements}
The author is grateful to Roger Bielawski, Steven Rayan, and Markus R\"{o}ser for many fruitful discussions, and to the Institute of Differential Geometry at Leibniz Universit\"{a}t Hannover for its hospitality.

\subsection{General conventions}

While many of our conventions will be declared in Sections \ref{Section: Preliminaries on algebraic group actions} and \ref{Section: Preliminaries on Lie theory}, the following are some things we can establish in advance.
\begin{itemize}
\item We will discuss several objects that, strictly speaking, make sense only when a base field has been specified (ex. vector spaces, Lie algebras, representations, algebraic varieties, etc.). Unless we indicate otherwise, we will always take this field to be $\mathbb{C}$.
\item If $X$ is a set with an equivalence relation $\sim$, then $[x]\in X/{\sim}$ will denote the equivalence class of $x\in X$. The equivalence relations of interest to us will usually come from the (left) action of a group $G$ on a set $X$, in which $x_1\sim x_2$ if and only if $x_2=g\cdot x_1$ for some $g\in G$. In such cases, we will denote the quotient set $X/{\sim}$ by $X/G$.
\item Many of the spaces we will consider can be viewed both as algebraic varieties with their Zariski topologies, and as manifolds with their analytic topologies. Accordingly, if one of our statements has topological content, the reader should by default understand the underlying topology to be the Zariski topology. Only when a space does not have a clear algebraic variety structure (ex. a compact Lie group or an orbit thereof), or when manifold-theoretic terms are being used (ex. diffeomorphism, holomorphic, homotopy-equivalence, equivariant cohomology) should we be understood as referring to the analytic topology.            
\end{itemize} 

\section{Preliminaries on algebraic group actions}\label{Section: Preliminaries on algebraic group actions}
In what follows, we review some pertinent concepts from the theory of group actions on algebraic varieties. This review is designed to serve two principal purposes: to clearly establish some of our major conventions, and to give motivation/context for what is to come. However, it is not intended to be a comprehensive overview of algebraic group actions. For this, we refer the reader to \cite[Chapt. II--IV]{Humphreys}.    
\subsection{Definitions and examples}
Let $G$ be a connected linear algebraic group with Lie algebra $\mathfrak{g}$ and adjoint representation $\Adj:G\rightarrow\GL(\mathfrak{g})$. By $G$-\textit{variety}, we shall mean an algebraic variety $X$ equipped with an algebraic $G$-action, namely a variety morphism\\ $G\times X\rightarrow X$, $(g,x)\mapsto g\cdot x$, satisfying the usual properties of a left $G$-action:
\begin{itemize}
	\item[(i)] $e\cdot x=x$ for all $x\in X$, where $e\in G$ is the identity element,
	\item[(ii)] $(g_1g_2)\cdot x=g_1\cdot (g_2\cdot x)$ for all $g_1,g_2\in G$ and $x\in X$.
\end{itemize}
A morphism of $G$-varieties is then appropriately defined to be a $G$-equivariant morphism of algebraic varieties, ie. a variety morphism $\varphi:X\rightarrow Y$ satisfying $\varphi(g\cdot x)=g\cdot\varphi(x)$ for all $g\in G$ and $x\in X$. This induces the definition of a $G$-variety isomorphism, which is precisely a $G$-equivariant isomorphism of algebraic varieties.        

The literature contains many examples of $G$-varieties and related ideas, some of which we mention below.

\begin{example}\label{Example: Left-multiplication} Let $G=\GL_n(\mathbb{C})$ act by left multiplication on the vector space of $n\times n$ matrices $\text{Mat}_{n\times n}(\mathbb{C})$, so that $g\cdot A=gA$, $g\in GL_n(\mathbb{C}),A\in\text{Mat}_{n\times n}(\mathbb{C}).$ In this way, $\text{Mat}_{n\times n}(\mathbb{C})$ is a $\GL_n(\mathbb{C})$-variety.
\end{example}	

\begin{example}\label{Example: Conjugation example}Alternatively, one has the conjugation action of $\GL_n(\mathbb{C})$ on $\text{Mat}_{n\times n}(\mathbb{C})$, ie. $g\cdot A=gAg^{-1}$, $g\in GL_n(\mathbb{C}),A\in\text{Mat}_{n\times n}(\mathbb{C}).$ This, too, renders $\text{Mat}_{n\times n}(\mathbb{C})$ a $\GL_n(\mathbb{C})$-variety.
\end{example}

\begin{example}\label{Example: partial flag variety in type A}
By a \textit{flag in } $\mathbb{C}^n$, we shall mean a sequence $$V_{\bullet}=(\{0\}\subsetneq V_1\subsetneq V_2\subsetneq\ldots\subsetneq V_k\subsetneq\mathbb{C}^n)$$ of subspaces of $\mathbb{C}^n$. Now, suppose that $d_{\bullet}=(d_1,d_2,\ldots,d_k)$ is a strictly increasing sequence of integers with $1\leq d_i\leq n-1$ for all $i=1,\ldots,k$. We denote by $\text{Flag}(d_{\bullet},\mathbb{C}^n)$ the set of all flags in $\mathbb{C}^n$ whose constituent subspaces have dimensions $d_1,d_2,\ldots,d_k$, namely 
$$\text{Flag}(d_{\bullet};\mathbb{C}^n)=\{V_{\bullet}:\dim(V_i)=d_i,\text{ }i=1,\ldots,k\}.$$ This is a smooth projective variety and it carries a $\GL_n(\mathbb{C})$-variety structure in which $\GL_n(\mathbb{C})$ acts via
$$g\cdot V_{\bullet}=(\{0\}\subsetneq gV_1\subsetneq gV_2\subsetneq\ldots\subsetneq gV_k\subsetneq\mathbb{C}^n),\quad g\in\GL_n(\mathbb{C}),V_{\bullet}\in\text{Flag}(d_{\bullet},\mathbb{C}^n).$$
\end{example} 

\begin{example}
Let $X$ be a $G$-variety and $H\subseteq G$ a closed subgroup. One can restrict the $G$-action to $H$ in the sense that $H\times X\rightarrow X$, $(h,x)\mapsto h\cdot x$, $h\in H,x\in X$ defines an $H$-variety structure on $X$.
\end{example}

\begin{example}\label{Example: restriction to an invariant subvariety}
Let $X$ be a $G$-variety and $Y\subseteq X$ a locally closed subvariety.\footnote{Recall that a subset of $X$ is defined to be locally closed if it is expressible as the intersection of an open and a closed subset of $X$. Equivalently, a subset is locally closed if and only if the subset is open in its closure.} If $g\cdot y\in Y$ for all $g\in G$ and $y\in Y$, we will refer to $Y$ as being $G$-\textit{invariant} (or simply \textit{invariant} if the action is clear from context). In this case, the map $G\times Y\rightarrow Y$, $(g,y)\mapsto g\cdot y$, is a $G$-variety structure on $Y$.
\end{example}

\begin{example}\label{Example: Diagonal action}
Let $X$ and $Y$ be $G$-varieties. The product $X\times Y$ then carries a canonical $G$-variety structure, defined by the ``diagonal'' $G$-action $$g\cdot(x,y)=(g\cdot x,g\cdot y),\quad g\in G,\text{ }(x,y)\in X\times Y.$$ 
\end{example}
	
\begin{example}\label{Example: action from representation}
Let $V$ be a $G$-\textit{representation}, meaning for us that $V$ is a finite-dimensional vector space together with a morphism of algebraic groups $\rho:G\rightarrow\GL(V)$ (ex. $V=\mathfrak{g}$ and $\rho=\Adj$). In this case, the map $G\times V\rightarrow V$, $(g,v)\mapsto\rho(g)(v)$, gives $V$ the structure of a $G$-variety. Note that Examples \ref{Example: Left-multiplication} and \ref{Example: Conjugation example} feature $G$-varieties induced by representations.
\end{example}

\begin{example}\label{Example: Isotropy representations}
As a counterpart to the last example, one can use $G$-varieties to induce representations. To this end, let $X$ be a $G$-variety and for each $g\in G$ let $\phi_g:X\rightarrow X$ be the automorphism defined by $\phi_g(x)=g\cdot x$. Now suppose $H\subseteq G$ is a closed subgroup and $x\in X$ is a point satisfying $h\cdot x=x$ (ie. $\phi_h(x)=x$) for all $h\in H$. It follows that the differential of $\phi_h$ at $x$ is a vector space automorphism $d_x(\phi_h):T_xX\rightarrow T_xX$ of the tangent space $T_xX$. Furthermore, one can verify that $H\rightarrow\GL(T_xX)$, $h\mapsto d_x(\phi_h)$, is an $H$-representation.
\end{example}

\begin{example}\label{Example: Descent to projective space} Let $\rho:G\rightarrow\GL(V)$ be a $G$-representation, and let $\mathbb{P}(V)=(V\setminus\{0\})/\mathbb{C}^*$ denote the projectivization of $V$. The formula $g\cdot[v]=[\rho(g)(v)]$ gives a well-defined action of $G$ on $\mathbb{P}(V)$, rendering the latter a $G$-variety.
\end{example}

\begin{example}\label{Example: Induced action on functions}
Let $X$ be an affine $G$-variety and $\mathbb{C}[X]$ the algebra of regular functions on X. This algebra carries a canonical action of $G$ by algebra automorphisms, in which $g\in G$ acts on $f\in\mathbb{C}[X]$ by $(g\cdot f)(x):=f(g^{-1}\cdot x)$, $x\in X$.
\end{example}

\begin{example}\label{Example: Quotient by a closed subgroup} Let $H$ be a closed subgroup of $G$. The set of left $H$-cosets $X=G/H=\{gH:g\in G\}$ is naturally an algebraic variety. Furthermore, when $G$ acts by left multiplication, $G/H$ is a $G$-variety.   
\end{example}

A few remarks are in order. Firstly, one may summarize Examples \ref{Example: action from representation} and \ref{Example: Descent to projective space} together as the statement that a $G$-representation $V$ induces canonical $G$-variety structures on both $V$ and $\mathbb{P}(V)$. We will always understand $V$ and $\mathbb{P}(V)$ to be $G$-varieties in exactly this way when $V$ is a $G$-representation. Secondly, we shall always view a variety $G/H$ as carrying the left-multiplicative action of $G$.

\subsection{Generalities on orbits}\label{Subsection: Generalities on orbits}
Let $X$ be a $G$-variety. One may associate to each point $x\in X$ the subset \begin{equation}\label{Equation: Orbit definition}
\mathcal{O}_G(x):=\{g\cdot x:g\in G\},
\end{equation}
called the $G$-\textit{orbit of} $x$. Should the underlying group $G$ be clear from context, we will sometimes suppress it and write $\mathcal{O}(x)$ in place of $\mathcal{O}_G(x)$. Now, let us call $\mathcal{O}\subseteq X$ a $G$-\textit{orbit} if $\mathcal{O}=\mathcal{O}(x)$ for some $x\in X$. In this case, $\overline{\mathcal{O}}$ shall denote the closure of $\mathcal{O}$ in $X$. The orbit $\mathcal{O}$ is open in $\overline{\mathcal{O}}$ (ie. is locally closed), so that $\mathcal{O}$ inherits from $X$ the structure of a subvariety. Furthermore, $\mathcal{O}$ is known to be a smooth subvariety of $X$. 	

\begin{example}Let $\GL_n(\mathbb{C})$ act by left multiplication on $\text{Mat}_{n\times n}(\mathbb{C})$. If $A,B\in\text{Mat}_{n\times n}(\mathbb{C})$, then $B=gA$ for some $g\in\GL_n(\mathbb{C})$ if and only if $A$ and $B$ are equivalent under row operations, which holds if and only if their reduced echelon forms coincide. It follows that the $\GL_n(\mathbb{C})$-orbits are parametrized by the set of $n\times n$ matrices in reduced echelon form. In particular, the orbit of matrices with reduced echelon form $I_n$ is $\mathcal{O}(I_n)=\GL_n(\mathbb{C})\subseteq\text{Mat}_{n\times n}(\mathbb{C})$. This orbit is open and dense. 
\end{example}

\begin{example}\label{Example: Conjugation}	 
Let $\GL_n(\mathbb{C})$ act by conjugation on $\text{Mat}_{n\times n}(\mathbb{C})$, so that the $\GL_n(\mathbb{C})$-orbits are precisely the conjugacy classes of matrices in $\text{Mat}_{n\times n}(\mathbb{C})$. These classes are indexed by the equivalence classes of $n\times n$ matrices in Jordan canonical form, where two such matrices are equivalent if they agree up to a reordering of Jordan blocks along the diagonal. To build some algebro-geometric intuition for these orbits, we specialize to three cases. 
\begin{itemize}
\item Suppose that $A\in\text{Mat}_{n\times n}(\mathbb{C})$ is diagonal with pairwise distinct eigenvalues\\ $\lambda_1,\ldots,\lambda_n\in\mathbb{C}$. If $B\in\text{Mat}_{n\times n}(\mathbb{C})$, then $B\in\mathcal{O}(A)$ if and only if the characteristic polynomial of $B$ has the form $$\det(\lambda I_n-B)=(\lambda-\lambda_1)(\lambda-\lambda_2)\cdots(\lambda-\lambda_n).$$ Equating corresponding coefficients of $\lambda^k$ for each $k\in\{0,1,\ldots,n-1\}$, we obtain $n$ polynomial equations in the entries of $B$. From this, it follows that $\mathcal{O}(A)$ is a closed (hence affine) subvariety of $\text{Mat}_{n\times n}(\mathbb{C})$. 
\item Let $N_1\in\text{Mat}_{n\times n}(\mathbb{C})$ be the matrix consisting of a single nilpotent Jordan block. If $B\in\text{Mat}_{n\times n}(\mathbb{C})$, then $B\in\mathcal{O}(N_1)$ if and only $B$ has a minimal polynomial of $\lambda^n$, ie.
$$\mathcal{O}(N_1)=\{B\in\text{Mat}_{n\times n}(\mathbb{C}):B^n=0,\text{ }B^{n-1}\neq 0\}.$$ The conditions $B^n=0$ and $B^{n-1}\neq 0$ define closed and open subsets of $\text{Mat}_{n\times n}(\mathbb{C})$, respectively, and $\mathcal{O}(N_1)$ is the intersection of these subsets. We thus see explicitly that $\mathcal{O}(N_1)$ is locally closed. Also, one can show that $\overline{\mathcal{O}(N_1)}=\{B\in\text{Mat}_{n\times n}(\mathbb{C}):B^n=0\}$, the locus of all nilpotent matrices.
\item Let $N_2\in\text{Mat}_{n\times n}(\mathbb{C})$ be a nilpotent matrix in Jordan canonical form having a single $2\times 2$ Jordan block and all remaining blocks of dimension $1\times 1$. To describe $\mathcal{O}(N_2)$, note that the Jordan canonical form of a nilpotent $B\in\text{Mat}_{n\times n}(\mathbb{C})$ has only $1\times 1$ and $2\times 2$ blocks if and only if $B^2=0$. To ensure the existence of exactly one $2\times 2$ block, one must impose the condition $\text{rank}(B)=1$. Equivalently, one requires that $B\neq 0$ and that all $2\times 2$ minors of $B$ vanish. It follows that \begin{align*}\mathcal{O}(N_2) & =\{B\in\text{Mat}_{n\times n}(\mathbb{C}):B^2=0,\text{ rank}(B)=1\}\\ & = \{B\in\text{Mat}_{n\times n}(\mathbb{C}):B^2=0,\text{ every } 2\times 2\text{ minor of }B=0,\text{ }B\neq 0\}.
\end{align*}
As with $\mathcal{O}(N_1)$, we explicitly see that $\mathcal{O}(N_2)$ is the intersection of a closed subset of $\text{Mat}_{n\times n}(\mathbb{C})$ (defined by $B^2=0$ and the vanishing of the $2\times 2$ minors) with an open subset (defined by $B\neq 0$). Furthermore, it is not difficult to see that $0\in\overline{\mathcal{O}(N_2)}$, and that $\mathcal{O}(N_2)\cup\{0\}$ is the closed subset mentioned in the previous sentence. We conclude that $\overline{\mathcal{O}(N_2)}=\mathcal{O}(N_2)\cup\{0\}$.      
\end{itemize}     
\end{example}

\begin{example}\label{Example: second partial flag variety in type A}
Refer to Example \ref{Example: partial flag variety in type A}, in which $\GL_n(\mathbb{C})$ acts on $\text{Flag}(d_{\bullet};\mathbb{C}^n)$. One can verify that this action is transitive, so that $\text{Flag}(d_{\bullet};\mathbb{C}^n)$ is the unique $\GL_n(\mathbb{C})$-orbit. However, let us restrict the $\GL_n(\mathbb{C})$-action to the Borel subgroup $B$ of upper-triangular matrices. To describe the $B$-orbits, let $w\in S_n$ be a permutation and consider the point
$$V_{\bullet}^w:=(\{0\}\subsetneq V_{1}^w\subsetneq V_2^w\subsetneq\ldots\subsetneq V_{k}^w\subsetneq\mathbb{C}^n)\in \text{Flag}(d_{\bullet};\mathbb{C}^n),$$ where $V_i^w:=\text{span}\{e_{w(1)},e_{w(2)},\ldots,e_{w(d_i)}\}$, $i\in\{1,2,\ldots,k\}$. It turns out that $w\mapsto\mathcal{O}_B(V_{\bullet}^w)$ is a surjection from $S_n$ to the collection of $B$-orbits. Furthermore, $\mathcal{O}_B(V_{\bullet}^w)$ is isomorphic to affine space and called a \textit{Schubert cell}, alluding to the fact that the $B$-orbits constitute a cell decomposition of $\text{Flag}(d_{\bullet};\mathbb{C}^n)$. The $B$-orbit closures are the well-studied \textit{Schubert varieties}. For further details in this direction, we refer the reader to \cite[Chapt. 10]{Fulton}.   
\end{example}

\subsection{Orbits as quotient varieties}\label{Subsection: Orbits as quotient varieties}
Let $X$ be a $G$-variety and $\mathcal{O}\subseteq X$ an orbit. Since $\mathcal{O}$ is locally closed and $G$-invariant, it inherits from $X$ the structure of a $G$-variety (cf. Example \ref{Example: restriction to an invariant subvariety}). To better understand this structure, fix a point $x\in\mathcal{O}$ and consider its $G$-stabilizer
\begin{equation}\label{Equation: Stabilizer definition}
C_G(x):=\{g\in G:g\cdot x=x\}.
\end{equation}
This is a closed subgroup of $G$ and one has a well-defined $G$-variety isomorphism
\begin{equation}\label{Equation: Orbit-stabilizer isomorphism}
G/C_G(x)\xrightarrow{\cong}\mathcal{O},\quad [g]\mapsto g\cdot x.
\end{equation}
Note that \eqref{Equation: Orbit-stabilizer isomorphism} is just the Orbit-Stabilizer Theorem in an algebro-geometric context.

One can use \eqref{Equation: Orbit-stabilizer isomorphism} to yield a convenient description of the tangent space $T_x\mathcal{O}$. Indeed, since \eqref{Equation: Orbit-stabilizer isomorphism} sends the identity coset $[e]\in G/C_G(x)$ to $x$, it determines an isomorphism of tangent spaces, 
\begin{equation}\label{Equation: Vector space isomorphism}
T_{[e]}(G/C_G(x))\xrightarrow{\cong} T_x\mathcal{O}.
\end{equation}
Also, the differential of $G\rightarrow G/C_G(x)$ at $e\in G$ is surjective with kernel the Lie algebra of $C_G(x)$. Let us denote this Lie algebra by $C_{\mathfrak{g}}(x)$, so that we have $T_{[e]}(G/C_G(x))\cong\mathfrak{g}/C_{\mathfrak{g}}(x)$ as vector spaces. The isomorphism \eqref{Equation: Vector space isomorphism} then takes the form   
\begin{equation}\label{Equation: Tangent space description}
T_x\mathcal{O}\cong\mathfrak{g}/C_{\mathfrak{g}}(x).
\end{equation}

It turns out that \eqref{Equation: Tangent space description} is more than an isomorphism of vector spaces. To see this, note that $T_x\mathcal{O}$ is a $C_G(x)$-representation by virtue of $x$ being fixed by $C_G(x)$ (see Example \ref{Example: Isotropy representations}). At the same time, $\mathfrak{g}$ carries the adjoint representation of $G$, and one can restrict this to a representation of $C_G(x)$. One notes that $C_{\mathfrak{g}}(x)$ is a $C_G(x)$-invariant subspace, making the quotient $\mathfrak{g}/C_{\mathfrak{g}}(x)$ a $C_G(x)$-representation. With this discussion in mind, one can verify that \eqref{Equation: Tangent space description} is actually an isomorphism of $C_G(x)$-representations.

\begin{example}\label{Example: Stabilizer in a flag variety}
	Recall the action of $\GL_n(\mathbb{C})$ on $\text{Flags}(d_{\bullet};\mathbb{C}^n)$ from Example \ref{Example: partial flag variety in type A}. Following the notation of Example \ref{Example: second partial flag variety in type A}, the $\GL_n(\mathbb{C})$-stabilizer of the flag $V^e_{\bullet}$ is
	$$C_{\GL_n(\mathbb{C})}(V^e_{\bullet})=\{(A_{ij})\in\GL_n(\mathbb{C}):A_{ij}=0\text{ for all }r\in\{1,\ldots,k\}, i>d_r, j\in\{d_{r-1},\ldots,d_r\}\}.$$ More simply, a matrix $A\in\GL_n(\mathbb{C})$ is in $C_{\GL_n(\mathbb{C})}(V^e_{\bullet})$ if and only if it is block upper-triangular with blocks of dimensions $d_1\times d_1, (d_2-d_1)\times (d_2-d_1),\ldots,(d_k-d_{k-1})\times (d_k-d_{k-1}),(n-d_k)\times (n-d_k)$ along the diagonal (read from top to bottom). Now, since the action of $\GL_n(\mathbb{C})$ on $\text{Flags}(d_{\bullet};\mathbb{C}^n)$ can be shown to be transitive, \eqref{Equation: Orbit-stabilizer isomorphism} gives an isomorphism
	$$\GL_n(\mathbb{C})/C_{\GL_n(\mathbb{C})}(V^e_{\bullet})\cong \text{Flags}(d_{\bullet};\mathbb{C}^n).$$    
\end{example}

\begin{example}
	Recall Example \ref{Example: Conjugation}, and let $A,N_1\in\text{Mat}_{n\times n}(\mathbb{C})$ be as introduced there. One can verify that $C_{\GL_n(\mathbb{C})}(A)$ is the maximal torus of diagonal matrices in $\GL_n(\mathbb{C})$, while $C_{\GL_n(\mathbb{C})}(N_1)$ can be seen to have the following description:
	$$C_{\GL_n(\mathbb{C})}(N_1)=\{(B_{ij})\in\GL_n(\mathbb{C}):B_{ij}=0\text{ for }i>j, \text{ }B_{ij}=B_{(i+1)(j+1)}\text{ for }1\leq i\leq j\leq n-1\}.$$
	In other words, a matrix $B\in\GL_n(\mathbb{C})$ is in  $C_{\GL_n(\mathbb{C})}(N_1)$ if and only if $B$ is upper-triangular and any two entires of $B$ lying on the same diagonal must be equal. Now, let $U\subseteq C_{\GL_n(\mathbb{C})}(N_1)$ be the unipotent subgroup of matrices having only $1$ along the main diagonal. This subgroup is complementary to the centre $Z(\GL_n(\mathbb{C}))$, and one has an internal direct product decomposition
	$C_{\GL_n(\mathbb{C})}(N_1)=Z(\GL_n(\mathbb{C}))\times U$. 
\end{example}

\subsection{The closure order}\label{Subsection: The closure order}
The set of orbits in a $G$-variety $X$ turns out to have a canonical partial order, based fundamentally on the following proposition. 

\begin{proposition}\label{Proposition: orbit boundary}
If $\mathcal{O}\subseteq X$ is a $G$-orbit, then $\overline{\mathcal{O}}$ is a union of $\mathcal{O}$ and orbits having dimensions strictly less than $\dim(\mathcal{O})$.	
\end{proposition}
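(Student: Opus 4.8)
The plan is to show two things: first, that $\overline{\mathcal{O}}$ is a union of $G$-orbits, and second, that every orbit in $\overline{\mathcal{O}}$ other than $\mathcal{O}$ itself has strictly smaller dimension. The first point is almost immediate: the $G$-action $G \times X \to X$ is by variety automorphisms, so each $\phi_g$ is a homeomorphism of $X$ in the Zariski topology, and hence $\phi_g(\overline{\mathcal{O}}) = \overline{\phi_g(\mathcal{O})} = \overline{\mathcal{O}}$. Thus $\overline{\mathcal{O}}$ is $G$-invariant, and being $G$-invariant, it is a disjoint union of the $G$-orbits it meets. Since $\mathcal{O} \subseteq \overline{\mathcal{O}}$, we can write $\overline{\mathcal{O}} = \mathcal{O} \sqcup \bigsqcup_i \mathcal{O}_i$ for the remaining orbits $\mathcal{O}_i$.

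The substantive claim is the dimension inequality. Let $\mathcal{O}'$ be an orbit with $\mathcal{O}' \subseteq \overline{\mathcal{O}}$ and $\mathcal{O}' \neq \mathcal{O}$. I would argue as follows. Recall that $\mathcal{O}$ is open in its closure $\overline{\mathcal{O}}$ (it is locally closed, as noted in \ref{Subsection: Generalities on orbits}), so $\overline{\mathcal{O}} \setminus \mathcal{O}$ is a closed subset of $\overline{\mathcal{O}}$, hence a closed subvariety; call it $Z$. It is nonempty precisely when $\mathcal{O}$ is not already closed, and it contains $\mathcal{O}'$. Now $\overline{\mathcal{O}}$ is irreducible, being the closure of the image $\mathcal{O}$ of the irreducible variety $G$ under the orbit map $g \mapsto g\cdot x$; and $Z$ is a proper closed subset of $\overline{\mathcal{O}}$ (proper because $\mathcal{O}$ is a nonempty open subset). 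A proper closed subvariety of an irreducible variety has strictly smaller dimension than the ambient variety, so $\dim Z < \dim \overline{\mathcal{O}} = \dim \mathcal{O}$. Finally, $\mathcal{O}'$ is a closed subvariety of $\overline{\mathcal{O}}$ contained in $Z$? Not quite — $\mathcal{O}'$ need not be closed in $\overline{\mathcal{O}}$ — but $\overline{\mathcal{O}'} \subseteq \overline{Z} = Z$ since $Z$ is closed and $G$-invariant (it is the union of orbits other than $\mathcal{O}$, and closed), so $\dim \mathcal{O}' = \dim \overline{\mathcal{O}'} \leq \dim Z < \dim \mathcal{O}$. This completes the argument.

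The main obstacle, such as it is, is making sure the irreducibility and local-closedness inputs are invoked cleanly: one needs that $\mathcal{O}$ is irreducible (from $G$ connected, hence irreducible as a variety, pushed forward along the orbit morphism and then taking closure) and that $\mathcal{O}$ is open in $\overline{\mathcal{O}}$ so that the boundary $\overline{\mathcal{O}} \setminus \mathcal{O}$ is genuinely a proper closed subvariety of positive codimension. Both facts were recorded in \ref{Subsection: Generalities on orbits}, so they may be cited directly. One small subtlety worth a remark: the boundary $\overline{\mathcal{O}} \setminus \mathcal{O}$, being closed and $G$-invariant, decomposes into orbits, and although its irreducible components each have dimension $< \dim\mathcal{O}$, the set itself may well be reducible — but this does not affect the dimension bound on any single orbit inside it, since each such orbit's closure lies in one irreducible component of the boundary (or at least within the boundary, whose dimension is the max over components, still $< \dim \mathcal{O}$).
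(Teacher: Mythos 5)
Your proof is correct and follows essentially the same route as the paper's: irreducibility of $\overline{\mathcal{O}}$ via the orbit morphism from the irreducible group $G$, openness of $\mathcal{O}$ in its closure so that the boundary is a proper closed subvariety, and the strict dimension drop for (components of) that boundary, which then bounds every orbit it contains. The only cosmetic difference is that the paper phrases the final step component-by-component while you bound $\dim(\overline{\mathcal{O}}\setminus\mathcal{O})$ directly; these are the same argument.
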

	
\begin{proof}
Choose $x\in\mathcal{O}$ and note that $\mathcal{O}$ is the image of the morphism $G\rightarrow X$, $g\mapsto g\cdot x$. Since $G$ is irreducible, so too is the image $\mathcal{O}$. It follows that $\overline{\mathcal{O}}$ is also irreducible. Appealing to the fact that $\mathcal{O}$ is locally closed (ie. open in $\overline{\mathcal{O}}$), we see that $\overline{\mathcal{O}}\setminus\mathcal{O}$ is a closed subvariety of $\overline{\mathcal{O}}$, each of whose irreducible components then necessarily has dimension strictly less than $\dim(\overline{\mathcal{O}})=\dim(\mathcal{O})$ (see \cite[Sect. 3.2]{Humphreys}). Of course, as $\overline{\mathcal{O}}\setminus\mathcal{O}$ is $G$-invariant, it is a union of $G$-orbits. Each of these orbits is also irreducible and therefore has dimension at most that of an irreducible component of $\overline{\mathcal{O}}\setminus\mathcal{O}$ to which it belongs. Our previous discussion implies that the dimension of such an orbit is strictly less than $\dim(\mathcal{O})$.   
\end{proof}

Proposition 2 implies that each $G$-orbit $\mathcal{O}$ is the unique orbit in $\overline{\mathcal{O}}$ having dimension $\dim(\mathcal{O})$.
With this in mind, suppose that $\mathcal{O}_1,\mathcal{O}_2\subseteq X$ are $G$-orbits satisfying $\mathcal{O}_1\subseteq\overline{\mathcal{O}_2}$ and $\mathcal{O}_2\subseteq\overline{\mathcal{O}_1}$. These inclusions show $\mathcal{O}_1$ and $\mathcal{O}_2$ to have the same dimension. Since both orbits lie in $\overline{\mathcal{O}_1}$, it follows that $\mathcal{O}_1=\mathcal{O}_2$ must hold. It is thus not difficult to see that 
\begin{equation}\label{Equation: closure order}
\mathcal{O}_1\leq\mathcal{O}_2\Longleftrightarrow\mathcal{O}_1\subseteq\overline{\mathcal{O}_2}
\end{equation}
defines a partial order on the set of $G$-orbits in $X$, which we shall call the \textit{closure order}.  

\begin{example}
Recall the setup of Example \ref{Example: Conjugation}, in which $\GL_n(\mathbb{C})$ acts by conjugation on $\text{Mat}_{n\times n}(\mathbb{C})$ and $A,N_1,N_2\in\text{Mat}_{n\times n}(\mathbb{C})$ are three fixed matrices. Having shown $\mathcal{O}(A)$ to be closed in $\text{Mat}_{n\times n}(\mathbb{C})$, we see that $\mathcal{O}(A)$ is a minimal element in the closure order. Secondly, remembering that $\overline{\mathcal{O}(N_1)}$ is the locus of all nilpotent $n\times n$ matrices, the $G$-orbits in $\overline{\mathcal{O}(N_1)}$ are the conjugacy classes of nilpotent matrices. These conjugacy classes are therefore precisely the orbits $\mathcal{O}\subseteq\text{Mat}_{n\times n}(\mathbb{C})$ satisfying $\mathcal{O}\leq\mathcal{O}(N_1)$. Finally, having seen in Example \ref{Example: Conjugation} that  $\overline{\mathcal{O}(N_2)}=\mathcal{O}(N_2)\cup\{0\}$, we conclude that $\{0\}$ and $\mathcal{O}(N_2)$ are the unique orbits $\mathcal{O}$ satisfying $\mathcal{O}\leq\mathcal{O}(N_2)$.
\end{example}

\begin{example}
Refer to Examples \ref{Example: partial flag variety in type A} and \ref{Example: second partial flag variety in type A}, where $\GL_n(\mathbb{C})$ and its subgroup $B$ act on $\text{Flag}(d_{\bullet};\mathbb{C}^n)$. We will assume that $d_{\bullet}=(1,2,\ldots,n-1)$, in which case $w\mapsto\mathcal{O}_B(V_{\bullet}^w)$ is a bijection from $S_n$ to the set of $B$-orbits. The latter set has the closure order, and there is a unique partial order on $S_n$ for which our bijection is a poset isomorphism. One thereby obtains the well-studied \textit{Bruhat order} on $S_n$, which we now describe. Given $i\in\{1,\ldots,n-1\}$, let $s_i\in S_n$ be the transposition interchanging $i$ and $i+1$. The $n-1$ transpositions $s_i$ generate $S_n$, so that each $w\in S_n$ has an expression $w=s_{i_1}s_{i_2}\cdots s_{i_k}$. One calls such an expression \textit{reduced} if its length $k$ is minimal. Now given $v,w\in S_n$, the Bruhat order is defined as follows: $v\leq w$ if whenever $w=s_{i_1}s_{i_2}\cdots s_{i_k}$ is a reduced expression for $w$, it is possible to delete some of the $s_{i_j}$ so that the resulting expression is a reduced expression for $v$.     
\end{example}

\section{Preliminaries on Lie theory}\label{Section: Preliminaries on Lie theory}
The following is a brief review of Lie-theoretic topics central to this article, with a strong emphasis on the structure of semisimple algebraic groups and their Lie algebras. Like the previous section, this review is more concerned with establishing clear conventions than being comprehensive. References for the relevant Lie theory include \cite[Chapt. III--VIII]{Humphreys} and \cite[Chapt. 20--29]{Tauvel}.   

\subsection{The basic objects}\label{Subsection: The basic objects}
In what follows, we introduce several Lie-theoretic objects to be understood as fixed for the rest of this article. To begin, let $G$ be a connected, simply-connected semisimple linear algebraic group having Lie algebra $\mathfrak{g}$ and exponential map $\exp:\mathfrak{g}\rightarrow G$. Let $$\Adj:G\rightarrow\GL(\mathfrak{g}),\quad g\mapsto\Adj_g,\text{ }g\in G$$ and $$\adj:\mathfrak{g}\rightarrow\mathfrak{gl}(\mathfrak{g}),\quad x\mapsto\adj_x,\text{ }x\in\mathfrak{g}$$ denote the adjoint representations of $G$ and $\mathfrak{g}$, respectively. Recall that $\Adj_g$ is a Lie algebra automorphism of $\mathfrak{g}$ for all $g\in G$, a fact we will use repeatedly. Note also that $\adj_x(y)=[x,y]$ for all $x,y\in\mathfrak{g}$, from which it follows that the kernel of $\adj$ is the centre of $\mathfrak{g}$. This centre is trivial by virtue of the fact that $\mathfrak{g}$ is semisimple. In other words, $\adj$ is injective. 

Now, suppose that $T\subseteq B\subseteq G$, with $T$ a maximal torus of $G$ and $B$ a Borel subgroup of $G$. One then has inclusions $\mathfrak{t}\subseteq\mathfrak{b}\subseteq\mathfrak{g}$, where $\mathfrak{t}$ and $\mathfrak{b}$ are the Lie algebras of $T$ and $B$, respectively. Let $X^*(T)$ be the additive group of weights (ie. algebraic group morphisms $T\rightarrow\mathbb{C}^*$), with the sum of $\lambda_1,\lambda_2\in X^*(T)$ defined by $(\lambda_1+\lambda_2)(t)=\lambda_1(t)\lambda_2(t)$ for all $t\in T$. The map sending $\lambda\in X^*(T)$ to its differential at the identity $e\in T$ includes $X^*(T)$ into $\mathfrak{t}^*$ as an additive subgroup. Where appropriate, we will use this inclusion to regard a weight as belonging to $\mathfrak{t}^*$. 

Given a weight $\alpha\in X^*(T)$, let us set $$\mathfrak{g}_{\alpha}:=\{x\in\mathfrak{g}:\Adj_t(x)=\alpha(t)x\text{ for all }t\in T\}.$$ If we regard $\alpha$ as belonging to $\mathfrak{t}^*$, then $\mathfrak{g}_{\alpha}$ admits the following alternative description:
$$\mathfrak{g}_{\alpha}=\{x\in\mathfrak{g}:\adj_h(x)=\alpha(h)x\text{ for all }h\in\mathfrak{t}\}.$$
One knows that $\mathfrak{g}_0=\mathfrak{t}$, while any non-zero weight $\alpha$ satisfying $\mathfrak{g}_{\alpha}\neq\{0\}$ is called a \textit{root}. In this case, $\mathfrak{g}_{\alpha}$ is one-dimensional and called a \textit{root space}. Furthermore, letting $\Delta\subseteq X^*(T)$ denote the set of roots, we have the following decomposition of $\mathfrak{g}$: \begin{equation}\label{Equation: Root space decomposition}
\mathfrak{g}=\mathfrak{t}\oplus\bigoplus_{\alpha\in\Delta}\mathfrak{g}_{\alpha}.
\end{equation}
One can check that $[\mathfrak{g}_{\alpha},\mathfrak{g}_{\beta}]\subseteq\mathfrak{g}_{\alpha+\beta}$ for all $\alpha,\beta\in\Delta$. In particular, if $\alpha+\beta$ is neither zero nor a root, then $[\mathfrak{g}_{\alpha},\mathfrak{g}_{\beta}]=\{0\}$. 

Our choice of $B$ gives rise to several distinguished subsets of $\Delta$. The positive roots $\Delta_{+}$ are those $\alpha\in\Delta$ for which $\mathfrak{g}_{\alpha}\subseteq\mathfrak{b}$, so that 
$$\mathfrak{b}=\mathfrak{t}\oplus\bigoplus_{\alpha\in\Delta_{+}}\mathfrak{g}_{\alpha}.$$ The negative roots $\Delta_{-}\subseteq\Delta$ are defined by negating the positive roots, ie. $\Delta_{-}:=-\Delta_{+}$. We then have a disjoint union $\Delta=\Delta_{+}\cup\Delta_{-}$. Finally, the simple roots $\Pi\subseteq\Delta$ are positive roots with the property that every $\beta\in\Delta$ can be written as \begin{equation}\label{Equation: Root decomposition}\beta=\sum_{\alpha\in\Pi}c_{\alpha}\alpha\end{equation} for unique coefficients $c_{\alpha}\in\mathbb{Z}$. These coefficients are either all non-negative (in which case $\beta\in\Delta_{+}$) or all non-positive (in which case $\beta\in\Delta_{-}$). Furthermore, the simple roots are known to form a basis of $\mathfrak{t}^*$.   

It will be beneficial to note that the weight lattice carries a particular partial order, defined in terms of the simple roots. Given $\beta,\gamma\in X^*(T)$, one has \begin{equation}\label{Equation: Partial order on the weight lattice}\beta\leq\gamma\Longleftrightarrow\gamma-\beta=\sum_{\alpha\in\Pi}c_{\alpha}\alpha\end{equation} for some strictly non-negative integers $c_{\alpha}\in\mathbb{Z}$.

The Lie algebra $\mathfrak{g}$ carries a distinguished symmetric bilinear form $\langle\cdot,\cdot\rangle:\mathfrak{g}\otimes\mathfrak{g}\rightarrow\mathbb{C}$, called the \textit{Killing form} and defined by 
$$\langle x,y\rangle:=\trace(\adj_x\circ\adj_y),\quad x,y\in\mathfrak{g}.$$ This form is $G$-invariant in the sense that $\langle\Adj_g(x),\Adj_g(y)\rangle=\langle x,y\rangle$ for all $g\in G$ and $x,y\in\mathfrak{g}$. Furthermore, the Killing form is non-degenerate both on $\mathfrak{g}$ and when restricted to a bilinear form on $\mathfrak{t}$. This non-degeneracy gives rise to vector space isomorphisms $\mathfrak{g}\cong\mathfrak{g}^*$ and $\mathfrak{t}\cong\mathfrak{t}^*$, under which the Killing form corresponds to bilinear forms on $\mathfrak{g}^*$ and $\mathfrak{t}^*$, respectively. In an abuse of notation, we will also use $\langle\cdot,\cdot\rangle$ to denote these forms. Each simple root $\alpha\in\Pi$ then determines a \textit{simple coroot} $h_{\alpha}\in\mathfrak{t}$, defined by the following property:
$$\phi(h_{\alpha})=2\frac{\langle\alpha,\phi\rangle}{\langle\alpha,\alpha\rangle}$$
for all $\phi\in\mathfrak{t}^*$. One can check that $h_{\alpha}\in[\mathfrak{g}_{\alpha},\mathfrak{g}_{-\alpha}]$ and that the simple coroots form a basis of $\mathfrak{t}$.

Now let $W:=N_G(T)/T$ be the Weyl group and $s_{\alpha}\in W$ the reflection associated to $\alpha\in\Delta$, so that $W$ is generated by the simple reflections $s_{\alpha},\alpha\in\Pi$. Note that $W$ acts on $\mathfrak{t}$ by $w\cdot x=\Adj_g(x)$, $w\in W$, $x\in\mathfrak{t}$, where $g\in N_G(T)$ is any representative of $w$. There is an induced action on $\mathfrak{t}^*$ defined by $$(w\cdot\phi)(x)=\phi(w^{-1}\cdot x),\quad w\in W, \text{  }\phi\in\mathfrak{t}^*, \text{  }x\in\mathfrak{t},$$ 
under which the reflections act as follows:
$$s_{\alpha}\cdot\phi=\phi-2\frac{\langle\alpha,\phi\rangle}{\langle\alpha,\alpha\rangle}\alpha,\quad\alpha\in\Delta,\text{  }\phi\in\mathfrak{t}^*.$$
It is known that $X^*(T)$ and $\Delta$ are invariant under this $W$-action.

Now recall that a closed subgroup $P\subseteq G$ is called \textit{parabolic} if $P$ contains a conjugate of $B$. If $P$ contains $B$ itself, one calls it a \textit{standard parabolic} subgroup. To construct such a subgroup, consider a subset $S\subseteq\Pi$ and let $\Delta_{S}$ (resp. $\Delta_{S}^+$, $\Delta_{S}^{-}$) denote the collection of roots (resp. positive roots, negative roots) expressible as $\mathbb{Z}$-linear combinations of the elements of $S$. It follows that \begin{equation}\label{Equation: Lie algebra of standard parabolic}\mathfrak{p}_S:=\mathfrak{b}\oplus\bigoplus_{\alpha\in\Delta_{S}^{-}}\mathfrak{g}_{\alpha}\end{equation}
is a Lie subalgebra of $\mathfrak{g}$, and we shall let $P_S\subseteq G$ denote the corresponding closed connected subgroup of $G$. Note that $P_S$ contains $B$, by construction. Moreover, it turns out that $S\mapsto P_S$ defines a bijective correspondence between the subsets of $\Pi$ and the standard parabolic subgroups of $G$. The inverse associates to a standard parabolic subgroup $P$ the subset $\Pi_P:=\{\alpha\in\Pi:\mathfrak{g}_{-\alpha}\subseteq\mathfrak{p}\}$, where $\mathfrak{p}$ is the Lie algebra of $P$.

We will later benefit from recalling a particular Levi decomposition of a standard parabolic subgroup $P_S$ (cf. \cite[Sect. 30.2]{Humphreys}). One has \begin{equation}\label{Equation: Levi decomposition of parabolic} P_S=U_S\rtimes L_S,\end{equation} where $U_S$ is the unipotent radical of $P_S$ and $L_S$ is a (reductive) Levi factor having Lie algebra 
\begin{equation}\label{Equation: Lie algebra of Levi factor}
\mathfrak{l}_S:=\mathfrak{t}\oplus\bigoplus_{\alpha\in\Delta_S}\mathfrak{g}_{\alpha}=\left(\bigoplus_{\alpha\in\Delta_S^{-}}\mathfrak{g}_{\alpha}\right)\oplus\mathfrak{t}\oplus\left(\bigoplus_{\alpha\in\Delta_S^+}\mathfrak{g}_{\alpha}\right).\end{equation}
The Lie algebra of $U_{S}$ is 
\begin{equation}\label{Equation: Lie algebra of unipotent radical}
\mathfrak{u}_{S}:=\bigoplus_{\alpha\in\Delta_{+}\setminus\Delta_{S}^+}\mathfrak{g}_{\alpha}.
\end{equation}

We now specialize some of the above Lie-theoretic generalities to the case of $G=\SL_n(\mathbb{C})$, a recurring example in this article.

\begin{example}\label{Example: The type A setup}
Let $G=\SL_n(\mathbb{C})$, whose Lie algebra is $\mathfrak{g}=\mathfrak{sl}_n(\mathbb{C})=\{x\in\mathfrak{gl}_n(\mathbb{C}):\trace(x)=0\}$ with Lie bracket the commutator of matrices. The adjoint representations are given by
\begin{equation}\label{Equation: Type A adjoint representation}
\Adj_g(x)=gxg^{-1},\quad\adj_x(y)=[x,y]=xy-yx
\end{equation}
for all $g\in\SL_n(\mathbb{C})$ and $x,y\in\mathfrak{sl}_n(\mathbb{C})$. Also, the Killing form $\langle\cdot,\cdot\rangle$ is given by
\begin{equation}\label{Equation: Type A Killing form}
\langle x,y\rangle=2n\trace(xy),\quad x,y\in\mathfrak{sl}_n(\mathbb{C}).
\end{equation}
One may take $T$ and $B$ to be the subgroups of diagonal and upper-triangular matrices in $\SL_n(\mathbb{C})$, respectively. It follows that $\mathfrak{t}$ and $\mathfrak{b}$ are the Lie subalgebras of diagonal and upper-triangular matrices in $\mathfrak{sl}_n(\mathbb{C})$, respectively. Now, for each $i\in\{1,\ldots,n\}$, let $t_i\in X^*(T)$ be the weight defined by
$$t_i:T\rightarrow\mathbb{C}^*,\quad\begin{bmatrix} a_1 & 0 & \ldots & 0\\ 0 & a_2 & \ldots & 0\\ \vdots & \vdots & \ddots & \vdots\ \\ 0 & 0 & \ldots & a_n\end{bmatrix}\mapsto a_i.$$ We then have $\Delta=\{t_i-t_j:1\leq i,j\leq n,i\neq j\}$, $\Delta_{+}=\{t_i-t_j:1\leq i<j\leq n\}$, $\Delta_{-}=\{t_i-t_j:1\leq j<i\leq n\},$ and $\Pi=\{t_i-t_{i+1}:1\leq i\leq n-1\}$. If $\alpha=t_i-t_j\in\Delta$, then $\mathfrak{sl}_n(\mathbb{C})_{\alpha}=\text{span}_{\mathbb{C}}\{E_{ij}\}$ where $E_{ij}\in\mathfrak{sl}_n(\mathbb{C})$ has an entry of $1$ in position $(i,j)$ and 0 elsewhere.

Recall that the Weyl group identifies with $S_n$ in such a way that $w\in S_n$ acts on $h\in\mathfrak{t}$ by permuting entries along the diagonal. 
\end{example}
  
\subsection{Semisimple and nilpotent elements}\label{Subsection: semisimple and nilpotent elements}
It will be prudent to recall some of the fundamentals concerning semisimple and nilpotent elements in $\mathfrak{g}$. We begin with the official, Lie-theoretic definitions of these terms.

\begin{definition}\label{Definition: semisimple and nilpotent}
We call a point $x\in\mathfrak{g}$ \textit{semisimple} (resp. \textit{nilpotent}) if $\adj_x:\mathfrak{g}\rightarrow\mathfrak{g}$ is semisimple (resp. nilpotent) as a vector space endomorphism.
\end{definition}

At first glance, there might appear to be other legitimate definitions. Suppose, for instance, that $\mathfrak{g}$ is explicitly presented as a subalgebra of $\mathfrak{gl}_n(\mathbb{C})$ for some $n$. Elements of $\mathfrak{g}$ are then $n\times n$ matrices, and one has the usual notions of ''semisimple" and ''nilpotent" for square matrices. Fortunately, these turn out to coincide with Definition \ref{Definition: semisimple and nilpotent} (see \cite[Cor. 20.4.3]{Tauvel}).

\begin{theorem}\label{Theorem: equivalence of semisimple / nilpotent}
If $\mathfrak{g}$ is a subalgebra of $\mathfrak{gl}_n(\mathbb{C})$, then $x\in\mathfrak{g}$ is semisimple (resp. nilpotent) in the sense of Definition \ref{Definition: semisimple and nilpotent} if and only if $x$ is semisimple (resp. nilpotent) as a matrix.
\end{theorem}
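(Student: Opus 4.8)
The plan is to deduce both equivalences from the compatibility of the matrix Jordan decomposition with the abstract (``$\adj$-theoretic'') one. Recall first that every $x\in\mathfrak{gl}_n(\mathbb{C})$ has a unique matrix Jordan decomposition $x=x_s+x_n$, with $x_s$ semisimple (diagonalizable), $x_n$ nilpotent, $[x_s,x_n]=0$, and $x_s,x_n$ polynomials in $x$ with zero constant term. Writing $\adj_x=L_x-R_x$ as the difference of the commuting operators of left and right multiplication by $x$ on $\mathfrak{gl}_n(\mathbb{C})$, and using that left/right multiplication by a semisimple (resp.\ nilpotent) matrix is a semisimple (resp.\ nilpotent) operator, one checks that $\adj_{x_s}$ is semisimple, $\adj_{x_n}$ is nilpotent, and $[\adj_{x_s},\adj_{x_n}]=\adj_{[x_s,x_n]}=0$. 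Uniqueness of the Jordan decomposition of the operator $\adj_x$ on $\mathfrak{gl}_n(\mathbb{C})$ then forces $(\adj_x)_s=\adj_{x_s}$ and $(\adj_x)_n=\adj_{x_n}$; in particular $\adj_{x_s}$ and $\adj_{x_n}$ are polynomials in $\adj_x$ with zero constant term. This already yields the ``if'' directions: if $x$ is a semisimple (resp.\ nilpotent) matrix then $\adj_x=\adj_{x_s}$ (resp.\ $\adj_x=\adj_{x_n}$) is a semisimple (resp.\ nilpotent) operator on $\mathfrak{gl}_n(\mathbb{C})$, hence on the invariant subspace $\mathfrak{g}$.

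The crucial point, and the step I expect to be the main obstacle, is to show that when $\mathfrak{g}\subseteq\mathfrak{gl}_n(\mathbb{C})$ is our fixed semisimple Lie algebra and $x\in\mathfrak{g}$, the components $x_s$ and $x_n$ already lie in $\mathfrak{g}$. I would argue as follows. Since $\mathfrak{g}$ is a subalgebra, $\adj_x$ maps $\mathfrak{g}$ into $\mathfrak{g}$; as $\adj_{x_s},\adj_{x_n}$ are polynomials in $\adj_x$ without constant term, they do likewise, so $x_s,x_n$ lie in the normalizer $\mathfrak{n}:=\{y\in\mathfrak{gl}_n(\mathbb{C}):[y,\mathfrak{g}]\subseteq\mathfrak{g}\}$. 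To cut $\mathfrak{n}$ down to $\mathfrak{g}$, intersect it with the subspaces $\mathfrak{s}_W:=\{y\in\mathfrak{gl}_n(\mathbb{C}):y(W)\subseteq W,\ \trace(y|_W)=0\}$ indexed by the $\mathfrak{g}$-submodules $W\subseteq\mathbb{C}^n$. One checks that $\mathfrak{g}\subseteq\mathfrak{s}_W$ (elements of $\mathfrak{g}=[\mathfrak{g},\mathfrak{g}]$ have zero trace on any submodule) and that $x_s,x_n\in\mathfrak{s}_W$ (they preserve $W$, being polynomials in $x$, and $\trace(x_s|_W)=\trace(x|_W)-\trace(x_n|_W)=\trace(x|_W)=0$). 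Hence $\mathfrak{g}'':=\mathfrak{n}\cap\bigcap_W\mathfrak{s}_W$ contains $\mathfrak{g}$, $x_s$, and $x_n$, and a routine Jacobi-identity computation shows it is a $\mathfrak{g}$-submodule of $\mathfrak{gl}_n(\mathbb{C})$ under $\adj$. By Weyl's complete reducibility theorem, $\mathfrak{g}''=\mathfrak{g}\oplus M$ for a $\mathfrak{g}$-submodule $M$; since $\mathfrak{g}''\subseteq\mathfrak{n}$ we get $[\mathfrak{g},M]\subseteq\mathfrak{g}\cap M=\{0\}$, so every $y\in M$ commutes with $\mathfrak{g}$. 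But every $y\in\mathfrak{g}''$ also preserves each $\mathfrak{g}$-submodule $W\subseteq\mathbb{C}^n$ with $\trace(y|_W)=0$; decomposing $\mathbb{C}^n$ into irreducible $\mathfrak{g}$-submodules (Weyl again) and applying Schur's lemma on each summand forces $y$ to act as a trace-zero scalar, hence as $0$, on each summand, so $y=0$. Thus $M=\{0\}$, $\mathfrak{g}''=\mathfrak{g}$, and $x_s,x_n\in\mathfrak{g}$.

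With this the ``only if'' directions fall out. Since $x_s,x_n\in\mathfrak{g}$, the operators $\adj_{x_s},\adj_{x_n}$ preserve $\mathfrak{g}$, and the repeat of the semisimple/nilpotent/commuting check from the first paragraph shows $\adj_x|_{\mathfrak{g}}=\adj_{x_s}|_{\mathfrak{g}}+\adj_{x_n}|_{\mathfrak{g}}$ is the Jordan decomposition of $\adj_x|_{\mathfrak{g}}$. Therefore $\adj_x$ is semisimple if and only if its nilpotent part $\adj_{x_n}|_{\mathfrak{g}}$ vanishes, i.e.\ $x_n$ lies in the centre of $\mathfrak{g}$; but $\mathfrak{g}$ is semisimple, so its centre is trivial, and this occurs precisely when $x_n=0$, i.e.\ when $x=x_s$ is a semisimple matrix. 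Dually, $\adj_x$ is nilpotent if and only if $\adj_{x_s}|_{\mathfrak{g}}=0$, i.e.\ $x_s\in Z(\mathfrak{g})=\{0\}$, i.e.\ $x=x_n$ is a nilpotent matrix. This establishes both equivalences.
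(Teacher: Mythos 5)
Your proof is correct. Note that the paper does not prove this statement at all: it simply cites \cite[Cor.~20.4.3]{Tauvel}, so there is no in-text argument to compare against. What you have written is the classical proof (essentially Humphreys, \emph{Introduction to Lie Algebras and Representation Theory}, \S 6.4, or Tauvel--Yu \S 20.4) that for a semisimple linear Lie algebra the abstract and matrix Jordan decompositions coincide: the computation $\adj_x=L_x-R_x$ handles the easy direction, and the normalizer-plus-trace-conditions argument with Weyl's theorem and Schur's lemma is exactly the standard device for showing $x_s,x_n\in\mathfrak{g}$, after which the two equivalences follow from uniqueness of the Jordan decomposition of $\adj_x|_{\mathfrak{g}}$ and the triviality of $Z(\mathfrak{g})$. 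All the individual steps check out, including the points where semisimplicity of $\mathfrak{g}$ is genuinely needed ($\mathfrak{g}=[\mathfrak{g},\mathfrak{g}]$ for the trace conditions, complete reducibility for the splitting $\mathfrak{g}''=\mathfrak{g}\oplus M$ and for decomposing $\mathbb{C}^n$, and $Z(\mathfrak{g})=\{0\}$ at the end), so your write-up is a legitimate self-contained replacement for the citation.
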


\begin{example}\label{Example: sl2 triple}
Suppose that $G=\SL_2(\mathbb{C})$, whose Lie algebra $\mathfrak{g}=\mathfrak{sl}_2(\mathbb{C})$ has the usual generators \begin{equation}\label{Equation: Usual sl2 generators}X=\begin{bmatrix}0 & 1\\ 0 & 0\end{bmatrix}, \quad H=\begin{bmatrix}1 & 0\\ 0 & -1\end{bmatrix},\quad Y=\begin{bmatrix}0 & 0\\ 1 & 0\end{bmatrix}.\end{equation} By Theorem \ref{Theorem: equivalence of semisimple / nilpotent}, it is immediate that $H$ is a semisimple element of $\mathfrak{sl}_2(\mathbb{C})$, while $X$ and $Y$ are nilpotent elements. The algebra $\mathfrak{sl}_2(\mathbb{C})$ and triple of elements $(X,H,Y)$ will play a substantial role in \ref{Subsection: The Jacobson-Morozov Theorem and sl2-triples}. 
\end{example}

The relationship between semisimple (resp. nilpotent) elements of $\mathfrak{g}$ and semisimple (resp. nilpotent) matrices extends beyond Theorem \ref{Theorem: equivalence of semisimple / nilpotent}. For instance, recall that if an $n\times n$ matrix is both semisimple and nilpotent, it is necessarily zero. At the same time, if $x\in\mathfrak{g}$ is both semisimple and nilpotent, so too is the endomorphism $\text{ad}_x:\mathfrak{g}\rightarrow\mathfrak{g}$. It follows that $\adj_x=0$, and the injectivity of $\text{ad}:\mathfrak{g}\rightarrow\mathfrak{gl}(\mathfrak{g})$ implies $x=0$. For a second comparison, note that an $n\times n$ matrix is semisimple (resp. nilpotent) if and only if it is conjugate to a diagonal (resp. strictly upper-triangular) matrix. Dually, $x\in\mathfrak{g}$ is semisimple (resp. nilpotent) if and only if there exists $g\in G$ satisfying $\Adj_g(x)\in\mathfrak{t}$ (resp. $\Adj_g(x)\in\bigoplus_{\alpha\in\Delta_{+}}\mathfrak{g}_{\alpha}$). Thirdly, one knows that every $n\times n$ matrix $A$ gives rise to unique $n\times n$ matrices $S$ and $N$ such that $S$ is semisimple, $N$ is nilpotent, $A=S+N$, and $SN-NS=0$. The analogous fact is that for every $x\in\mathfrak{g}$, there exist unique elements $x_s,x_n\in\mathfrak{g}$ for which $x_s$ is semisimple, $x_n$ is nilpotent, $x=x_s+x_n$, and $[x_s,x_n]=0$. The expression $x=x_s+x_n$ is called the \textit{Jordan decomposition} of $x$, while $x_s$ and $x_n$ are called the semisimple and nilpotent parts of $x$, respectively.     

Let us turn to a more geometric discussion. Denote by $\mathfrak{g}_{\text{ss}}$ and $\mathcal{N}$ the subsets of semisimple and nilpotent elements of $\mathfrak{g}$, respectively. The latter is called the \textit{nilpotent cone}, in reference to the fact that $\mathcal{N}$ is invariant under the dilation action of $\mathbb{C}^*$ on $\mathfrak{g}$. In the interest of preparing for later sections of this article, let us mention a few preliminary facts concerning $\mathfrak{g}_{ss}$ and $\mathcal{N}$. Firstly, since the semisimple endomorphisms are open and dense in $\mathfrak{gl}(\mathfrak{g})$, it follows that $\mathfrak{g}_{\text{ss}}$ is an open dense subvariety of $\mathfrak{g}$. One analogously sees that $\mathcal{N}$ is a closed subvariety of $\mathfrak{g}$. Secondly, noting that $\Adj_g$ is a Lie algebra automorphism for each $g\in G$, we conclude that $x\in\mathfrak{g}$ is semisimple (resp. nilpotent) if and only if $\Adj_g(x)$ is semisimple (resp. nilpotent). This is equivalent to the observation that both $\mathfrak{g}_{\text{ss}}$ and $\mathcal{N}$ are invariant under the adjoint representation.

\begin{example}
Let $G=\SL_2(\mathbb{C})$. By Theorem \ref{Theorem: equivalence of semisimple / nilpotent}, $A\in\mathfrak{sl}_2(\mathbb{C})$ is nilpotent if and only if it is nilpotent as a matrix. Equivalently, the characteristic polynomial $\det(\lambda I_2-A)$ is $\lambda^2$, which is the case if and only if $\det(A)=0$ (the condition $\trace(A)=0$ being automatic). In other words, we have 
$$\mathcal{N}=\left\{\begin{bmatrix} x & y \\ z & -x\end{bmatrix}\in\mathfrak{sl}_2(\mathbb{C}):x^2+yz=0\right\}.$$
\end{example}  
     
\subsection{The Jacobson-Morozov Theorem and $\mathfrak{sl}_2(\mathbb{C})$-triples}\label{Subsection: The Jacobson-Morozov Theorem and sl2-triples}
Let $(X,H,Y)$ be the triple of matrices in $\mathfrak{sl}_2(\mathbb{C})$ from Example \ref{Example: sl2 triple}. These matrices form a $\mathbb{C}$-basis of $\mathfrak{sl}_2(\mathbb{C})$ and satisfy the following bracket relations:
\begin{equation}\label{Equation: sl2 relations}
[X,Y]=H,\quad [H,X]=2X, \quad [H,Y]=-2Y.
\end{equation}
More generally, one calls a triple $(x,h,y)$ of points in $\mathfrak{g}$ an $\mathfrak{sl}_2(\mathbb{C})$-\textit{triple} if \eqref{Equation: sl2 relations} holds in $\mathfrak{g}$ when $X$, $H$, and $Y$ are replaced with $x$, $h$, and $y$, respectively. Such triples are in bijective correspondence with the Lie algebra morphisms $\phi:\mathfrak{sl}_2(\mathbb{C})\rightarrow\mathfrak{g}$, where $\phi\in\Hom(\mathfrak{sl}_2(\mathbb{C}),\mathfrak{g})$ identifies with $(\phi(X),\phi(H),\phi(Y))$. Since any $\phi\in\Hom(\mathfrak{sl}_2(\mathbb{C}),\mathfrak{g})$ is either zero or injective (a consequence of $\mathfrak{sl}_2(\mathbb{C})$ having no non-trivial ideals), there are exactly two possibilities for an $\mathfrak{sl}_2(\mathbb{C})$-triple $(x,h,y)$. Either $x=h=y=0$ or $\text{span}_{\mathbb{C}}\{x,h,y\}$ is a subalgebra of $\mathfrak{g}$ isomorphic to $\mathfrak{sl}_2(\mathbb{C})$. 

\begin{example}\label{Example: Triple}
Let $\alpha\in\Pi$ be a simple root and $h_{\alpha}\in[\mathfrak{g}_{\alpha},\mathfrak{g}_{-\alpha}]$ the corresponding simple coroot (as defined in \ref{Subsection: The basic objects}). If $e_{\alpha}\in\mathfrak{g}_{\alpha}$ and $e_{-\alpha}\in\mathfrak{g}_{-\alpha}$ are chosen so that $h_{\alpha}=[e_{\alpha},e_{-\alpha}]$, then one can show $(e_{\alpha},h_{\alpha},e_{-\alpha})$ to be an $\mathfrak{sl}_2(\mathbb{C})$-triple.
\end{example}

\begin{example}\label{Example: Kostant triple}
In the interest of preparing for discussions to come (particularly \ref{Subsection: The regular nilpotent orbit}), we now consider a more complicated example (cf. \cite[Lemma 5.2]{Kostant}). To this end, since $\Pi$ is a basis of $\mathfrak{t}^*$, we may define $\{\epsilon_{\alpha}\}_{\alpha\in\Pi}$ to be the basis of $\mathfrak{t}$ satisfying 
\begin{equation}\label{Equation: Dual basis}
\alpha(\epsilon_{\beta}) = 
\begin{cases} 
2 & \text{if } \alpha=\beta \\
0 & \text{if } \alpha\neq\beta
\end{cases}
\end{equation}
for all $\alpha,\beta\in\Pi$. Now consider the semisimple element 
\begin{equation}\label{Equation: Regular semisimple element}h:=\sum_{\alpha\in\Pi}\epsilon_{\alpha}\in\mathfrak{t}.\end{equation} Noting that $\{h_{\alpha}\}_{\alpha\in\Pi}$ is a basis of $\mathfrak{t}$ (see \ref{Subsection: The basic objects}), there exist unique coefficients $c_{\alpha}\in\mathbb{C}$, $\alpha\in\Pi$, such that
$$h=\sum_{\alpha\in\Pi}c_{\alpha}h_{\alpha}.$$ We now define $\xi,\eta\in\mathfrak{g}$ by 
$$\xi:=\sum_{\alpha\in\Pi}e_{\alpha}$$ and
$$\eta:=\sum_{\alpha\in\Pi}c_{\alpha}e_{-\alpha},$$ respectively, where for each $\alpha\in\Pi$, $e_{\alpha}\in\mathfrak{g}_{\alpha}$ and $e_{-\alpha}\in\mathfrak{g}_{-\alpha}$ are chosen such that $h_{\alpha}=[e_{\alpha},e_{-\alpha}]$ (as in Example \ref{Example: Triple}). Furthermore, we claim that $(\xi,h,\eta)$ is an $\mathfrak{sl}_2(\mathbb{C})$-triple. To see this, first note that \begin{equation}\label{Equation: Technical calculation}[\xi,\eta]=\sum_{\alpha,\beta\in\Pi}c_{\beta}[e_{\alpha},e_{-\beta}].\end{equation} If $\alpha,\beta\in\Pi$ are distinct, then the discussion of simple roots in \ref{Subsection: The basic objects} implies that $\alpha-\beta$ cannot be a root. Since $\alpha-\beta$ is also non-zero in this case, $[\mathfrak{g}_{\alpha},\mathfrak{g}_{-\beta}]=\{0\}$ (see \ref{Subsection: The basic objects}). In particular, $[e_{\alpha},e_{-\beta}]=0$ when $\alpha$ and $\beta$ are distinct. At the same time, we know that $[e_{\alpha},e_{-\alpha}]=h_{\alpha}$ for all $\alpha\in\Pi$. Hence \eqref{Equation: Technical calculation} can be written as $$[\xi,\eta]=\sum_{\alpha\in\Pi}c_{\alpha}h_{\alpha}=h.$$ Now computing $[h,\xi]$, we have
$$[h,\xi]=\sum_{\alpha\in\Pi}[h,e_{\alpha}]=\sum_{\alpha\in\Pi}\alpha(h)e_{\alpha}.$$ The description \eqref{Equation: Regular semisimple element} of $h$ implies that $\alpha(h)=2$ for all $\alpha\in\Pi$, so that
$$[h,\xi]=\sum_{\alpha\in\Pi}2e_{\alpha}=2\xi.$$ The verification of $[h,\eta]=-2\eta$ is similar, since $-\alpha(h)=-2$ for all $\alpha\in\Pi$. It follows that $(\xi,h,\eta)$ is indeed an $\mathfrak{sl}_2(\mathbb{C})$-triple.  
\end{example}

We will benefit from understanding $\mathfrak{sl}_2(\mathbb{C})$-triples in the context of the \textit{Jacobson-Morozov Theorem}, a crucial result that will feature prominently in \ref{Subsection: Some first results}. To motivate this theorem, suppose that $(x,h,y)$ is the $\mathfrak{sl}_2(\mathbb{C})$-triple corresponding to a non-zero (ie. injective) $\phi\in\Hom(\mathfrak{sl}_2(\mathbb{C}),\mathfrak{g})$. It follows that $\phi$ defines an isomorphism with its image $\text{span}_{\mathbb{C}}\{x,h,y\}$, so that $\phi$ sends nilpotent elements of $\mathfrak{sl}_2(\mathbb{C})$ to nilpotent elements of $\text{span}_{\mathbb{C}}\{x,h,y\}$. In particular, $x=\phi(X)$ is nilpotent in $\text{span}_{\mathbb{C}}\{x,h,y\}$. If one chooses a Lie algebra embedding $\mathfrak{g}\subseteq\mathfrak{gl}_n(\mathbb{C})$\footnote{Since $\mathfrak{g}$ is semisimple, such an embedding always exists. For instance, one may identify $\mathfrak{g}$ with its image under the (injective) adjoint representation $\adj:\mathfrak{g}\rightarrow\mathfrak{gl}(\mathfrak{g})\cong\mathfrak{gl}_n(\mathbb{C})$, $n=\dim(\mathfrak{g})$.}, then $\text{span}_{\mathbb{C}}\{x,h,y\}$ becomes a subalgebra of $\mathfrak{gl}_n(\mathbb{C})$ and Theorem \ref{Theorem: equivalence of semisimple / nilpotent} implies that $x$ is nilpotent matrix. By applying Theorem \ref{Theorem: equivalence of semisimple / nilpotent} again, this time to the subalgebra $\mathfrak{g}\subseteq\mathfrak{gl}_n(\mathbb{C})$, we see that $x$ is nilpotent in $\mathfrak{g}$. This is clearly also true when $\phi=0$ (in which case $x=0$), so we have shown $x\in\mathfrak{g}$ to be nilpotent whenever it appears as the first element in an $\mathfrak{sl}_2(\mathbb{C})$-triple. For a converse, one has the Jacobson-Morozov Theorem.

\begin{theorem}[The Jacobson-Morozov Theorem; \text{cf. \cite[Thm. 3.7.1]{Chriss}}]
If $x\in\mathfrak{g}$ is nilpotent, then there exist $h,y\in\mathfrak{g}$ such that $(x,h,y)$ is an $\mathfrak{sl}_2(\mathbb{C})$-triple.
\end{theorem}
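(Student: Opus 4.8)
The plan is to construct, for the given nilpotent element $e:=x$, companions $h,y\in\mathfrak{g}$ with $[h,e]=2e$, $[e,y]=h$, and $[h,y]=-2y$; granting these, $(e,h,y)$ is an $\mathfrak{sl}_2(\mathbb{C})$-triple, and $\text{span}_{\mathbb{C}}\{e,h,y\}\cong\mathfrak{sl}_2(\mathbb{C})$ whenever $e\neq 0$, the case $e=0$ being trivial. One remark before starting: such an $h$ is automatically a semisimple element of $\mathfrak{g}$, since viewing $\mathfrak{g}$ as a module over $\text{span}_{\mathbb{C}}\{e,h,y\}\cong\mathfrak{sl}_2(\mathbb{C})$ via $\adj$ and invoking Weyl's complete reducibility theorem makes $\adj_h$ diagonalizable. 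Thus all the content lies in producing the bracket relations.

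\textbf{Step 1 (a grading element).} I would first show $e\in[\mathfrak{g},e]=\im(\adj_e)$ using Killing-form duality. By $G$-invariance and non-degeneracy of $\langle\cdot,\cdot\rangle$, one has $[\mathfrak{g},e]^{\perp}=\{z\in\mathfrak{g}:[e,z]=0\}=C_{\mathfrak{g}}(e)$, so it suffices that $\langle e,z\rangle=0$ for all $z\in C_{\mathfrak{g}}(e)$. For such $z$, the operator $\adj_z$ commutes with the nilpotent operator $\adj_e$, hence $\adj_e\circ\adj_z$ is nilpotent and $\langle e,z\rangle=\trace(\adj_e\circ\adj_z)=0$. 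Thus $e\in[\mathfrak{g},e]$; a small refinement, namely $e\in\im(\adj_e^2)$, then lets one choose $h$ with $[h,e]=2e$ and $h\in\im(\adj_e)$ — this precision matters, since not every solution of $[h,e]=2e$ extends to a triple.

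\textbf{Step 2 (weight bookkeeping and a Jacobi correction).} Fix such an $h$ and decompose $\mathfrak{g}=\bigoplus_j\mathfrak{g}_j$ into generalized eigenspaces of $\adj_h$ (a priori $h$ is not yet known to be semisimple). From $[h,e]=2e$ one gets $e\in\mathfrak{g}_2$ and $[\mathfrak{g}_i,\mathfrak{g}_j]\subseteq\mathfrak{g}_{i+j}$, so $\adj_e$ maps $\mathfrak{g}_j$ into $\mathfrak{g}_{j+2}$. Since $h\in\im(\adj_e)\cap\mathfrak{g}_0$, one may pick $y_0\in\mathfrak{g}_{-2}$ with $[e,y_0]=h$; the Jacobi identity then forces $u:=[h,y_0]+2y_0\in C_{\mathfrak{g}}(e)$, a subspace preserved by $\adj_h$. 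If $\adj_h+2\,\mathrm{id}$ is invertible on $C_{\mathfrak{g}}(e)$, solve $(\adj_h+2)v=u$ for $v\in C_{\mathfrak{g}}(e)$ and set $y:=y_0-v$; a direct computation then gives $[e,y]=h$ and $[h,y]=-2y$, as required.

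\textbf{The main obstacle.} The crux is justifying the invertibility used in Step 2 — that $-2$ is not an eigenvalue of $\adj_h$ on $C_{\mathfrak{g}}(e)$, indeed that all its eigenvalues there are non-negative integers — together with the companion statement that $\adj_e\colon\mathfrak{g}_j\to\mathfrak{g}_{j+2}$ is injective for $j\leq-1$ (which is also what secures the existence of $y_0\in\mathfrak{g}_{-2}$ above). Once a genuine triple is in hand these facts are immediate from $\mathfrak{sl}_2(\mathbb{C})$-representation theory — $C_{\mathfrak{g}}(e)$ is then a sum of highest-weight lines, whose weights are non-negative integers — but a priori they must be bootstrapped. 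I would do this by induction on $\dim\mathfrak{g}$: if $e$ lies in a proper semisimple subalgebra, invoke the inductive hypothesis there; otherwise $\adj_e$ and $\adj_h$ already control enough of the structure of $\mathfrak{g}$ to force the bound directly. (Alternatively, one may follow the argument of \cite[Thm. 3.7.1]{Chriss}.) Everything outside this point is linear algebra with the Killing form and the Jacobi identity.
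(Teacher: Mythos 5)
The paper itself gives no proof of this theorem --- it is quoted from \cite[Thm.~3.7.1]{Chriss} --- so your attempt must stand on its own. Its skeleton is the standard one (Morozov's lemma: given $h\in\im(\adj_e)$ with $[h,e]=2e$, correct a preliminary $y_0$ by an element of $C_{\mathfrak{g}}(e)$), and the steps you actually carry out are correct: $[\mathfrak{g},e]^{\perp}=C_{\mathfrak{g}}(e)$ by invariance and non-degeneracy of the Killing form, $\adj_e\circ\adj_z$ is nilpotent for $z\in C_{\mathfrak{g}}(e)$, the Jacobi computation showing $u=[h,y_0]+2y_0\in C_{\mathfrak{g}}(e)$, and the final substitution $y=y_0-v$. (One small inaccuracy: the existence of $y_0\in\mathfrak{g}_{-2}$ with $[e,y_0]=h$ needs only that $\im(\adj_e)$ is graded and that $\adj_e$ raises degree by $2$; no injectivity statement is required there.) The problem is that the two load-bearing claims are exactly the ones you do not prove. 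First, $e\in\im(\adj_e^2)$ is not a ``small refinement'' of your trace argument: that argument gives $\langle e,z\rangle=0$ for $z\in\ker(\adj_e)=\im(\adj_e)^{\perp}$ precisely because $\adj_e\circ\adj_z$ is then nilpotent, whereas for $z\in\ker(\adj_e^2)=\im(\adj_e^2)^{\perp}$ the operator $\adj_e\circ\adj_z$ need not be nilpotent, and a genuinely new argument is needed. (One workable route: complete the nilpotent operator $\adj_e$ to an $\mathfrak{sl}_2(\mathbb{C})$-triple inside $\mathfrak{gl}(\mathfrak{g})$ by elementary Jordan-form linear algebra, grade $\mathfrak{gl}(\mathfrak{g})$ by the resulting semisimple operator, note that $\adj_z$ then lies in degrees $\geq -1$ while $\adj_e$ has degree $2$, and conclude that the trace pairing vanishes.)

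Second, and more seriously, the invertibility of $\adj_h+2$ on $C_{\mathfrak{g}}(e)$ --- equivalently, that $-2$ is not an eigenvalue of $\adj_h$ there --- is the actual content of Morozov's lemma, and the induction you sketch does not deliver it. As you set it up, if $e$ lies in a proper semisimple subalgebra then the inductive hypothesis already hands you a complete $\mathfrak{sl}_2(\mathbb{C})$-triple and the eigenvalue claim becomes unnecessary; the whole difficulty is the complementary case, for which ``$\adj_e$ and $\adj_h$ already control enough of the structure of $\mathfrak{g}$ to force the bound directly'' is a placeholder rather than an argument. The classical repair avoids induction entirely: writing $h=[e,z]$ and taking an eigenvector $c\in C_{\mathfrak{g}}(e)$ of $\adj_h$ with eigenvalue $\lambda$, one produces a finite-dimensional subspace spanned by $c,\adj_z c,\adj_z^2 c,\dots$ that is stable under $\adj_e$, $\adj_h$, and $\adj_z$, and extracts $\lambda\in\mathbb{Z}_{\geq 0}$ from the vanishing of $\trace(\adj_h)=\trace([\adj_e,\adj_z])$ on it. Without this (or an honest, fully specified induction, or the fallback citation), the proof is incomplete at its crux.
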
  

\section{General adjoint orbits}\label{Section: General adjoint orbits}
Equipped with Section \ref{Section: Preliminaries on Lie theory}, we can apply parts of Section \ref{Section: Preliminaries on algebraic group actions} to study a distinguished class of algebraic group orbits -- adjoint orbits. Our overarching objective is to give the background and context necessary for the more specialized discussions of semisimple and nilpotent adjoint orbits (Sections \ref{Section: Semisimple orbits} and \ref{Section: Nilpotent orbits}, respectively). Nevertheless, we will see that general adjoint orbits are interesting objects of study in Lie theory and geometry (see \ref{Subsection: Geometric features}, for example).     

\subsection{Definitions and conventions}\label{Subsection: Definitions and conventions} 
Following the framework of Example \ref{Example: action from representation}, the adjoint representation $\Adj:G\rightarrow\GL(\mathfrak{g})$ induces a $G$-variety structure on $\mathfrak{g}$. One calls the resulting action the \textit{adjoint action} and its orbits \textit{adjoint orbits}. 

\begin{example}\label{Example: Type A adjoint orbits}
Let $G=\SL_n(\mathbb{C})$. Referring to \eqref{Equation: Type A adjoint representation}, we see that the adjoint orbits of $\SL_n(\mathbb{C})$ are precisely the conjugacy classes of the traceless $n\times n$ matrices. Note that the term ``conjugacy class'' is unambiguous, since two $n\times n$ matrices are conjugate under $\SL_n(\mathbb{C})$ if and only if they are conjugate under $\GL_n(\mathbb{C})$. With this point in mind, Example \ref{Example: Conjugation} describes several adjoint $\SL_n(\mathbb{C})$-orbits.\footnote{To ensure that $\mathcal{O}(A)$ from Example \ref{Example: Conjugation} is an adjoint $\SL_n(\mathbb{C})$-orbit, one must impose the extra condition $\trace(A)=0$.}
\end{example}

Now, recall the discussion of stabilizers from \ref{Subsection: Orbits as quotient varieties}. Given $x\in\mathfrak{g}$, we shall always understand $C_G(x)$ as referring to the $G$-stabilizer of $x$ under the adjoint action, ie. $$C_G(x):=\{g\in G:\Adj_g(x)=x\},\quad x\in\mathfrak{g}.$$ The following fact about its Lie algebra, $C_{\mathfrak{g}}(x)\subseteq\mathfrak{g}$, will be used repeatedly and (in the interest of parsimony) without explicit indication that it is being used.  

\begin{lemma}[\text{cf. \cite[Sect. 2.2]{Jantzen}}]
If $x\in\mathfrak{g}$, then $C_{\mathfrak{g}}(x)=\{y\in\mathfrak{g}:[x,y]=0\}$.
\end{lemma}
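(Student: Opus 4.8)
The plan is to identify the Lie algebra $C_{\mathfrak{g}}(x)$ of the stabilizer $C_G(x)$ by differentiating the defining condition $\Adj_g(x) = x$. Recall that for any algebraic group $G$ acting on a variety, the Lie algebra of the stabilizer of a point is the kernel of the differential of the orbit map at the identity; equivalently, it consists of those $y \in \mathfrak{g}$ whose associated fundamental vector field vanishes at $x$. For the adjoint action, the orbit map is $G \to \mathfrak{g}$, $g \mapsto \Adj_g(x)$, and its differential at $e$ sends $y \in \mathfrak{g}$ to $\frac{d}{dt}\big|_{t=0}\Adj_{\exp(ty)}(x) = \adj_y(x) = [y,x]$. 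Hence $C_{\mathfrak{g}}(x)$ is the set of $y$ with $[y,x] = 0$, which by antisymmetry of the bracket is exactly $\{y \in \mathfrak{g} : [x,y] = 0\}$.

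Concretely, I would first invoke the general fact recalled in \ref{Subsection: Orbits as quotient varieties}: the differential of $G \to G/C_G(x)$ at $e$ has kernel the Lie algebra of $C_G(x)$, and one has the identification $T_x\mathcal{O}_G(x) \cong \mathfrak{g}/C_{\mathfrak{g}}(x)$ arising from the orbit map. Thus it suffices to compute the kernel of the differential $d_e(\psi_x): \mathfrak{g} \to \mathfrak{g}$ of the orbit map $\psi_x: g \mapsto \Adj_g(x)$. Second, I would compute this differential explicitly using the exponential map: for $y \in \mathfrak{g}$, the curve $t \mapsto \exp(ty)$ has velocity $y$ at $t = 0$, and $\Adj_{\exp(ty)}(x)$ has velocity $\adj_y(x) = [y,x]$ at $t = 0$, by the standard relation between $\Adj$ and $\adj$ (namely $\Adj_{\exp(ty)} = \exp(t\,\adj_y)$). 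Third, I would conclude that $y \in C_{\mathfrak{g}}(x)$ if and only if $[y,x] = 0$, i.e. $[x,y] = 0$.

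There is no real obstacle here; the only point requiring any care is making sure one is entitled to describe the Lie algebra of a stabilizer subgroup as the kernel of the differential of the orbit map — this is standard for algebraic group actions and is already implicitly used in \ref{Subsection: Orbits as quotient varieties} (where $C_{\mathfrak{g}}(x)$ is defined as the Lie algebra of $C_G(x)$ and the surjection $\mathfrak{g} \to T_{[e]}(G/C_G(x))$ with kernel $C_{\mathfrak{g}}(x)$ is noted). So the substantive content is just the computation of $d_e(\psi_x)(y) = [y,x]$, which is immediate from the definition of the adjoint representations of $G$ and $\mathfrak{g}$. Alternatively, and perhaps even more cleanly, one can argue directly: $y \in C_{\mathfrak{g}}(x)$ means $\exp(ty) \in C_G(x)$ for all $t$ in a neighborhood of $0$, i.e. $\Adj_{\exp(ty)}(x) = x$ for all such $t$; differentiating at $t = 0$ gives $[y,x] = 0$, and conversely if $[y,x] = 0$ then $\adj_y$ annihilates $x$, so $\Adj_{\exp(ty)}(x) = \exp(t\,\adj_y)(x) = x$ for all $t$, whence $\exp(ty) \in C_G(x)$ and $y \in C_{\mathfrak{g}}(x)$.
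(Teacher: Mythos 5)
Your proof is correct; note that the paper itself gives no proof of this lemma at all --- it is stated with only a citation to Jantzen --- so there is no argument of the paper's to compare against. Your computation, identifying $C_{\mathfrak{g}}(x)$ with the kernel of the differential at $e$ of the orbit map $g\mapsto\Adj_g(x)$ and evaluating that differential as $y\mapsto\adj_y(x)=[y,x]$, is the standard one, and the single point of genuine care --- that the Lie algebra of the stabilizer really is the full kernel of this differential, which holds here because we are over $\mathbb{C}$ but can fail for non-reduced stabilizer schemes in positive characteristic --- is exactly the point you flag.
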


\subsection{Dimension and regularity}\label{Subsection: Dimension and regularity}
In the interest of later sections, we now take a moment to discuss the dimensions of adjoint orbits. One might begin with the observation that each adjoint orbit can have dimension at most $\dim(G)$. However, this dimension bound turns out to be highly sub-optimal. To improve it, we note that $\dim(C_G(x))\geq\text{rank}(G)$ for all $x\in\mathfrak{g}$ (see \cite[Sect. 19.7, Sect. 29.3]{Tauvel}\footnote{Strictly speaking, this reference bounds the dimension of $C_{\mathfrak{g}}(x)$ instead of $C_G(x)$. This is, of course, equivalent to the dimension inequality stated here.}). It follows that
$$\dim(\mathcal{O}(x))=\dim(G/C_G(x))=\dim(G)-\dim(C_G(x))\leq\dim(G)-\text{rank}(G)$$         
for all $x\in\mathfrak{g}$, so that $\dim(G)-\text{rank}(G)$ is our new upper bound on adjoint orbit dimensions. This bound is sharp in the sense that $\mathfrak{g}$ always contains adjoint orbits of dimension $\dim(G)-\text{rank}(G)$. We defer the proof of this fact to \ref{Subsection: Stabilizer descriptions} (see Corollary \ref{Corollary: The existence of regular orbits}), which is completely independent of the present section.

One calls an adjoint orbit $\mathcal{O}\subseteq\mathfrak{g}$ \textit{regular} if $\dim(\mathcal{O})=\dim(G)-\text{rank}(G)$.     

\subsection{Some first results}\label{Subsection: Some first results}
We now gather some initial results on adjoint orbits, emphasizing connections to semisimple/nilpotent elements and the Jordan decomposition (discussed in \ref{Subsection: semisimple and nilpotent elements}). The facts we establish here will be essential to Sections \ref{Section: Semisimple orbits} and \ref{Section: Nilpotent orbits}.

\begin{proposition}\label{Proposition: Dilating the nilpotent part}
	If $\mathcal{O}\subseteq\mathfrak{g}$ is an adjoint orbit and $x\in\mathcal{O}$ has Jordan decomposition $x=x_s+x_n$, then $x_s+ax_n\in\mathcal{O}$ for all $a\in\mathbb{C}^*$.
\end{proposition}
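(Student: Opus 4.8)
\textit{Proof proposal.}

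The plan is to exhibit, for each $a\in\mathbb{C}^*$, an element $g\in G$ with $\Adj_g(x_s)=x_s$ and $\Adj_g(x_n)=ax_n$; then $\Adj_g(x)=x_s+ax_n$ lies in $\mathcal{O}(x)=\mathcal{O}$. Such a $g$ will be of the form $\exp(ch)$, where $c\in\mathbb{C}$ and $h\in\mathfrak{g}$ satisfies $[h,x_s]=0$ and $[h,x_n]=2x_n$: indeed $\Adj_{\exp(ch)}=e^{c\,\adj_h}$, and the two bracket conditions say that $x_s$ is fixed by $\adj_h$ while $x_n$ is an $\adj_h$-eigenvector of eigenvalue $2$, so $\Adj_{\exp(ch)}(x_s)=x_s$ and $\Adj_{\exp(ch)}(x_n)=e^{2c}x_n$; one then takes $c$ with $e^{2c}=a$. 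Thus the whole problem reduces to producing an $\mathfrak{sl}_2(\mathbb{C})$-triple $(x_n,h,y)$ lying entirely inside $C_{\mathfrak{g}}(x_s)$. (The case $x_n=0$ is trivial, so we may assume $x_n\neq 0$.)

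To find this triple, first apply $\Adj_{g_0}$ for a $g_0\in G$ with $\Adj_{g_0}(x_s)\in\mathfrak{t}$, which exists by the characterization of semisimple elements in \ref{Subsection: semisimple and nilpotent elements}. Since $\Adj_{g_0}$ is a Lie algebra automorphism it preserves Jordan decompositions and fixes $\mathcal{O}$, so it is harmless to assume from the start that $x_s\in\mathfrak{t}$. Then the root space decomposition \eqref{Equation: Root space decomposition} gives $\mathfrak{l}:=C_{\mathfrak{g}}(x_s)=\mathfrak{t}\oplus\bigoplus_{\alpha(x_s)=0}\mathfrak{g}_{\alpha}$, which is a reductive subalgebra with Cartan subalgebra $\mathfrak{t}$ (its structure is the standard one for Levi subalgebras, cf. \ref{Subsection: The basic objects}). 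Write $\mathfrak{l}=\mathfrak{z}\oplus\mathfrak{l}'$ with $\mathfrak{z}=Z(\mathfrak{l})$ and $\mathfrak{l}'=[\mathfrak{l},\mathfrak{l}]$ semisimple; since $\mathfrak{z}$ centralizes $\mathfrak{t}\subseteq\mathfrak{l}$ we get $\mathfrak{z}\subseteq C_{\mathfrak{g}}(\mathfrak{t})=\mathfrak{g}_0=\mathfrak{t}$, so every element of $\mathfrak{z}$ is semisimple in $\mathfrak{g}$.

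The crux is to show $x_n\in\mathfrak{l}'$. Write $x_n=u+v$ with $u\in\mathfrak{z}$ and $v\in\mathfrak{l}'$; these commute, and $u$ is semisimple in $\mathfrak{g}$. Moreover $v$ is nilpotent in $\mathfrak{g}$: the nilpotent operator $\adj_{x_n}$ restricts to the invariant subspace $\mathfrak{l}$, where it equals $\adj_v$ (as $u$ is central in $\mathfrak{l}$), and restricting further to the invariant subspace $\mathfrak{l}'$ shows $v$ is nilpotent in the semisimple algebra $\mathfrak{l}'$; hence, by \ref{Subsection: semisimple and nilpotent elements} applied inside $\mathfrak{l}'$ (choosing a positive system for the roots of $\mathfrak{l}'$ contained in $\Delta_{+}$), $v$ is $\Adj$-conjugate into $\bigoplus_{\alpha\in\Delta_{+}}\mathfrak{g}_{\alpha}$ and so is nilpotent in $\mathfrak{g}$. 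Thus $x_n=u+v$ is a sum of commuting semisimple and nilpotent elements of $\mathfrak{g}$, so by uniqueness of the Jordan decomposition (and $x_n$ being nilpotent) we must have $u=0$, i.e. $x_n=v\in\mathfrak{l}'$. Now the Jacobson--Morozov theorem applied to the semisimple Lie algebra $\mathfrak{l}'$ supplies $h,y\in\mathfrak{l}'\subseteq C_{\mathfrak{g}}(x_s)$ with $(x_n,h,y)$ an $\mathfrak{sl}_2(\mathbb{C})$-triple, which is precisely the input required by the first paragraph.

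The main obstacle is this last claim, $x_n\in[C_{\mathfrak{g}}(x_s),C_{\mathfrak{g}}(x_s)]$, equivalently the assertion that a Jacobson--Morozov triple through $x_n$ can be chosen to centralize $x_s$; it is here that one must combine the reductive structure of $C_{\mathfrak{g}}(x_s)$ with the nilpotency of $x_n$ in the ambient algebra. Everything else is bookkeeping: the reduction to $x_s\in\mathfrak{t}$, the standard description of the reductive centralizer, the $\Adj$-invariance of nilpotency, and the eigenvalue computation converting $h$ into the desired $g=\exp(ch)$.
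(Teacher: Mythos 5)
Your proof is correct and follows essentially the same route as the paper's: produce $h\in C_{\mathfrak{g}}(x_s)$ with $[h,x_n]=2x_n$ by applying Jacobson--Morozov inside the semisimple derived subalgebra of the reductive centralizer $C_{\mathfrak{g}}(x_s)$, then act by $\exp(ch)$ with $e^{2c}=a$. The only real difference is that where the paper simply cites Tauvel for the containment $x_n\in[C_{\mathfrak{g}}(x_s),C_{\mathfrak{g}}(x_s)]$, you prove it directly (reduce to $x_s\in\mathfrak{t}$, split $x_n=u+v$ along $Z(C_{\mathfrak{g}}(x_s))\oplus[C_{\mathfrak{g}}(x_s),C_{\mathfrak{g}}(x_s)]$, and kill $u$ by uniqueness of the Jordan decomposition), which is a correct, self-contained substitute for that citation.
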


\begin{proof}
	Since $[x_s,x_n]=0$, it follows that $x_n\in C_{\mathfrak{g}}(x_s)$. However, $C_{\mathfrak{g}}(x_s)$ is known to be a reductive subalgebra of $\mathfrak{g}$ (see \cite[Prop. 20.5.13]{Tauvel}), and one can then deduce that $x_n\in [C_{\mathfrak{g}}(x_s),C_{\mathfrak{g}}(x_s)]$ (see \cite[Prop. 20.5.14]{Tauvel}). Also, the fact that $[C_{\mathfrak{g}}(x_s),C_{\mathfrak{g}}(x_s)]$ is reductive implies that $[C_{\mathfrak{g}}(x_s),C_{\mathfrak{g}}(x_s)]$ is a semisimple subalgebra. The element $x_n$ is nilpotent in $[C_{\mathfrak{g}}(x_s),C_{\mathfrak{g}}(x_s)]$, as one can see by embedding $\mathfrak{g}$ (hence also $[C_{\mathfrak{g}}(x_s),C_{\mathfrak{g}}(x_s)]$) into $\mathfrak{gl}_n(\mathbb{C})$ and applying Theorem \ref{Theorem: equivalence of semisimple / nilpotent}. By the Jacobson-Morozov Theorem, there exists $h\in [C_{\mathfrak{g}}(x_s),C_{\mathfrak{g}}(x_s)]$ for which $[h,x_n]=2x_n$. Now for all $c\in\mathbb{C}$, we have
	\begin{equation}\label{Equation: First dilation invariance}\Adj_{\exp(ch)}(x)=\exp(\adj_{ch})(x)=\exp(\adj_{ch})(x_s)+\exp(\adj_{ch})(x_n)
	\end{equation}
	where $\exp(\adj_{ch})$ denotes the exponential of the endomorphism $\adj_{ch}:\mathfrak{g}\rightarrow\mathfrak{g}$. Observe that $ch\in C_{\mathfrak{g}}(x_s)$, so that $\exp(\adj_{ch})(x_s)=x_s$. Secondly, the condition $[ch,x_n]=2cx_n$ gives $\exp(\adj_{ch})(x_n)=e^{2c}x_n$. These two observations allow one to write \eqref{Equation: First dilation invariance} as
	$$\Adj_{\exp(ch)}(x)=x_s+e^{2c}x_n.$$
	In particular, $x_s+e^{2c}x_n\in\mathcal{O}$ for all $c\in\mathbb{C}$. To prove the proposition as stated, choose $c\in\mathbb{C}$ such that $e^{2c}=a$.  
\end{proof}

By letting $a\rightarrow 0$ in Proposition \ref{Proposition: Dilating the nilpotent part}, one obtains the following corollary.

\begin{corollary}\label{Corollary: Semisimple part in closure}
	If $\mathcal{O}\subseteq\mathfrak{g}$ is an adjoint orbit and $x\in\mathcal{O}$ has Jordan decomposition $x=x_s+x_n$, then $x_s\in\overline{\mathcal{O}}$.
\end{corollary}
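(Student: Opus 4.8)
The plan is to upgrade the one-parameter family of points supplied by Proposition~\ref{Proposition: Dilating the nilpotent part} to a morphism out of the affine line, and then to invoke the fact that $\overline{\mathcal{O}}$ is Zariski closed. First I would dispose of the degenerate case: if $x_n=0$, then $x=x_s\in\mathcal{O}\subseteq\overline{\mathcal{O}}$ and there is nothing to prove, so from now on assume $x_n\neq 0$. Consider the morphism of varieties
$$\varphi:\mathbb{C}\longrightarrow\mathfrak{g},\qquad a\longmapsto x_s+ax_n.$$
By Proposition~\ref{Proposition: Dilating the nilpotent part}, $\varphi(a)\in\mathcal{O}$ for every $a\in\mathbb{C}^{*}$, so that $\varphi(\mathbb{C}^{*})\subseteq\mathcal{O}\subseteq\overline{\mathcal{O}}$.

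I would then close the argument as follows. Since $\overline{\mathcal{O}}$ is a closed subvariety of $\mathfrak{g}$, its preimage $\varphi^{-1}(\overline{\mathcal{O}})$ is a closed subset of $\mathbb{C}$, and it contains $\mathbb{C}^{*}$ by the previous step. As $\mathbb{C}^{*}$ is (Zariski) dense in $\mathbb{C}$, this forces $\varphi^{-1}(\overline{\mathcal{O}})=\mathbb{C}$; in particular $0$ belongs to it, i.e. $x_s=\varphi(0)\in\overline{\mathcal{O}}$. This is precisely the statement obtained informally by ``letting $a\to 0$''. One could equally phrase the final step analytically: the points $x_s+\tfrac{1}{k}x_n$ lie in $\mathcal{O}$ and converge to $x_s$, so $x_s$ lies in the closure.

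There is essentially no obstacle here; the only two points deserving explicit mention are the trivial case $x_n=0$, handled at the outset, and the bookkeeping remark that $\overline{\mathcal{O}}$ is taken with respect to the Zariski topology, so that the density of $\mathbb{C}^{*}$ in the affine line is exactly what pushes $\varphi(0)$ into $\overline{\mathcal{O}}$. (If one wished to stay entirely in the analytic category, the same density observation, or sequential closedness as above, does the job.)
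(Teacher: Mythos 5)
Your proposal is correct and is exactly the paper's argument: the paper proves this corollary in one line by ``letting $a\rightarrow 0$'' in Proposition~\ref{Proposition: Dilating the nilpotent part}, and your morphism $\varphi(a)=x_s+ax_n$ together with the Zariski (or sequential) density of $\mathbb{C}^{*}$ in $\mathbb{C}$ is just the careful justification of that limit. Nothing further is needed.
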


Our next results will make extensive use of $G$-invariant polynomials on $\mathfrak{g}$. To be more precise, note that the adjoint action induces an action of $G$ on $\mathbb{C}[\mathfrak{g}]$ (cf. Example \ref{Example: Induced action on functions}), the algebra of regular (ie. polynomial) functions on $\mathfrak{g}$. Let $\mathbb{C}[\mathfrak{g}]^G\subseteq\mathbb{C}[\mathfrak{g}]$ denote the subalgebra of all $G$-invariant polynomial functions, meaning
$$\mathbb{C}[\mathfrak{g}]^G:=\{f\in\mathbb{C}[\mathfrak{g}]:g\cdot f=f\text{ for all }g\in G\}.$$ Analogously, the $W$-action on $\mathfrak{t}$ (see \ref{Subsection: The basic objects}) gives rise to a $W$-action on the polynomial functions $\mathbb{C}[\mathfrak{t}]$, and to a subalgebra $\mathbb{C}[\mathfrak{t}]^W\subseteq\mathbb{C}[\mathfrak{t}]$ of $W$-invariant polynomial functions. One can verify that $G$-invariant polynomials on $\mathfrak{g}$ restrict to $W$-invariant polynomials on $\mathfrak{t}$, and it turns out that this restriction process defines an algebra isomorphism \begin{equation}\label{Equation: Chevalley restriction}
\mathbb{C}[\mathfrak{g}]^G\xrightarrow{\cong}\mathbb{C}[\mathfrak{t}]^W
\end{equation}
(see \cite[Thm. 3.1.38]{Chriss}).
Moreover, $\mathbb{C}[\mathfrak{g}]^G$ (hence also $\mathbb{C}[\mathfrak{t}]^W$) is known to be freely generated as a commutative $\mathbb{C}$-algebra by $r=\text{rank}(G)$ homogeneous polynomials (see \cite[Sect. 3.3]{KostantPoly}).    

\begin{proposition}\label{Proposition: Invariant functions have constant values}
	If $x,y\in\mathfrak{g}$ are semisimple, then $\mathcal{O}(x)=\mathcal{O}(y)$ if and only if $f(x)=f(y)$ for all $f\in\mathbb{C}[\mathfrak{g}]^G$.
\end{proposition}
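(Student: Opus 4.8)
The plan is to prove the two implications separately. The forward implication is immediate: if $\mathcal{O}(x)=\mathcal{O}(y)$, write $y=\Adj_g(x)$ for some $g\in G$, and for any $f\in\mathbb{C}[\mathfrak{g}]^G$ the defining $G$-invariance (recall $(g\cdot f)(z)=f(g^{-1}\cdot z)$) gives $f(y)=f(\Adj_g(x))=f(x)$. Note that this direction needs no semisimplicity hypothesis; semisimplicity enters only in the converse.

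For the converse, suppose $f(x)=f(y)$ for all $f\in\mathbb{C}[\mathfrak{g}]^G$. Since $x$ and $y$ are semisimple, \ref{Subsection: semisimple and nilpotent elements} supplies $g_1,g_2\in G$ with $\Adj_{g_1}(x),\Adj_{g_2}(y)\in\mathfrak{t}$. Replacing $x$ by $\Adj_{g_1}(x)$ and $y$ by $\Adj_{g_2}(y)$ changes neither the adjoint orbits nor the hypothesis (by the forward direction, applied to $\Adj_{g_1}$ and $\Adj_{g_2}$), so we may assume $x,y\in\mathfrak{t}$. Now the Chevalley restriction isomorphism \eqref{Equation: Chevalley restriction} is in particular surjective, so every $p\in\mathbb{C}[\mathfrak{t}]^W$ is the restriction to $\mathfrak{t}$ of some $f\in\mathbb{C}[\mathfrak{g}]^G$; the hypothesis therefore yields $p(x)=p(y)$ for all $p\in\mathbb{C}[\mathfrak{t}]^W$.

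It then suffices to show that $x$ and $y$ lie in a single $W$-orbit: if $y=w\cdot x$ with $w\in W$, then picking a representative $n\in N_G(T)$ of $w$ gives $y=\Adj_n(x)$, whence $\mathcal{O}(x)=\mathcal{O}(y)$. Suppose instead that the finite sets $W\cdot x$ and $W\cdot y$ are disjoint. By several-variable Lagrange interpolation there is a polynomial $q\in\mathbb{C}[\mathfrak{t}]$ with $q\equiv 1$ on $W\cdot x$ and $q\equiv 0$ on $W\cdot y$, and then $p:=\frac{1}{|W|}\sum_{w\in W}w\cdot q$ lies in $\mathbb{C}[\mathfrak{t}]^W$ with $p(x)=1\neq 0=p(y)$, a contradiction. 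Hence $W\cdot x=W\cdot y$, which finishes the argument.

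I expect the main obstacle to be this last separation step — verifying that $W$-invariant polynomials distinguish distinct $W$-orbits in $\mathfrak{t}$ — where finiteness of $W$ is exactly what makes the averaging trick work. Everything else is bookkeeping: the reduction to $\mathfrak{t}$ rests on the already-recorded facts that semisimple elements are $G$-conjugate into $\mathfrak{t}$ and on the Chevalley restriction theorem, while the return trip from $W$-conjugacy to $G$-conjugacy is automatic because $N_G(T)\subseteq G$.
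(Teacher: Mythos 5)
Your proof is correct and follows essentially the same route as the paper: the forward direction is the trivial invariance computation, and the converse reduces to $\mathfrak{t}$ via conjugation of semisimple elements, applies the Chevalley restriction isomorphism, and concludes from the fact that $W$-invariant polynomials separate $W$-orbits in $\mathfrak{t}$. The only difference is that where the paper cites this last separation fact (Lemma 34.2.1 of Tauvel--Yu), you prove it directly by multivariate interpolation plus averaging over the finite group $W$ --- a correct and self-contained substitute for the citation.
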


\begin{proof}
	If $\mathcal{O}(x)=\mathcal{O}(y)$, then $y=\Adj_g(x)$ for some $g\in G$ and $f(y)=f(\Adj_g(x))=f(x)$ for all $f\in\mathbb{C}[\mathfrak{g}]^G$. Conversely, suppose that $f(x)=f(y)$ for all $f\in\mathbb{C}[\mathfrak{g}]^G$. Since $x$ and $y$ are semisimple, there exist $h_x,h_y\in\mathfrak{t}$ such that $\mathcal{O}(h_x)=\mathcal{O}(x)$ and $\mathcal{O}(h_y)=\mathcal{O}(y)$ (see the discussion in \ref{Subsection: semisimple and nilpotent elements}). Also, repeating the argument from the first sentence of our proof, we see $f(h_x)=f(x)$ and $f(h_y)=f(y)$ for all $f\in\mathbb{C}[\mathfrak{g}]^G$. We may therefore assume that $x,y\in\mathfrak{t}$ when proving our converse. It then follows from \eqref{Equation: Chevalley restriction} that $f(x)=f(y)$ for all $f\in\mathbb{C}[\mathfrak{t}]^W$. Using Lemma 34.2.1 of \cite{Tauvel}, we conclude that $y=w\cdot x$ for some $w\in W$. After lifting $w$ to a representative $g\in N_G(T)$, this statement becomes $y=\Adj_g(x)$. Hence $\mathcal{O}(x)=\mathcal{O}(y)$.     
\end{proof}

Focusing now on nilpotent elements and adjoint orbits, we have the following.

\begin{proposition}\label{Proposition: Characterizations of nilpotent elements}
	If $x\in\mathfrak{g}$, then the following conditions are equivalent.
	\begin{itemize}
		\item[(i)] $x$ is nilpotent.
		\item[(ii)] For all $a\in\mathbb{C}^*$, $ax\in\mathcal{O}(x)$.
		\item[(iii)] $0\in\overline{\mathcal{O}(x)}$.
		\item[(iv)] For all $f\in\mathbb{C}[\mathfrak{g}]^G$, $f(x)=f(0)$.
	\end{itemize}
\end{proposition}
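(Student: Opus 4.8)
The plan is to establish the cyclic chain of implications (i) $\Rightarrow$ (ii) $\Rightarrow$ (iii) $\Rightarrow$ (iv) $\Rightarrow$ (i), so that no equivalence is argued twice. For (i) $\Rightarrow$ (ii), assuming $x$ is nilpotent, I would apply the Jacobson-Morozov Theorem to obtain $h\in\mathfrak{g}$ with $[h,x]=2x$. The exponential computation already performed in the proof of Proposition \ref{Proposition: Dilating the nilpotent part} (applied with $x_s=0$ and $x_n=x$) then gives $\Adj_{\exp(ch)}(x)=e^{2c}x$ for every $c\in\mathbb{C}$, so $e^{2c}x\in\mathcal{O}(x)$ for all $c$. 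Given $a\in\mathbb{C}^*$, choosing $c$ with $e^{2c}=a$ produces $ax\in\mathcal{O}(x)$; the case $x=0$ is immediate.

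For (ii) $\Rightarrow$ (iii): the map $\mathbb{C}\to\mathfrak{g}$, $a\mapsto ax$, is a morphism, and $\mathbb{C}^*$ is dense in $\mathbb{C}$ in the Zariski topology, so the image of $\mathbb{C}$ lies in the closure of the image of $\mathbb{C}^*$. By (ii) this latter image is contained in $\mathcal{O}(x)$, whence $\{ax:a\in\mathbb{C}\}\subseteq\overline{\mathcal{O}(x)}$ and in particular $0=0\cdot x\in\overline{\mathcal{O}(x)}$. For (iii) $\Rightarrow$ (iv): each $f\in\mathbb{C}[\mathfrak{g}]^G$ is constant on $\mathcal{O}(x)$ by $G$-invariance, hence constant on $\overline{\mathcal{O}(x)}$ by continuity; since $0\in\overline{\mathcal{O}(x)}$, we conclude $f(x)=f(0)$.

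The one step requiring genuine input is (iv) $\Rightarrow$ (i), and here I would lean on the two earlier results concerning semisimple elements. Write the Jordan decomposition $x=x_s+x_n$; the goal is to show $x_s=0$. By Corollary \ref{Corollary: Semisimple part in closure} we have $x_s\in\overline{\mathcal{O}(x)}$, and since (exactly as above) every $f\in\mathbb{C}[\mathfrak{g}]^G$ is constant on $\overline{\mathcal{O}(x)}$, we obtain $f(x_s)=f(x)$; combining this with hypothesis (iv) gives $f(x_s)=f(0)$ for all $f\in\mathbb{C}[\mathfrak{g}]^G$. Now $x_s$ and $0$ are both semisimple, so Proposition \ref{Proposition: Invariant functions have constant values} applies and yields $\mathcal{O}(x_s)=\mathcal{O}(0)=\{0\}$, i.e. $x_s=0$; hence $x=x_n$ is nilpotent.

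I do not anticipate a real obstacle: the proof is largely a bookkeeping exercise assembling the Jacobson-Morozov Theorem, the density of $\mathbb{C}^*$ in $\mathbb{C}$, the continuity and $G$-invariance of regular functions, and the two cited results. The two points to watch are that constancy of $G$-invariant functions must be transported from an orbit to its closure, and that Proposition \ref{Proposition: Invariant functions have constant values} is invoked precisely because both $x_s$ and $0$ are semisimple — if one tried to compare $x$ itself with $0$ via invariant functions, the semisimplicity hypothesis of that proposition would fail, which is exactly why passing to $x_s$ is the crucial move.
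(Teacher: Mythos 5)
Your proof is correct, and implications (i)$\Rightarrow$(ii)$\Rightarrow$(iii)$\Rightarrow$(iv) follow the paper essentially verbatim (the paper's (i)$\Rightarrow$(ii) simply cites Proposition \ref{Proposition: Dilating the nilpotent part} with $x_s=0$, $x_n=x$, rather than re-running the Jacobson--Morozov argument, and its (ii)$\Rightarrow$(iii) is your density remark phrased as ``letting $a\to 0$''). Where you genuinely diverge is in (iv)$\Rightarrow$(i). The paper introduces the explicit invariant functions $\phi_n(y)=\trace((\adj_y)^n)$, verifies their $G$-invariance via $\adj_{\Adj_g(y)}=\Adj_g\circ\adj_y\circ(\Adj_g)^{-1}$, and concludes from $\trace((\adj_x)^n)=\phi_n(x)=\phi_n(0)=0$ for all $n$ that $\adj_x$ is nilpotent --- a purely linear-algebraic endgame that needs no structure theory beyond the definition of nilpotency. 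You instead pass to the Jordan decomposition, use Corollary \ref{Corollary: Semisimple part in closure} to place $x_s$ in $\overline{\mathcal{O}(x)}$, and invoke Proposition \ref{Proposition: Invariant functions have constant values} to conclude $\mathcal{O}(x_s)=\mathcal{O}(0)=\{0\}$. This is valid and non-circular (neither cited result depends on the present proposition), and it is economical given the machinery already on the table; the trade-off is that it silently imports the Chevalley restriction isomorphism and the separation of $W$-orbits by $W$-invariants, which are the real content behind Proposition \ref{Proposition: Invariant functions have constant values}, whereas the paper's route produces the needed invariant functions by hand and is self-contained. Your closing observation --- that one must compare $x_s$ with $0$ rather than $x$ with $0$ because Proposition \ref{Proposition: Invariant functions have constant values} requires semisimplicity --- is exactly the right point to flag.
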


\begin{proof}
We will verify the implications (i)$\Rightarrow$(ii)$\Rightarrow$(iii)$\Rightarrow$(iv)$\Rightarrow$(i).

\noindent\underline{(i)$\Rightarrow$(ii):} Note that $x_s=0$ and $x_n=x$, so that (ii) follows immediately from Proposition \ref{Proposition: Dilating the nilpotent part}. 

\noindent\underline{(ii)$\Rightarrow$(iii):} Since $ax\in\mathcal{O}(x)$ for all $a\in\mathbb{C}^*$, letting $a\rightarrow 0$ establishes that $0\in\overline{\mathcal{O}(x)}$.

\noindent\underline{(iii)$\Rightarrow$(iv):} If $f\in\mathbb{C}[\mathfrak{g}]^G$, then $f(\Adj_g(x))=f(x)$ for all $g\in G$. In other words, $f$ takes the constant value $f(x)$ on $\mathcal{O}(x)$. By continuity, $f$ actually takes the constant value $f(x)$ on $\overline{\mathcal{O}(x)}$. Since $0\in\overline{\mathcal{O}(x)}$, we see that $f(x)=f(0)$.

\noindent\underline{(iv)$\Rightarrow$(i):} For each positive integer $n$, consider the function $\phi_n\in\mathbb{C}[\mathfrak{g}]$ defined by $\phi_n(y)=\trace((\adj_y)^n)$. To see that $\phi_n\in\mathbb{C}[\mathfrak{g}]^G$, suppose that $g\in G$ and $y,z\in\mathfrak{g}$. We have $\adj_{\Adj_g(y)}(z)=[\Adj_g(y),z]=\Adj_g([y,(\Adj_{g})^{-1}(z)])$, meaning that $\adj_{\Adj_g(y)}=\Adj_g\circ\adj_y\circ(\Adj_g)^{-1}$. Hence, $$\phi_n(\Adj_g(y))=\trace((\adj_{\Adj_g(y)})^n)=\trace(\Adj_g\circ(\adj_y)^n\circ(\Adj_g)^{-1})=\trace((\adj_y)^n)=\phi_n(y),$$ and we conclude that $\phi_n\in \mathbb{C}[\mathfrak{g}]^G$. Now by hypothesis, $$\trace((\adj_x)^n)=\phi_n(x)=\phi_n(0)=0$$ for all positive integers $n$. It follows that $\adj_x$ is a nilpotent endomorphism, as desired. 
\end{proof}

\subsection{The adjoint quotient}\label{Subsection: The adjoint quotient}
Adjoint orbits feature prominently in a very natural and well-studied fibration whose total space is $\mathfrak{g}$. To construct it, one can appeal to the basics of geometric invariant theory. The inclusion $\mathbb{C}[\mathfrak{g}]^G\hookrightarrow\mathbb{C}[\mathfrak{g}]$ corresponds to a $G$-invariant surjective map of affine varieties,
\begin{equation}\label{Equation: The adjoint quotient}
\Phi:\mathfrak{g}\rightarrow\Spec(\mathbb{C}[\mathfrak{g}]^G),
\end{equation}
(see \cite[Sect. 1.4.1, Sect. 1.4.2]{Schmitt}), 
called the \textit{adjoint quotient}. Note $\Spec(\mathbb{C}[\mathfrak{g}]^G)$ refers to the maximal ideal spectrum of $\mathbb{C}[\mathfrak{g}]^G$, while $G$-invariance is the condition that $\phi\circ\Adj_g=\phi$ for all $g\in G$.   

One can say a great deal more about \eqref{Equation: The adjoint quotient} by invoking the discussion of $\mathbb{C}[\mathfrak{g}]^G$ in \ref{Subsection: Some first results}. Indeed, recall that $\mathbb{C}[\mathfrak{g}]^G$ is freely generated by $r:=\text{rank}(G)$ homogeneous generators, $\chi_1,\chi_2,\ldots,\chi_r\in\mathbb{C}[\mathfrak{g}]^G$. We may therefore identify $\Spec(\mathbb{C}[\mathfrak{g}]^G)$ with $\mathbb{C}^r$ and re-write $\Phi$ as the map 
\begin{equation}\label{Equation: The second adjoint quotient}
\Phi:\mathfrak{g}\rightarrow\mathbb{C}^r,\quad \xi\mapsto(\chi_1(\xi),\chi_2(\xi),\ldots,\chi_r(\xi)).
\end{equation}
Kostant studied this form of the adjoint quotient in detail (see \cite{KostantPoly}). More specifically, note that the $G$-invariance of $\Phi$ is equivalent to each fibre of $\Phi$ being a union of adjoint orbits. With this in mind, Kostant gave the following description of each fibre's decomposition into adjoint orbits (see \cite[Thm. 0.6, Thm. 0.7]{KostantPoly}). 

\begin{theorem}\label{Theorem: Kostant's theorem}
If $z\in\Spec(\mathbb{C}[\mathfrak{g}]^G)$, then $\Phi^{-1}(z)$ is a union of finitely many adjoint orbits and contains a unique regular orbit. This regular orbit is open and dense in $\Phi^{-1}(z)$.
\end{theorem}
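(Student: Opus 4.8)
The plan is to reduce everything to the case of a fibre over a point of $\mathfrak{t}/W$ and then extract the structure of the fibre from the Jordan decomposition together with the results already established in \ref{Subsection: Some first results}. First I would fix $z \in \Spec(\mathbb{C}[\mathfrak{g}]^G)$ and observe, via the Chevalley isomorphism \eqref{Equation: Chevalley restriction}, that $z$ corresponds to a $W$-orbit $W\cdot h \subseteq \mathfrak{t}$; after replacing $h$ by a $W$-translate we may assume $h \in \mathfrak{t}$ is the chosen semisimple representative. The key structural claim is that
\begin{equation}
\Phi^{-1}(z) = \{x \in \mathfrak{g} : x_s \in \mathcal{O}(h)\},
\end{equation}
i.e.\ a point lies in the fibre if and only if its semisimple part is conjugate to $h$. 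One inclusion is Corollary \ref{Corollary: Semisimple part in closure} combined with the fact that $\Phi$ is constant on $\overline{\mathcal{O}(x)}$: if $x \in \Phi^{-1}(z)$ then $x_s \in \overline{\mathcal{O}(x)}$ forces $\Phi(x_s) = z$, and since $x_s$ is semisimple, Proposition \ref{Proposition: Invariant functions have constant values} (applied to $x_s$ and $h$, both semisimple with the same invariants) gives $\mathcal{O}(x_s) = \mathcal{O}(h)$. The reverse inclusion is immediate since $\Phi(x) = \Phi(x_s)$ whenever $x = x_s + x_n$ (again by Corollary \ref{Corollary: Semisimple part in closure} plus continuity of invariants on orbit closures).

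Next I would identify the adjoint orbits inside this fibre. Using the $G$-action to move $x_s$ to $h$, every orbit in $\Phi^{-1}(z)$ has a representative of the form $h + e$ with $[h,e] = 0$ and $e$ nilpotent, i.e.\ $e$ is a nilpotent element of the reductive subalgebra $\mathfrak{l} := C_{\mathfrak{g}}(h)$ (reductivity is \cite[Prop. 20.5.13]{Tauvel}, as invoked in the proof of Proposition \ref{Proposition: Dilating the nilpotent part}). Two such elements $h + e$ and $h + e'$ are $G$-conjugate if and only if $e$ and $e'$ are conjugate under $L := C_G(h)^{\circ}$ — the nontrivial direction uses that any $g \in G$ with $\Adj_g(h+e) = h+e'$ must send the semisimple part $h$ to $h$, hence $g \in C_G(h)$, and a connectedness argument (or passing to $\Adj_g$ acting on the centralizer) places the conjugating element in $L$. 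Thus the orbits in $\Phi^{-1}(z)$ are in bijection with the nilpotent $L$-orbits in $\mathcal{N}_{\mathfrak{l}} = \mathcal{N} \cap \mathfrak{l}$; since $\mathfrak{l}$ is reductive, Theorem \ref{Theorem: Finitely many nilpotent orbits} (the finiteness of nilpotent orbits, applied to the semisimple part $[\mathfrak{l},\mathfrak{l}]$) gives that there are only finitely many, establishing the first assertion. The unique regular orbit is the one corresponding to the regular nilpotent orbit of $\mathfrak{l}$, which by the type-$A$-style analysis (or Proposition \ref{Proposition: Unique maximal}) is the unique maximal and the unique open dense one in $\mathcal{N}_{\mathfrak{l}}$; translating by $h$ and inducing up along $G/L$ shows the corresponding orbit $\mathcal{O}(h + e_{\mathrm{reg}})$ is open and dense in $\Phi^{-1}(z)$. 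Openness passes through the fibration $G \times_L (h + \mathcal{N}_{\mathfrak{l}}) \to \Phi^{-1}(z)$ because the open dense $L$-stable subset $h + \mathcal{O}_{\mathrm{reg},\mathfrak{l}}$ pulls back to an open dense $G$-stable subset.

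I would also want to pin down dimensions to make ``regular'' unambiguous: the orbit $\mathcal{O}(h + e_{\mathrm{reg}})$ has dimension $\dim(G/L) + \dim \mathcal{O}_{\mathrm{reg},\mathfrak{l}} = (\dim G - \dim L) + (\dim L - \operatorname{rank} L) = \dim G - \operatorname{rank} G$, using that $\operatorname{rank} L = \operatorname{rank} G$ (both contain the maximal torus $T$, as $h \in \mathfrak{t}$) and that a regular nilpotent orbit in the reductive $\mathfrak{l}$ has codimension $\operatorname{rank} L$. This matches the definition of regular in \ref{Subsection: Dimension and regularity} and simultaneously confirms that every fibre contains a regular orbit, proving the remark deferred there (Corollary \ref{Corollary: The existence of regular orbits}).

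\textbf{Main obstacle.} The delicate point is the descent of conjugacy from $G$ to $L = C_G(h)^{\circ}$ and, relatedly, the transfer of ``open and dense'' across the fibration $G \times_L (\text{fibre in } \mathfrak{l}) \to \Phi^{-1}(z)$: one must check this map is surjective with the expected fibre structure and that it is open, so that density and openness of the regular locus in the ``linearized'' fibre $h + \mathcal{N}_{\mathfrak{l}}$ genuinely propagate. Handling the possible disconnectedness of $C_G(h)$ (which can happen even for $G$ simply connected once one passes to a centralizer that need not be simply connected) requires a little care, though it does not affect the count of orbits since the relevant nilpotent orbits are already stable under the component group.
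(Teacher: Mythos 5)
The paper itself does not prove this theorem; it is quoted from Kostant's work \cite{KostantPoly} without argument, so there is no in-paper proof to compare against. Your strategy is the standard modern one (reduce to the reductive centralizer $\mathfrak{l}=C_{\mathfrak{g}}(h)$ of the semisimple part, identify the orbits in the fibre with nilpotent $C_G(h)$-orbits in $\mathfrak{l}$, then use finiteness plus a dimension count), and the reduction itself is carried out correctly: the identification $\Phi^{-1}(z)=\{x:x_s\in\mathcal{O}(h)\}$ via Corollary \ref{Corollary: Semisimple part in closure} and Proposition \ref{Proposition: Invariant functions have constant values} is right, as is the descent of conjugacy to $C_G(h)$ via uniqueness of the Jordan decomposition (and your worry about disconnectedness is moot here: for $G$ simply connected, $C_G(h)$ is connected, as the paper already uses in Proposition \ref{Proposition: Stabilizer structure}).

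The genuine problem is circularity with the paper's own logical structure. You invoke Theorem \ref{Theorem: Finitely many nilpotent orbits} to get finitely many nilpotent orbits in $[\mathfrak{l},\mathfrak{l}]$, and Proposition \ref{Proposition: Unique maximal} to get the unique open dense nilpotent orbit there — but both of those results are deduced in this paper \emph{from} Theorem \ref{Theorem: Kostant's theorem} (the former via $\Phi^{-1}(0)=\mathcal{N}$, the latter via the density statement for the fibre over $0$). As written, your proof therefore assumes what it sets out to prove. To close the loop you must supply independent inputs: finiteness of nilpotent orbits in a semisimple Lie algebra has proofs not passing through the adjoint quotient (Jacobson--Morozov together with the conjugacy classification of $\mathfrak{sl}_2(\mathbb{C})$-triples, or weighted Dynkin diagrams — the paper mentions both), and the existence of a unique dense nilpotent orbit in $\mathcal{N}_{\mathfrak{l}}$ should be derived from irreducibility of the nilpotent cone (it is the image of $L\times\mathfrak{n}_{\mathfrak{l}}$, hence irreducible) combined with that finiteness, together with the computation $\dim\mathcal{N}_{\mathfrak{l}}=\dim\mathfrak{l}-\operatorname{rank}\mathfrak{l}$. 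Relatedly, the final openness/density transfer is under-justified: rather than asserting that $G\times_{L}(h+\mathcal{N}_{\mathfrak{l}})\to\Phi^{-1}(z)$ is an open map (a bijective morphism need not be an isomorphism without normality of the target), it is cleaner to observe that $\Phi^{-1}(z)$ is irreducible of dimension at most $\dim(G/L)+\dim\mathcal{N}_{\mathfrak{l}}=\dim G-\operatorname{rank}G$ and contains an orbit of exactly that dimension, whence that orbit is open and dense (orbits are open in their closures) and every other orbit in the fibre has strictly smaller dimension, giving uniqueness of the regular one. With these repairs the argument is complete; without them it is not a self-contained proof.
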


\begin{corollary}
If $\mathcal{O}\subseteq\mathfrak{g}$ is an adjoint orbit, then $\overline{\mathcal{O}}$ is a union of finitely many adjoint orbits.
\end{corollary}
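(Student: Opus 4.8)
The plan is to deduce this corollary directly from Kostant's Theorem~\ref{Theorem: Kostant's theorem} together with Corollary~\ref{Corollary: Semisimple part in closure} and Proposition~\ref{Proposition: orbit boundary}. The rough idea is that $\overline{\mathcal{O}}$ is contained in a single fibre of the adjoint quotient $\Phi$, and Kostant tells us each fibre has only finitely many orbits; so it suffices to know that $\overline{\mathcal{O}}$ is a \emph{union} of orbits, which is automatic since $\overline{\mathcal{O}}$ is $G$-invariant (the closure of a $G$-invariant set is $G$-invariant, as each $\Adj_g$ is a homeomorphism).

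First I would observe that $\Phi$ is continuous (indeed a morphism of varieties) and constant on $\mathcal{O}$, since each $\chi_i\in\mathbb{C}[\mathfrak{g}]^G$. Hence $\Phi$ is constant on $\overline{\mathcal{O}}$, say with value $z\in\Spec(\mathbb{C}[\mathfrak{g}]^G)$, so that $\overline{\mathcal{O}}\subseteq\Phi^{-1}(z)$. Next I would note that $\overline{\mathcal{O}}$ is a union of $G$-orbits: it is $G$-invariant because $\mathcal{O}$ is and each $\Adj_g\colon\mathfrak{g}\to\mathfrak{g}$ is a homeomorphism, so $\Adj_g(\overline{\mathcal{O}})=\overline{\Adj_g(\mathcal{O})}=\overline{\mathcal{O}}$. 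Every $G$-orbit meeting $\overline{\mathcal{O}}$ therefore lies entirely inside it, and these orbits are pairwise disjoint, so $\overline{\mathcal{O}}$ is a disjoint union of orbits, all contained in $\Phi^{-1}(z)$.

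Finally, by Theorem~\ref{Theorem: Kostant's theorem}, $\Phi^{-1}(z)$ is itself a union of only finitely many adjoint orbits. Since the orbits comprising $\overline{\mathcal{O}}$ form a subcollection of the orbits comprising $\Phi^{-1}(z)$, there can only be finitely many of them. Hence $\overline{\mathcal{O}}$ is a union of finitely many adjoint orbits, as claimed.

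There is really no serious obstacle here; the only point requiring a moment's care is the reduction to a single fibre, which is immediate once one notes that $\Phi$ is continuous and $G$-invariant. (One could alternatively bypass explicit mention of $\Phi$ by inducting on $\dim(\mathcal{O})$ using Proposition~\ref{Proposition: orbit boundary}, but this would additionally require knowing that $\mathfrak{g}$ contains only finitely many orbits of each given dimension, which is not established in the excerpt; the fibrewise argument via Kostant's theorem is cleaner and self-contained.)
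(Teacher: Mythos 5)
Your proposal is correct and follows essentially the same route as the paper: both reduce to a single fibre of the adjoint quotient $\Phi$ (you via continuity of $\Phi$, the paper via closedness of the fibre $\Phi^{-1}(\Phi(x))$ — the same observation) and then invoke Theorem \ref{Theorem: Kostant's theorem} together with the fact that $\overline{\mathcal{O}}$, being $G$-invariant, is a union of orbits. No gaps.
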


\begin{proof}
Choose a point $x\in\mathcal{O}$. Since the fibres of $\Phi$ are $G$-invariant, it follows that $\mathcal{O}$ belongs to $\Phi^{-1}(\Phi(x))$. This fibre is closed, meaning that $\overline{\mathcal{O}}\subseteq\Phi^{-1}(\Phi(x))$. Theorem \ref{Theorem: Kostant's theorem} implies that $\Phi^{-1}(\Phi(x))$ is a union of finitely many adjoint orbits, so that the same must be true of $\overline{\mathcal{O}}$.  
\end{proof}    

\subsection{Geometric features}\label{Subsection: Geometric features}
We now turn our attention to some more inherently geometric aspects of adjoint orbits. However, to avoid the lengthy digressions needed to properly motivate certain definitions, we will sometimes not define the geometric structures under consideration (ex. Poisson and hyperk\"{a}hler structures). In such cases, we will give suitable references to the research literature. 

Consider the \textit{coadjoint representation} $\Adj^*:G\rightarrow\GL(\mathfrak{g}^*)$, $g\mapsto\Adj_{g}^*$, defined as follows:
$$(\Adj_g^*(\phi))(x):=\phi(\Adj_{g^{-1}}(x)),\quad g\in G,\text{ }\phi\in\mathfrak{g}^*\text{ },x\in\mathfrak{g}.$$    
This representation gives $\mathfrak{g}^*$ the structure of a $G$-variety, and the resulting orbits are called \textit{coadjoint orbits}. Moreover, $\mathfrak{g}^*$ is known to carry a canonical holomorphic Poisson structure whose symplectic leaves are precisely the coadjoint orbits (see \cite[Sect. 1.2, 1.3]{Chriss}). It follows that every coadjoint orbit inherits a holomorphic symplectic form, namely a closed, non-degenerate, holomorphic $2$-form (see \cite[Sect. 1.1]{Chriss} for further details on symplectic forms). One call this particular symplectic form the \textit{Kirillov-Kostant-Souriau form}.

Now, recall that the Killing form induces a vector space isomorphism $\mathfrak{g}\cong\mathfrak{g}^*$. Noting that the Killing form is $G$-invariant (see \ref{Subsection: The basic objects}), one can verify that our isomorphism is $G$-equivariant. Hence, in addition to identifying $\mathfrak{g}$ with $\mathfrak{g}^*$, the isomorphism identifies adjoint orbits with coadjoint orbits. We may therefore transfer the above-mentioned geometric structures on $\mathfrak{g}^*$ and coadjoint orbits to $\mathfrak{g}$ and adjoint orbits, respectively. The Lie algebra $\mathfrak{g}$ then has a canonical holomorphic Poisson structure and its symplectic leaves are exactly the adjoint orbits. In particular, every adjoint orbit $\mathcal{O}\subseteq\mathfrak{g}$ carries a preferred holomorphic symplectic form, $\omega_{\mathcal{O}}\in\Omega^2(\mathcal{O})$. To describe it, fix $x\in\mathcal{O}$ and identify the tangent space $T_x\mathcal{O}$ with $\mathfrak{g}/C_{\mathfrak{g}}(x)$ via \eqref{Equation: Tangent space description}, so that the restriction of $\omega_{\mathcal{O}}$ to $x$ is a bilinear form on $\mathfrak{g}/C_{\mathfrak{g}}(x)$. We then have
\begin{equation}\label{Equation: The canonical symplectic form}
\omega_{\mathcal{O}}\vert_x:\mathfrak{g}/C_{\mathfrak{g}}(x)\otimes\mathfrak{g}/C_{\mathfrak{g}}(x)\rightarrow\mathbb{C},\quad\omega_{\mathcal{O}}\vert_x([y],[z]):=\langle x,[y,z]\rangle\end{equation}
for all $[y],[z]\in\mathfrak{g}/C_{\mathfrak{g}}(x)$.

Thus far, our treatment of adjoint orbits and geometry has been classical. However, there is a more modern and elaborate framework within which to appreciate the canonical symplectic forms described above -- \textit{hyperk\"{a}hler geometry}. It turns out that each adjoint orbit is a hyperk\"{a}hler manifold (see \cite{Dancer} for a definition) whose underlying holomorphic symplectic form is the canonical one we have just described. This result was proved by Kronheimer \cite{KronheimerInstantons,KronheimerSemisimple} in important special cases and by Biquard \cite{Biquard} and Kovalev \cite{Kovalev} in full generality. The key is to identify each adjoint orbit with a certain moduli space of solutions to \textit{N\"{a}hm's equations} (see \cite[Sect. 2.2]{Bielawski}), known to be a hyperk\"{a}hler manifold, and then transfer the hyperk\"{a}hler structure over to the orbit. It is nevertheless often difficult to describe this induced hyperk\"{a}hler structure in explicit terms (see \cite[Sect 2.3]{Bielawski}). This is one of several unresolved issues that have made and continue to make the hyperk\"{a}hler geometry of adjoint orbits an active area of research (see \cite{BielawskiReducible,Santa-Cruz,Villumsen,Kobak,BielawskiOnthe}).  
 
\section{Semisimple Orbits}\label{Section: Semisimple orbits} 
Informally speaking, Example \ref{Example: Type A adjoint orbits} shows adjoint orbits to be generalizations of matrix conjugacy classes. In what follows, we will study the adjoint orbits that generalize the conjugacy classes of diagonalizable matrices -- the \textit{semisimple orbits}.  

\subsection{Definitions and characterizations}\label{Subsection: Definitions and characterizations}
Recall from \ref{Subsection: semisimple and nilpotent elements} that the subvariety of semisimple elements $\mathfrak{g}_{\text{ss}}\subseteq\mathfrak{g}$ is invariant under the adjoint representation of $G$. One may rephrase this $G$-invariance in the following way: an adjoint orbit contains a semisimple element if and only if it consists of semisimple elements. The adjoint orbits satisfying these equivalent conditions are called the \textit{semisimple orbits}.   

\begin{definition}\label{Definition: Semisimple orbit}
An adjoint orbit $\mathcal{O}\subseteq\mathfrak{g}$ is called a \textit{semisimple orbit} if $\mathcal{O}\cap\mathfrak{g}_{\text{ss}}\neq\emptyset$, or equivalently $\mathcal{O}\subseteq\mathfrak{g}_{\text{ss}}$.
\end{definition}

\begin{example}\label{Example: Type A semisimple orbits} 
Let $G=\SL_n(\mathbb{C})$. Using Example \ref{Example: Type A adjoint orbits} and Theorem \ref{Theorem: equivalence of semisimple / nilpotent}, we see that the semisimple orbits in $\mathfrak{sl}_n(\mathbb{C})$ are precisely the conjugacy classes of semisimple (ie. diagonalizable) $n\times n$ matrices having zero trace. 
\end{example}

While we shall take Definition \ref{Definition: Semisimple orbit} to be our definition of a semisimple orbit, there are alternatives. To motivate one of these, recall that Example \ref{Example: Conjugation} shows the conjugacy class of a diagonal $n\times n$ matrix with pairwise distinct eigenvalues to be closed in $\text{Mat}_{n\times n}(\mathbb{C})$. Now suppose that the matrix in question lies in $\mathfrak{sl}_n(\mathbb{C})$. Its $\SL_n(\mathbb{C})$-adjoint orbit is semisimple (see Example \ref{Example: Type A semisimple orbits}), and the argument from Example \ref{Example: Conjugation} shows the orbit to be closed in $\mathfrak{sl}_n(\mathbb{C})$. At the same time, the matrices $N_1$ and $N_2$ from Example \ref{Example: Conjugation} belong to $\mathfrak{sl}_n(\mathbb{C})$. Their $\SL_n(\mathbb{C})$-adjoint orbits are not semisimple (see Example \ref{Example: Type A semisimple orbits} again) and the arguments from Example \ref{Example: Conjugation} show these orbits to be non-closed in $\mathfrak{sl}_n(\mathbb{C})$. Our findings here are an instance of a general fact: an adjoint orbit is closed if and only if it is semisimple. Before proving this, we present the following lemma.

\begin{lemma}\label{Lemma: Unique semisimple orbit in closure}
	If $\mathcal{O}\subseteq\mathfrak{g}$ is an adjoint orbit, then there exists a unique semisimple orbit belonging to $\overline{\mathcal{O}}$.
\end{lemma}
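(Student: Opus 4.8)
The plan is to deduce both existence and uniqueness from Corollary~\ref{Corollary: Semisimple part in closure} together with Proposition~\ref{Proposition: Invariant functions have constant values} and the machinery of the adjoint quotient. For existence, pick any $x\in\mathcal{O}$ with Jordan decomposition $x=x_s+x_n$. Corollary~\ref{Corollary: Semisimple part in closure} gives $x_s\in\overline{\mathcal{O}}$, and since $x_s$ is semisimple, the orbit $\mathcal{O}(x_s)$ is a semisimple orbit. Because $\overline{\mathcal{O}}$ is $G$-invariant and closed, it contains $\overline{\mathcal{O}(x_s)}$, hence in particular $\mathcal{O}(x_s)$ itself. This produces a semisimple orbit inside $\overline{\mathcal{O}}$.

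For uniqueness, suppose $\mathcal{O}(y_1)$ and $\mathcal{O}(y_2)$ are both semisimple orbits contained in $\overline{\mathcal{O}}$, with $y_1,y_2$ semisimple. Every $f\in\mathbb{C}[\mathfrak{g}]^G$ is constant on $\mathcal{O}$ (by $G$-invariance) and hence, by continuity, constant on all of $\overline{\mathcal{O}}$; call this common value $c_f$. Then $f(y_1)=c_f=f(y_2)$ for every $f\in\mathbb{C}[\mathfrak{g}]^G$. Since $y_1$ and $y_2$ are semisimple, Proposition~\ref{Proposition: Invariant functions have constant values} applies directly and yields $\mathcal{O}(y_1)=\mathcal{O}(y_2)$. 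This establishes uniqueness.

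The only point requiring a little care — and the step I would flag as the main (modest) obstacle — is the passage from $x_s\in\overline{\mathcal{O}}$ to the full inclusion $\mathcal{O}(x_s)\subseteq\overline{\mathcal{O}}$; but this is immediate once one observes that $\overline{\mathcal{O}}$ is closed and $G$-stable, so it contains the $G$-orbit of any of its points. Everything else is a direct invocation of results already proved in \ref{Subsection: Some first results}: the genuine content of the lemma is Proposition~\ref{Proposition: Invariant functions have constant values}, which supplies the separation of distinct semisimple orbits by invariant polynomials, and Corollary~\ref{Corollary: Semisimple part in closure}, which supplies existence. One could alternatively phrase the uniqueness half via Kostant's Theorem~\ref{Theorem: Kostant's theorem} (distinct closed orbits lie in distinct fibres of $\Phi$), but the argument via Proposition~\ref{Proposition: Invariant functions have constant values} is shorter and self-contained.
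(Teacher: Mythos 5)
Your proposal is correct and follows essentially the same route as the paper: existence via Corollary~\ref{Corollary: Semisimple part in closure} applied to the Jordan decomposition of a point of $\mathcal{O}$, and uniqueness by noting that every $f\in\mathbb{C}[\mathfrak{g}]^G$ is constant on $\overline{\mathcal{O}}$ and then invoking Proposition~\ref{Proposition: Invariant functions have constant values}. The extra remark that $\overline{\mathcal{O}}$ is closed and $G$-stable, so it contains the whole orbit $\mathcal{O}(x_s)$, is a fine (if implicit in the paper) clarification.
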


\begin{proof}
	Choose a point $x\in\mathcal{O}$. For existence, Corollary \ref{Corollary: Semisimple part in closure} implies that $x_s\in\overline{\mathcal{O}}$. It follows that $\mathcal{O}(x_s)$ is a semisimple orbit lying in $\overline{\mathcal{O}}$. As for uniqueness, suppose that $f\in\mathbb{C}[\mathfrak{g}]^G$. Since $f(\Adj_g(x))=f(x)$ for all $g\in G$, we see that $f$ is constant-valued on $\mathcal{O}$. By continuity, $f$ is actually constant-valued on $\overline{\mathcal{O}}$. In particular, any two semisimple $y,z\in\overline{\mathcal{O}}$ must satisfy $f(y)=f(z)$. By Proposition \ref{Proposition: Invariant functions have constant values}, $\mathcal{O}(y)=\mathcal{O}(z)$.  
\end{proof}  

\begin{theorem}\label{Theorem: Semisimple equivalent to closed}
	An adjoint orbit $\mathcal{O}\subseteq\mathfrak{g}$ is semisimple if and only if it is closed in $\mathfrak{g}$.
\end{theorem}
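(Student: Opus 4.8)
The plan is to prove both implications using the tools assembled in Sections~\ref{Section: Some first results} and~\ref{Subsection: Definitions and characterizations}, with Lemma~\ref{Lemma: Unique semisimple orbit in closure} and Corollary~\ref{Corollary: Semisimple part in closure} doing most of the work.

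\medskip

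\noindent\emph{Suppose $\mathcal{O}$ is semisimple.} I would argue that a semisimple orbit equals its own closure. Pick $x\in\mathcal{O}$; since $x$ is semisimple, its Jordan decomposition is $x=x_s$ with $x_n=0$. By Lemma~\ref{Lemma: Unique semisimple orbit in closure}, $\overline{\mathcal{O}}$ contains a \emph{unique} semisimple orbit, and $\mathcal{O}=\mathcal{O}(x_s)$ is one such; so $\mathcal{O}$ is the only semisimple orbit in $\overline{\mathcal{O}}$. Now suppose $\overline{\mathcal{O}}\setminus\mathcal{O}$ were nonempty and pick an orbit $\mathcal{O}'\subseteq\overline{\mathcal{O}}\setminus\mathcal{O}$ of minimal dimension among orbits in the boundary; equivalently, a closed orbit in $\overline{\mathcal{O}}$ (a standard fact: the boundary $\overline{\mathcal{O}}\setminus\mathcal{O}$ is a nonempty closed $G$-invariant set of strictly smaller dimension, so it contains an orbit of minimal dimension, which is then closed). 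Choose $y\in\mathcal{O}'$; by Corollary~\ref{Corollary: Semisimple part in closure}, $y_s\in\overline{\mathcal{O}'}=\mathcal{O}'$, so $\mathcal{O}'=\mathcal{O}(y_s)$ is semisimple. This contradicts uniqueness, unless $\mathcal{O}'=\mathcal{O}$, which is impossible since $\mathcal{O}'\subseteq\overline{\mathcal{O}}\setminus\mathcal{O}$. Hence $\overline{\mathcal{O}}\setminus\mathcal{O}=\emptyset$, i.e.\ $\mathcal{O}$ is closed.

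\medskip

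\noindent\emph{Suppose $\mathcal{O}$ is closed.} Pick $x\in\mathcal{O}$ with Jordan decomposition $x=x_s+x_n$. By Corollary~\ref{Corollary: Semisimple part in closure}, $x_s\in\overline{\mathcal{O}}=\mathcal{O}$, so $x_s\in\mathcal{O}$, which forces $\mathcal{O}=\mathcal{O}(x_s)=\mathcal{O}(x)$. Applying a suitable $g\in G$ we may take $x=x_s$, and then Proposition~\ref{Proposition: Dilating the nilpotent part} (with nilpotent part $x_n$) gives $x=x_s=x_s+x_n\in\mathcal{O}$ already; more directly, since $x\in\mathcal{O}(x_s)$ and $\mathcal{O}(x)=\mathcal{O}(x_s)$ we get that $x$ and $x_s$ are conjugate, hence have the same Jordan decomposition up to conjugacy, forcing $x_n=0$. (Cleanest: $\mathcal{O}(x)=\mathcal{O}(x_s)$ implies $\dim C_G(x)=\dim C_G(x_s)$, and combined with $x\in\mathcal{O}(x_s)$ — so $x=\Adj_g(x_s)$ is semisimple — we conclude $x$ is semisimple.) Either way $x\in\mathfrak{g}_{\mathrm{ss}}$, so $\mathcal{O}$ is a semisimple orbit.

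\medskip

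\noindent The main obstacle is the first direction: closedness of a semisimple orbit does not follow from Corollary~\ref{Corollary: Semisimple part in closure} alone, and one must rule out \emph{any} boundary orbit, not just recover the semisimple part. The key point is that every boundary orbit's closure, being itself a closed $G$-invariant subvariety, contains a closed (minimal-dimension) orbit, which by Corollary~\ref{Corollary: Semisimple part in closure} is semisimple — so Lemma~\ref{Lemma: Unique semisimple orbit in closure}'s uniqueness clause collides with the presence of the original semisimple $\mathcal{O}$ in the same closure. Once this descent-to-a-closed-orbit step is in place, everything else is a short formal deduction from the already-established results.
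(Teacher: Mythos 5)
Your proof is correct, and both directions rest on exactly the same two ingredients as the paper's proof: Corollary~\ref{Corollary: Semisimple part in closure} and the uniqueness clause of Lemma~\ref{Lemma: Unique semisimple orbit in closure}. The backward direction is identical (and your extra discussion of why $x$ itself must be semisimple is redundant once $x_s\in\mathcal{O}$, since Definition~\ref{Definition: Semisimple orbit} only asks that the orbit \emph{meet} $\mathfrak{g}_{\text{ss}}$). The forward direction is where you diverge: you argue by contradiction, first descending to a closed orbit $\mathcal{O}'$ in the boundary via the standard fact that a minimal-dimension orbit in a closed invariant set is closed (which does follow from Proposition~\ref{Proposition: orbit boundary}), and then applying Corollary~\ref{Corollary: Semisimple part in closure} to $\mathcal{O}'$ to manufacture a second semisimple orbit in $\overline{\mathcal{O}}$. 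The paper avoids this descent step entirely: for an arbitrary $y\in\overline{\mathcal{O}}$ it applies the corollary to $y$ directly, uses uniqueness to get $\mathcal{O}=\mathcal{O}(y_s)$, and then observes $\mathcal{O}=\mathcal{O}(y_s)\leq\mathcal{O}(y)\leq\mathcal{O}$, so antisymmetry of the closure order forces $y\in\mathcal{O}$. Your route is valid but imports one more auxiliary fact; the paper's is slightly more economical and yields the stronger-looking statement ``every point of $\overline{\mathcal{O}}$ lies in $\mathcal{O}$'' without a contradiction argument.
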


\begin{proof}
	Assume that $\mathcal{O}$ is semisimple and let $y\in\overline{\mathcal{O}}$ be given. It follows that $\mathcal{O}(y)\subseteq\overline{\mathcal{O}}$, or equivalently $\overline{\mathcal{O}(y)}\subseteq\overline{\mathcal{O}}$. Using this together with Corollary \ref{Corollary: Semisimple part in closure}, we see that  $y_s\in\overline{\mathcal{O}}$. The orbit $\mathcal{O}(y_s)$ is then a semisimple orbit belonging to $\overline{\mathcal{O}}$, and Lemma \ref{Lemma: Unique semisimple orbit in closure} implies that $\mathcal{O}=\mathcal{O}(y_s)$. We already know that $\mathcal{O}(y)\subseteq\overline{\mathcal{O}}$ (ie. $\mathcal{O}(y)\leq\mathcal{O}$), while Corollary \ref{Corollary: Semisimple part in closure} gives $\mathcal{O}(y_s)\subseteq\overline{\mathcal{O}(y)}$ (ie. $\mathcal{O}=\mathcal{O}(y_s)\leq\mathcal{O}(y)$). Hence $\mathcal{O}(y)=\mathcal{O}$. This implies that $y\in\mathcal{O}$, and we conclude that $\mathcal{O}$ is closed.
	
	Conversely, assume that $\mathcal{O}$ is closed in $\mathfrak{g}$ and choose a point $x\in\mathcal{O}$. Corollary \ref{Corollary: Semisimple part in closure} implies that $x_s\in\overline{\mathcal{O}}=\mathcal{O}$, meaning that $\mathcal{O}=\mathcal{O}(x_s)$ is a semisimple orbit. 
\end{proof}

\begin{corollary}\label{Corollary: Semisimple orbits are affine}
	If $\mathcal{O}\subseteq\mathfrak{g}$ is a semisimple orbit, then $\mathcal{O}$ is an affine variety.
\end{corollary}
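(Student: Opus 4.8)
The plan is to reduce the statement to two facts: that semisimple orbits are closed in $\mathfrak{g}$, and that closed subvarieties of affine varieties are again affine. First I would invoke Theorem \ref{Theorem: Semisimple equivalent to closed}, which asserts that an adjoint orbit is semisimple if and only if it is closed in $\mathfrak{g}$. Since $\mathcal{O}$ is assumed semisimple, this immediately gives that $\mathcal{O}$ is a closed subvariety of $\mathfrak{g}$.

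Next I would observe that $\mathfrak{g}$ is itself an affine variety: as a finite-dimensional complex vector space of dimension $d = \dim(\mathfrak{g})$, it is isomorphic as a variety to $\mathbb{C}^d = \Spec(\mathbb{C}[x_1,\ldots,x_d])$. It is a standard fact that a closed subvariety of an affine variety is affine (its coordinate ring being the quotient of the ambient coordinate ring by the defining ideal). Applying this to the closed subvariety $\mathcal{O} \subseteq \mathfrak{g}$ yields that $\mathcal{O}$ is an affine variety, completing the argument.

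There is essentially no obstacle here; the only substantive content is Theorem \ref{Theorem: Semisimple equivalent to closed}, which has already been established, and the passage from ``closed in an affine variety'' to ``affine'' is purely formal. If one wished to be fully self-contained one could instead cite the general principle directly, e.g. \cite[Chapt. II]{Humphreys}, but no genuine work remains to be done.
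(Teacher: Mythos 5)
Your proof is correct and follows exactly the route the paper takes: apply Theorem \ref{Theorem: Semisimple equivalent to closed} to see that $\mathcal{O}$ is closed in the affine variety $\mathfrak{g}$, and then use the standard fact that a closed subvariety of an affine variety is affine. The paper compresses this into a single sentence, but the content is identical.
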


\begin{proof}
	This follows from the fact that $\mathcal{O}$ is closed in the affine variety $\mathfrak{g}$.	
\end{proof}

\subsection{Stabilizer descriptions}\label{Subsection: Stabilizer descriptions}
Recall from \ref{Subsection: semisimple and nilpotent elements} that $x\in\mathfrak{g}$ is semisimple if and only if $\Adj_g(x)\in\mathfrak{t}$ for some $g\in G$. It follows that an adjoint orbit is semisimple if and only if it is of the form $\mathcal{O}(h)$ for $h\in\mathfrak{t}$. Now given $h_1,h_2\in\mathfrak{t}$, one can show that $\mathcal{O}(h_1)=\mathcal{O}(h_2)$ if and only if $h_1$ and $h_2$ belong to the same $W$-orbit in $\mathfrak{t}$. We conclude that $[h]\mapsto \mathcal{O}(h)$ defines a bijection from $\mathfrak{t}/W$ to the set of semisimple orbits. At the same time, each equivalence class in $\mathfrak{t}/W$ has a unique representative belonging to \begin{equation}\label{Equation: Fundamental domain}\mathcal{D}:=\{h\in\mathfrak{t}:\forall\alpha\in\Pi,\text{ Re}(\alpha(h))\geq 0\text{ and }\text{Re}(\alpha(h))=0\Longrightarrow\text{Im}(\alpha(h))\geq 0\}
\end{equation}
(see \cite{Collingwood}, Section 2.2). Our parametrization of semisimple orbits therefore takes the form $h\mapsto\mathcal{O}(h)$, $h\in\mathcal{D}$.

Given $h\in\mathcal{D}$, we have the following instance of the orbit-stabilizer isomorphism \eqref{Equation: Orbit-stabilizer isomorphism}:
\begin{equation}\label{Equation: semisimple orbit-stabilizer isomorphism}
\mathcal{O}(h)\cong G/C_G(h).
\end{equation}
In the interest of using \eqref{Equation: semisimple orbit-stabilizer isomorphism} to study semisimple orbits, we turn our attention to the structure of $C_G(h)$. To this end, set
$$\Pi(h):=\{\alpha\in\Pi:\alpha(h)=0\}.$$ Recall that $\Pi(h)$ determines a standard parabolic subgroup $P_{\Pi(h)}$ with Levi decomposition $P_{\Pi(h)}=U_{\Pi(h)}\rtimes L_{\Pi(h)}$, as explained in \ref{Subsection: The basic objects}. 

\begin{proposition}\label{Proposition: Stabilizer structure}
If $h\in\mathcal{D}$, then $C_G(h)=L_{\Pi(h)}$.
\end{proposition}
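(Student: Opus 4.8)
The plan is to prove the identity first at the level of Lie algebras and then upgrade it to the group level. For the Lie algebra statement, I would use the root space decomposition \eqref{Equation: Root space decomposition} together with the fact that $\adj_h$ acts on $\mathfrak{g}_\alpha$ by the scalar $\alpha(h)$ and trivially on $\mathfrak{t}$, so that $C_{\mathfrak{g}}(h)=\{y\in\mathfrak{g}:[h,y]=0\}=\mathfrak{t}\oplus\bigoplus_{\alpha\in\Delta,\,\alpha(h)=0}\mathfrak{g}_\alpha$. The crux is then the combinatorial claim that for $h\in\mathcal{D}$ one has $\{\alpha\in\Delta:\alpha(h)=0\}=\Delta_{\Pi(h)}$. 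The inclusion $\supseteq$ is immediate. For $\subseteq$, given a positive root $\alpha=\sum_{\beta\in\Pi}c_\beta\beta$ with all $c_\beta\geq 0$ and $\alpha(h)=0$, one notes that $0=\mathrm{Re}(\alpha(h))=\sum_\beta c_\beta\,\mathrm{Re}(\beta(h))$ is a sum of non-negative terms by the first defining condition of $\mathcal{D}$, forcing $\mathrm{Re}(\beta(h))=0$ for every $\beta$ with $c_\beta>0$; the second defining condition of $\mathcal{D}$ then gives $\mathrm{Im}(\beta(h))\geq 0$ for those $\beta$, so $0=\mathrm{Im}(\alpha(h))=\sum_\beta c_\beta\,\mathrm{Im}(\beta(h))$ is again a sum of non-negative terms, forcing $\beta(h)=0$, i.e.\ $\beta\in\Pi(h)$, whenever $c_\beta>0$. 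Hence $\alpha\in\Delta_{\Pi(h)}$, and negative roots follow by negation. Comparing with \eqref{Equation: Lie algebra of Levi factor} yields $C_{\mathfrak{g}}(h)=\mathfrak{l}_{\Pi(h)}$.

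Since $C_{\mathfrak{g}}(h)$ is the Lie algebra of $C_G(h)$, it follows that $L_{\Pi(h)}$ and $C_G(h)^{\circ}$ are connected closed subgroups of $G$ with the same Lie algebra, hence equal; in particular $L_{\Pi(h)}\subseteq C_G(h)$. The remaining task is the reverse inclusion $C_G(h)\subseteq L_{\Pi(h)}$, equivalently the connectedness of $C_G(h)$, and this is where I expect the real work to lie.

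My proposed route for the reverse inclusion: fix $g\in C_G(h)$. Conjugation by $\Adj_g$ sends $\adj_h$ to $\adj_{\Adj_g(h)}=\adj_h$, so $\Adj_g$ commutes with $\adj_h$ and therefore preserves its $0$-eigenspace $C_{\mathfrak{g}}(h)=\mathfrak{l}_{\Pi(h)}$; as $L_{\Pi(h)}$ is the unique connected subgroup with this Lie algebra, $g$ normalizes $L_{\Pi(h)}$. Since $T\subseteq L_{\Pi(h)}$ is a maximal torus and $g$ normalizes $L_{\Pi(h)}$, the torus $gTg^{-1}$ is another maximal torus of $L_{\Pi(h)}$, so by conjugacy of maximal tori inside the connected group $L_{\Pi(h)}$ there is $\ell\in L_{\Pi(h)}$ with $\ell g\in N_G(T)$. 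Writing $w\in W$ for the class of $\ell g$ and using $\ell g\in C_G(h)$, we get $w\cdot h=h$. At this point I would invoke the key fact that for $h\in\mathcal{D}$ the stabilizer of $h$ in $W$ is exactly the standard parabolic subgroup $W_{\Pi(h)}:=\langle s_\alpha:\alpha\in\Pi(h)\rangle$: writing $h=a+ib$ with $a,b$ in the real span $\mathfrak{t}_{\mathbb{R}}$ of the simple coroots $h_\alpha$, the defining conditions of $\mathcal{D}$ say precisely that $a$ lies in the closed dominant Weyl chamber and that $b$ is dominant for the sub-root system with simple roots $\{\alpha\in\Pi:\alpha(a)=0\}$, so Chevalley's theorem on Weyl-group stabilizers applied first to $wa=a$ and then to $wb=b$ gives $w\in W_{\Pi(h)}$. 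Finally, $W_{\Pi(h)}$ is the Weyl group $N_{L_{\Pi(h)}}(T)/T$ of the Levi factor, so $w$ has a representative $n'\in N_{L_{\Pi(h)}}(T)$; then $(\ell g)(n')^{-1}\in T\subseteq L_{\Pi(h)}$, whence $\ell g\in L_{\Pi(h)}$ and so $g\in L_{\Pi(h)}$.

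The main obstacle is precisely this passage from Lie algebras to groups, concentrated in the identification $\mathrm{Stab}_W(h)=W_{\Pi(h)}$ for $h\in\mathcal{D}$ (the property that makes $\mathcal{D}$ an honest fundamental domain); everything else is bookkeeping with root spaces and standard structure theory. An alternative to the $W$-stabilizer argument, available because $G$ is simply connected, is to quote Steinberg's theorem that centralizers of semisimple elements in simply connected semisimple groups are connected, which together with $C_G(h)^{\circ}=L_{\Pi(h)}$ concludes the proof at once.
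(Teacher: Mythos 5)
Your proposal is correct, and its first half --- the computation $C_{\mathfrak{g}}(h)=\mathfrak{t}\oplus\bigoplus_{\alpha(h)=0}\mathfrak{g}_{\alpha}$ followed by the real/imaginary-part argument showing $\{\alpha\in\Delta:\alpha(h)=0\}=\Delta_{\Pi(h)}$ --- is exactly the paper's argument. Where you diverge is in the passage from Lie algebras to groups. The paper disposes of this step by citing two connectedness facts: $L_{\Pi(h)}$ is connected because $P_{\Pi(h)}$ is, and $C_G(h)$ is connected by the theorem (due to Steinberg, quoted from Collingwood--McGovern) that centralizers of semisimple elements in a connected, simply-connected semisimple group are connected; equality of Lie algebras then forces equality of the groups. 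You instead prove the reverse inclusion $C_G(h)\subseteq L_{\Pi(h)}$ by hand: reduce a given $g\in C_G(h)$ modulo $L_{\Pi(h)}$ to an element of $N_G(T)$, observe that its Weyl class fixes $h$, and invoke Chevalley's description of Weyl-group stabilizers (applied first to $\mathrm{Re}(h)$ and then to $\mathrm{Im}(h)$ inside the subsystem where the real part vanishes --- this is precisely where the second defining condition of $\mathcal{D}$ earns its keep) to conclude that the class lies in $W_{\Pi(h)}=N_{L_{\Pi(h)}}(T)/T$. This is a genuinely different route: it replaces the black-box appeal to Steinberg's connectedness theorem with an elementary argument that in fact reproves the connectedness of $C_G(h)$ as a byproduct, at the cost of being longer and of requiring the (standard but nontrivial) identification of $\mathrm{Stab}_W(h)$ with the parabolic subgroup $W_{\Pi(h)}$ for $h$ in the fundamental domain. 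Your closing remark that one could instead quote Steinberg's theorem is, in effect, the paper's proof.
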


\begin{proof}
To begin, note that $P_{\Pi(h)}$ is connected (see \cite[Thm. 28.4.2]{Tauvel}). It follows that the quotient $P_{\Pi(h)}/U_{\Pi(h)}\cong L_{\Pi(h)}$ is also connected. At the same time, $C_G(h)$ is known to be connected (see \cite[Thm. 2.3.3]{Collingwood}). Proving $C_G(h)=L_{\Pi(h)}$ therefore amounts to establishing the equality of these groups' respective Lie algebras  $C_{\mathfrak{g}}(H)$ and $\mathfrak{l}_{\Pi(h)}$. To this end, as $\mathfrak{t}$ is abelian and $h\in\mathfrak{t}$, we have $\mathfrak{t}\subseteq C_{\mathfrak{g}}(h)$. It follows that $C_{\mathfrak{g}}(h)$ decomposes as a direct sum of $\mathfrak{t}$ and certain root spaces. To determine these root spaces note that $\alpha\in\Delta$ satisfies $\mathfrak{g}_{\alpha}\subseteq C_{\mathfrak{g}}(h)$ if and only if $\alpha(h)=0$. Hence, $$C_{\mathfrak{g}}(h)=\mathfrak{t}\oplus\bigoplus_{\substack{\alpha\in\Delta\\ \alpha(h)=0}}\mathfrak{g}_{\alpha}.$$ Using this together with \eqref{Equation: Lie algebra of Levi factor}, we are reduced to proving that \begin{equation}\label{Equation: Reduced to proving}
	\Delta_{\Pi(h)}=\{\alpha\in\Delta:\alpha(h)=0\}.
	\end{equation}
	The inclusion $\Delta_{\Pi(h)}\subseteq\{\alpha\in\Delta:\alpha(h)=0\}$ clearly holds. Conversely, suppose that $\alpha\in\Delta$ satisfies $\alpha(h)=0$. There exist simple roots $\alpha_1,\ldots,\alpha_k\in\Pi$ and non-zero integers $n_1,\ldots,n_k\in\mathbb{Z}\setminus\{0\}$ such that \begin{equation}\label{Equation: Linear combination of simple roots}\alpha=\sum_{i=1}^kn_i\alpha_i.\end{equation} Since $\alpha(h)=0$, we have \begin{equation}\label{Equation: Root sum}
	0=\sum_{i=1}^k n_i\alpha_i(h). 
	\end{equation} In particular, $0=\sum_{i=1}^k n_i\text{Re}(\alpha_i(h))$. Now, recall from \eqref{Equation: Fundamental domain} that $\text{Re}(\alpha_i(h))\geq 0$ for all $i$, and note that all $n_i$ are strictly positive or all $n_i$ are strictly negative. It follows that $\text{Re}(\alpha_i(h))=0$ for all $i$. Noting that $h\in\mathcal{D}$, it must be true that $\text{Im}(\alpha_i(h))\geq 0$ for all $i$. Also, taking imaginary parts in \eqref{Equation: Root sum} gives
	$0=\sum_{i=1}^k n_i\text{Im}(\alpha_i(h))$.
	Again using the fact that all $n_i$ are strictly positive or all $n_i$ are strictly negative, it follows that $\text{Im}(\alpha_i(h))=0$ for all $i$. We conclude that $\alpha_i(h)=0$ (ie. $\alpha_i\in\Pi(h)$) for all $i$, which by \eqref{Equation: Linear combination of simple roots} implies $\alpha\in\Delta_{\Pi(h)}$. Having shown \eqref{Equation: Reduced to proving} to hold, our proof is complete. 
\end{proof}

Proposition \ref{Proposition: Stabilizer structure} gives rise to the following more uniform description of the stabilizers of semisimple elements.

\begin{corollary}\label{Corollary: Stabilizers of semisimple elements are reductive}
If $x\in\mathfrak{g}$ is semisimple, then $C_G(x)$ is a Levi factor of a parabolic subgroup of $G$.
\end{corollary}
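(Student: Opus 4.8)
The plan is to reduce to the situation of Proposition \ref{Proposition: Stabilizer structure} by conjugating $x$ into the fundamental domain $\mathcal{D}$. First I would recall from \ref{Subsection: semisimple and nilpotent elements} that a semisimple element satisfies $\Adj_g(x)\in\mathfrak{t}$ for some $g\in G$. As noted at the start of \ref{Subsection: Stabilizer descriptions}, every $W$-orbit in $\mathfrak{t}$ has a (unique) representative in $\mathcal{D}$, and $W=N_G(T)/T$ acts on $\mathfrak{t}$ through the restriction of $\Adj$; composing $g$ with a representative in $N_G(T)$ of the appropriate Weyl group element therefore produces $g_0\in G$ with $h:=\Adj_{g_0}(x)\in\mathcal{D}$.

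Next I would invoke the elementary identity $C_G(\Adj_{g_0}(x))=g_0\,C_G(x)\,g_0^{-1}$, which is immediate from the definitions of the adjoint action and of the stabilizer. Combining this with Proposition \ref{Proposition: Stabilizer structure}, which gives $C_G(h)=L_{\Pi(h)}$, yields $C_G(x)=g_0^{-1}L_{\Pi(h)}\,g_0$. It then remains to recognise this group as a Levi factor of a parabolic subgroup. Conjugating the Levi decomposition $P_{\Pi(h)}=U_{\Pi(h)}\rtimes L_{\Pi(h)}$ by $g_0^{-1}$ produces a semidirect product decomposition $g_0^{-1}P_{\Pi(h)}\,g_0=(g_0^{-1}U_{\Pi(h)}\,g_0)\rtimes(g_0^{-1}L_{\Pi(h)}\,g_0)$; since conjugation preserves unipotence and normality, $g_0^{-1}U_{\Pi(h)}\,g_0$ is the unipotent radical of $g_0^{-1}P_{\Pi(h)}\,g_0$, so the complementary reductive factor $g_0^{-1}L_{\Pi(h)}\,g_0$ is indeed a Levi factor. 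Finally, $g_0^{-1}P_{\Pi(h)}\,g_0$ is parabolic because it contains $g_0^{-1}B\,g_0$ (recall $B\subseteq P_{\Pi(h)}$), a conjugate of the Borel subgroup $B$, and by definition a parabolic subgroup of $G$ is one containing a conjugate of $B$.

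The main obstacle I anticipate is not a genuine difficulty but a matter of bookkeeping: one must be careful that the conjugate of $U_{\Pi(h)}$ really is the unipotent radical of the conjugated parabolic, so that the term ``Levi factor'' is correctly applied, and that the reduction $x\rightsquigarrow h\in\mathcal{D}$ is fully justified by the parametrization of semisimple orbits recalled in \ref{Subsection: Stabilizer descriptions}. Once these points are in place, the corollary follows directly from Proposition \ref{Proposition: Stabilizer structure}.
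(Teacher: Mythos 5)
Your proposal is correct and follows essentially the same route as the paper: conjugate the semisimple element into $\mathcal{D}$, apply Proposition \ref{Proposition: Stabilizer structure}, and observe that $C_G(x)$ is the corresponding conjugate of $L_{\Pi(h)}$, hence a Levi factor of the conjugated parabolic. The extra bookkeeping you supply (that conjugation preserves the unipotent radical and the parabolic property) is a harmless elaboration of what the paper leaves implicit.
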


\begin{proof}
Since $x$ is semisimple, $x=\Adj_g(h)$ for some $g\in G$ and $h\in\mathcal{D}$. It follows that $C_G(x)=gC_G(h)g^{-1}$, which by Proposition \ref{Proposition: Stabilizer structure} is precisely $gL_{\Pi(h)}g^{-1}$. This is a Levi factor of the parabolic subgroup $gP_{\Pi(h)}g^{-1}$.
\end{proof}

We conclude this section with a proof of the existence of regular adjoint orbits, as promised in \ref{Subsection: Dimension and regularity}. 

\begin{corollary}\label{Corollary: The existence of regular orbits}
There exist regular adjoint orbits in $\mathfrak{g}$.
\end{corollary}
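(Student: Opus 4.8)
The plan is to produce a single regular adjoint orbit explicitly, namely the orbit of a regular semisimple element of $\mathfrak{t}$. The first step is to find $h\in\mathfrak{t}$ with $\alpha(h)\neq 0$ for every root $\alpha\in\Delta$. For each $\alpha\in\Delta$, the set $\ker(\alpha)=\{h\in\mathfrak{t}:\alpha(h)=0\}$ is a proper linear subspace of $\mathfrak{t}$, since $\alpha$ is a nonzero element of $\mathfrak{t}^*$. As $\Delta$ is finite and a vector space over the infinite field $\mathbb{C}$ is never a union of finitely many proper subspaces, the complement $\mathfrak{t}\setminus\bigcup_{\alpha\in\Delta}\ker(\alpha)$ is nonempty; fix $h$ in it.

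Next I would compute $C_{\mathfrak{g}}(h)$. Using the root space decomposition \eqref{Equation: Root space decomposition}, write an arbitrary $y\in\mathfrak{g}$ as $y=y_0+\sum_{\alpha\in\Delta}y_\alpha$ with $y_0\in\mathfrak{t}$ and $y_\alpha\in\mathfrak{g}_\alpha$. Then $[h,y]=\sum_{\alpha\in\Delta}\alpha(h)\,y_\alpha$, and since each coefficient $\alpha(h)$ is nonzero, this vanishes precisely when every $y_\alpha=0$. Hence $C_{\mathfrak{g}}(h)=\mathfrak{t}$, so $\dim C_G(h)=\dim\mathfrak{t}=\text{rank}(G)$. (Alternatively, after replacing $h$ with a $W$-conjugate lying in $\mathcal{D}$ of \eqref{Equation: Fundamental domain}, which preserves the condition $\alpha(h)\neq 0$ for all $\alpha$, one has $\Pi(h)=\emptyset$, and Proposition \ref{Proposition: Stabilizer structure} gives $C_G(h)=L_{\emptyset}=T$ directly, with $\dim T=\text{rank}(G)$.)

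Finally, the orbit--stabilizer isomorphism \eqref{Equation: Orbit-stabilizer isomorphism} yields $\dim\mathcal{O}(h)=\dim(G/C_G(h))=\dim(G)-\text{rank}(G)$, so $\mathcal{O}(h)$ is a regular adjoint orbit, as required. The only non-formal ingredient in this argument is the existence of the regular semisimple element $h$, which reduces to the elementary fact that a complex vector space is not a finite union of proper subspaces; the remaining steps are immediate consequences of the root space decomposition (or of Proposition \ref{Proposition: Stabilizer structure}) together with material already developed above, so I do not expect any serious obstacle.
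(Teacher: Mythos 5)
Your proof is correct and follows essentially the same route as the paper: exhibit an element $h\in\mathfrak{t}$ on which no root vanishes, so that $C_{\mathfrak{g}}(h)=\mathfrak{t}$ has dimension $\mathrm{rank}(G)$, and conclude via the orbit--stabilizer isomorphism. The only difference is cosmetic --- the paper takes the explicit element $h=\sum_{\alpha\in\Pi}\epsilon_{\alpha}$ of Example \ref{Example: Kostant triple} and invokes Proposition \ref{Proposition: Stabilizer structure} to get $C_G(h)=T$, whereas you produce a generic regular semisimple element (via the finite-union-of-proper-subspaces argument) and compute the centralizer directly from the root space decomposition, which is equally valid and, as your parenthetical remark shows, reduces to the paper's argument anyway.
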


\begin{proof}
Let $\{\epsilon_{\alpha}\}_{\alpha\in\Pi}$ and $h\in\mathfrak{t}$ be as defined in Example \ref{Example: Kostant triple}. Since $\alpha(h)=2$ for all $\alpha\in\Pi$, we see that $h\in\mathcal{D}$ and $\Pi(h)=\emptyset$. It follows that $\mathfrak{l}_{\Pi(h)}=\mathfrak{t}$ (see \eqref{Equation: Lie algebra of Levi factor}), which together with the connectedness of $L_{\Pi(h)}$ (established in the proof of Proposition \ref{Proposition: Stabilizer structure}) implies that $L_{\Pi(h)}=T$. Now the orbit-stabilizer isomorphism \eqref{Equation: semisimple orbit-stabilizer isomorphism} and Proposition \ref{Proposition: Stabilizer structure} give
$$\dim(\mathcal{O}(h))=\dim(G/T)=\dim(G)-\dim(T)=\dim(G)-\text{rank}(G).$$ In other words, $\mathcal{O}(h)$ is a regular adjoint orbit. 
\end{proof}

\subsection{Fibrations over partial flag varieties}\label{Subsection: Fibrations over partial flag varieties}
Consider a point $h\in\mathcal{D}$, to be regarded as fixed throughout \ref{Subsection: Fibrations over partial flag varieties}, and recall the variety isomorphism $O(h)\cong G/C_G(h)$ in \eqref{Equation: semisimple orbit-stabilizer isomorphism}. Proposition \ref{Proposition: Stabilizer structure} allows us to write the right-hand-side of this isomorphism as $G/L_{\Pi(h)}$, and the inclusion $L_{\Pi(h)}\subseteq P_{\Pi(h)}$ gives rise to a fibration $U_{\Pi(h)}\rightarrow G/L_{\Pi(h)}\rightarrow G/P_{\Pi(h)}$. In other words, we have a canonical fibre bundle 
\begin{equation}\label{Equation: Affine bundle}
U_{\Pi(h)}\rightarrow \mathcal{O}(h)\xrightarrow{\pi_h} G/P_{\Pi(h)},
\end{equation}
where $\pi_h(\text{Ad}_g(h))=[g]\in G/P_{\Pi(h)}$ for all $g\in G$.

The fibre $U_{\Pi(h)}$ is unipotent, so that the exponential map restricts to a variety isomorphism $\mathfrak{u}_{\Pi(h)}\xrightarrow{\cong} U_{\Pi(h)}$ (see \cite[Chapt. VIII, Thm. 1.1]{Hochschild}). It follows that the fibres of \eqref{Equation: Affine bundle} are affine spaces, raising the issue of whether $\mathcal{O}(h)$ is isomorphic to the total space of a vector bundle over $G/P_{\Pi(h)}$. Perhaps surprisingly, one cannot expect an isomorphism on the level of algebraic varieties. The total space of a vector bundle over $G/P_{\Pi(h)}$ contains $G/P_{\Pi(h)}$ as the zero-section, and one knows $G/P_{\Pi(h)}$ to be irreducible (since $G$ is irreducible) and projective (see \cite[Cor. 28.1.4]{Tauvel}). It follows that the zero-section is a closed, irreducible, projective subvariety. However, the only a such subvarieties of the affine variety $\mathcal{O}(h)$ are the singletons. A similar argument precludes the existence of a biholomorphism, as a compact, connected, complex submanifold of the Stein manifold $\mathcal{O}(h)$ is necessarily a point.\footnote{For the definitions and results necessary to make this argument, we refer the reader to \cite[Chapt. V, Sect. 1]{Fritzsche}.} Nevertheless, it turns out that there is a diffeomorphism between $\mathcal{O}(h)$ and (the total space of) $T^*(G/P_{\Pi(h)})$ that respects fibrations over $G/P_{\Pi(h)}$. We devote the balance of \ref{Subsection: Fibrations over partial flag varieties} to an explicit construction of this diffeomorphism. 

\begin{remark}\label{Remark: Connection to hyperkahler geometry}
By invoking well-known facts from hyperk\"{a}hler geometry, one can deduce that $\mathcal{O}(h)$ and $T^*(G/P_{\Pi(h)})$ are diffeomorphic without having to construct a diffeomorphism. To this end, recall that $\mathcal{O}(h)$ is a hyperk\"{a}hler manifold (see \ref{Subsection: Geometric features}). Among other things, it follows that the underlying smooth manifold $\mathcal{O}(h)$ carries a family of complex structures. This family turns out to include the usual complex structure on $\mathcal{O}(h)$ (ie. the one it inherits as a subvariety of $\mathfrak{g}$), as well as one in which $\mathcal{O}(h)$ becomes biholomorphic to $T^*(G/P_{\Pi(h)})$ (see \cite[Thm. 2]{Biquard}). One concludes that $\mathcal{O}(h)$ and $T^*(G/P_{\Pi(h)})$ must be diffeomorphic. While this approach is elegant, it is difficult to extract an explicit diffeomorphism $\mathcal{O}(h)\cong T^*(G/P_{\Pi(h)})$ from the supporting arguments.     
\end{remark}

Let $K\subseteq G$ be a maximal compact subgroup having $K\cap T$ as a maximal real torus. We shall assume the Lie algebra $\mathfrak{k}$ of $K$ to have the form 
\begin{equation}\label{Equation: Lie algebra of maximal compact subgroup}
\mathfrak{k}=(\mathfrak{k}\cap\mathfrak{t})\oplus\left(\bigoplus_{\alpha\in\Delta_{+}}\text{span}_{\mathbb{R}}\{e_{\alpha}-e_{-\alpha}\}\right)\oplus\left(\bigoplus_{\alpha\in\Delta_{+}}\text{span}_{\mathbb{R}}\{i(e_{\alpha}+e_{-\alpha})\}\right),
\end{equation}
where $e_{\alpha}\in\mathfrak{g}_{\alpha}\setminus\{0\}$ and $e_{-\alpha}\in\mathfrak{g}_{-\alpha}\setminus\{0\}$ for each $\alpha\in\Delta_+$ (cf. the proof of Theorem 6.11 in \cite{Knapp}).
Now note that $K$ acts transitively on $G/P_{\Pi(h)}$ by left multiplication (see \cite[Chapt. 5, Sect. 8]{Atiyah}), and that the $K$-stabilizer of the identity coset $[e]\in G/P_{\Pi(h)}$ is $K\cap P_{\Pi(h)}=:K_{\Pi(h)}$. By the orbit-stabilizer isomorphism \eqref{Equation: Orbit-stabilizer isomorphism}, this time for compact Lie group actions, we have a $K$-equivariant diffeomorphism. 
\begin{equation}\label{Equation: Equivariant diffeomorphism}
K/K_{\Pi(h)}\xrightarrow{\cong} G/P_{\Pi(h)}.
\end{equation}

We will later need the following result concerning $K_{\Pi(h)}$.

\begin{lemma}\label{Lemma: K-stabilizer}
We have $K_{\Pi(h)}\subseteq L_{\Pi(h)}$.
\end{lemma}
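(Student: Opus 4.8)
The plan is to reduce the statement to showing that every $g\in K_{\Pi(h)}=K\cap P_{\Pi(h)}$ satisfies $\Adj_g(h)=h$. Since $C_G(h)=L_{\Pi(h)}$ by Proposition \ref{Proposition: Stabilizer structure}, this equality immediately forces $g\in L_{\Pi(h)}$. I would establish $\Adj_g(h)=h$ in two steps: first a ``parabolic'' step showing $\Adj_g(h)-h$ lies in the nilradical $\mathfrak u_{\Pi(h)}$, and then a ``compactness'' step showing this difference vanishes.

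For the first step, I would write $g=u\ell$ according to the Levi decomposition \eqref{Equation: Levi decomposition of parabolic}, with $u\in U_{\Pi(h)}$ and $\ell\in L_{\Pi(h)}$. As $\ell\in L_{\Pi(h)}=C_G(h)$ we have $\Adj_\ell(h)=h$, hence $\Adj_g(h)=\Adj_u(h)$. Writing $u=\exp n$ with $n\in\mathfrak u_{\Pi(h)}$ (using that $\exp$ restricts to an isomorphism $\mathfrak u_{\Pi(h)}\xrightarrow{\cong}U_{\Pi(h)}$), one gets $\Adj_u(h)-h=\sum_{k\geq 1}\frac{1}{k!}(\adj_n)^k(h)$. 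Now $(\adj_n)(h)=[n,h]\in[\mathfrak u_{\Pi(h)},\mathfrak t]\subseteq\mathfrak u_{\Pi(h)}$, because $[\mathfrak g_\alpha,\mathfrak t]\subseteq\mathfrak g_\alpha$ for every root $\alpha$; since $\mathfrak u_{\Pi(h)}$ is a Lie subalgebra, it follows that $(\adj_n)^k(h)\in\mathfrak u_{\Pi(h)}$ for all $k\geq 1$. Thus $\Adj_g(h)=h+v$ with $v:=\Adj_u(h)-h\in\mathfrak u_{\Pi(h)}$.

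For the second step, I would fix an inner product $(\cdot,\cdot)$ on $\mathfrak g$ (viewed as a real vector space) that is invariant under $\Adj(K)$; such a form exists because $K$ is compact. With respect to it, $h\in\mathfrak t$ is orthogonal to every root space $\mathfrak g_\alpha$ — already as a representation of the maximal torus $K\cap T\subseteq K$, since $\mathfrak t$ is the zero weight space while each $\mathfrak g_\alpha$ carries a nonzero weight — and hence $h\perp\mathfrak u_{\Pi(h)}$. Because $g\in K$, the operator $\Adj_g$ is orthogonal, so $\lVert h\rVert^2=\lVert\Adj_g(h)\rVert^2=\lVert h+v\rVert^2=\lVert h\rVert^2+\lVert v\rVert^2$, whence $v=0$. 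Therefore $\Adj_g(h)=h$, so $g\in C_G(h)=L_{\Pi(h)}$, proving $K_{\Pi(h)}\subseteq L_{\Pi(h)}$.

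The only non-formal ingredient is the orthogonality $h\perp\mathfrak u_{\Pi(h)}$ for an $\Adj(K)$-invariant inner product, and this is the point I would be most careful about; it is a standard consequence of the weight-space decomposition of $\mathfrak g$ under the compact torus $K\cap T$, and is in any case essentially packaged in the description \eqref{Equation: Lie algebra of maximal compact subgroup} of $\mathfrak k$. Everything else is bookkeeping with the Levi decomposition and the exponential series.
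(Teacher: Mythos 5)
Your proof is correct, but it takes a genuinely different route from the paper's. The paper first establishes that $K_{\Pi(h)}$ is connected (via the fibration $K_{\Pi(h)}\rightarrow K\rightarrow K/K_{\Pi(h)}\cong G/P_{\Pi(h)}$ and the long exact sequence of homotopy groups, using simple-connectedness of $G/P_{\Pi(h)}$), thereby reducing the claim to the Lie algebra inclusion $\mathfrak{k}_{\Pi(h)}\subseteq\mathfrak{l}_{\Pi(h)}$, which it then verifies by an explicit root-space computation with the description \eqref{Equation: Lie algebra of maximal compact subgroup} of $\mathfrak{k}$. You instead argue directly at the group level: every $g\in K\cap P_{\Pi(h)}$ satisfies $\Adj_g(h)=h$, hence lies in $C_G(h)=L_{\Pi(h)}$ by Proposition \ref{Proposition: Stabilizer structure}. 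Your two steps are both sound: the Levi factorization $g=u\ell$ together with $\Adj_{\exp n}=\exp(\adj_n)$ and $[\mathfrak{u}_{\Pi(h)},\mathfrak{t}]\subseteq\mathfrak{u}_{\Pi(h)}$ places $\Adj_g(h)-h$ in $\mathfrak{u}_{\Pi(h)}$; and the orthogonality $h\perp\mathfrak{u}_{\Pi(h)}$ for an $\Adj(K)$-invariant inner product does hold, since $K\cap T$ is Zariski dense in $T$, so each $\alpha$ restricts to a nontrivial character of the compact torus and the root spaces are orthogonal to the fixed subspace $\mathfrak{t}$ (averaging $\alpha$ over $K\cap T$ gives zero). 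The Pythagorean identity then kills the nilradical component. What each approach buys: yours avoids both the homotopy-theoretic connectedness argument and the explicit presentation of $\mathfrak{k}$, using only compactness of $K$ and the density of $K\cap T$ in $T$ — it is essentially the standard argument that $K\cap P$ equals $K\cap L$ for a parabolic with Levi factor $L$, and it proves the stronger statement $K\cap P_{\Pi(h)}\subseteq C_G(h)$ without any connectedness hypothesis. The paper's version, by contrast, stays entirely within the root-space bookkeeping already set up in Section \ref{Subsection: Fibrations over partial flag varieties} and reuses the explicit form of $\mathfrak{k}$ that is needed there anyway. One minor caution: your closing remark that the orthogonality is ``packaged in'' \eqref{Equation: Lie algebra of maximal compact subgroup} is a little loose — that formula alone does not hand you an invariant inner product — but the weight-space argument you actually give is self-contained and suffices.
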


\begin{proof}
We begin by showing $K_{\Pi(h)}$ to be connected. To this end, consider the fibration $K_{\Pi(h)}\rightarrow K\rightarrow K/K_{\Pi(h)}$. The base is simply-connected by virtue of being diffeomorphic to $G/P_{\Pi(h)}$, a simply-connected space (see \cite[Chapt. 3, Sect. 1, Prop. 7]{Akhiezer}). The total space $K$ is homotopy-equivalent to $G$ (see \cite[Appendix A, Thm. 1.1]{Whitehead}), so that the connectedness of $K$ follows from that of $G$. Using these two observations together with the long exact sequence of homotopy groups associated to our fibration (see \cite[Chapt. IV, Cor. 8.6]{Whitehead}), we see that $K_{\Pi(h)}$ is also connected. It will therefore suffice to prove that $\mathfrak{k}_{\Pi(h)}\subseteq\mathfrak{l}_{\Pi(h)}$, where $\mathfrak{k}_{\Pi(h)}$ and $\mathfrak{l}_{\Pi(h)}$ are the Lie algebras of $K_{\Pi(h)}$ and $L_{\Pi(h)}$, respectively. The former Lie algebra is $\mathfrak{k}\cap\mathfrak{p}_{\Pi(h)}$, which via \eqref{Equation: Lie algebra of standard parabolic} and \eqref{Equation: Lie algebra of maximal compact subgroup} can be shown to coincide with $$(\mathfrak{k}\cap\mathfrak{t})\oplus\left(\bigoplus_{\alpha\in\Delta_{\Pi(h)}^{+}}\text{span}_{\mathbb{R}}\{e_{\alpha}-e_{-\alpha}\}\right)\oplus\left(\bigoplus_{\alpha\in\Delta_{\Pi(h)}^{+}}\text{span}_{\mathbb{R}}\{i(e_{\alpha}+e_{-\alpha})\}\right).$$ Since $\mathfrak{t}\subseteq\mathfrak{l}_{\Pi(h)}$ and $\mathfrak{g}_{\alpha}\oplus\mathfrak{g}_{-\alpha}\subseteq\mathfrak{l}_{\Pi(h)}$ for all $\alpha\in\Delta_{\Pi(h)}^+$ (see \ref{Equation: Lie algebra of Levi factor}), it follows that $\mathfrak{k}_{\Pi(h)}\subseteq\mathfrak{l}_{\Pi(h)}$.
\end{proof}

Let us return to the main discussion. Note that the pullback of $T^*(G/P_{\Pi(h)})$ under \eqref{Equation: Equivariant diffeomorphism} is a smooth complex vector bundle $E\rightarrow K/K_{\Pi(h)}$, so that we have the commutative diagram \begin{equation}\label{Equation: Commutative diagram}
\begin{tikzcd}[column sep=30pt]
E \ar{d} \ar{r}{\cong} & T^*(G/P_{\Pi(h)}) \ar{d} \\
K/K_{\Pi(h)} \ar{r}{\cong} & G/P_{\Pi(h)}
\end{tikzcd}
\end{equation}
One can show $E$ to be a $K$-equivariant vector bundle, meaning that $K$ acts smoothly on $E$ via a lift of the left multiplication action on $K/K_{\Pi(h)}$ (hence $k\in K$ sends the fibre over $x\in K/K_{\Pi(h)}$ to the fibre over $k\cdot x$), and secondly that restricting the action of $k\in K$ to the fibre over $x\in K/K_{\Pi(h)}$ gives a vector space isomorphism with the fibre over $k\cdot x$. Since the identity coset $[e]\in K/K_{\Pi(h)}$ is fixed by $K_{\Pi(h)}$, this second condition implies that the fibre over $[e]$ in any $K$-equivariant vector bundle is a complex $K_{\Pi(h)}$-representation. In fact, this turns out to define a bijective correspondence between isomorphism classes of (smooth) complex $K_{\Pi(h)}$-representations and isomorphism classes of smooth $K$-equivariant complex vector bundles over $K/K_{\Pi(h)}$. The inverse process assigns to each representation $\varphi:K_{\Pi(h)}\rightarrow\GL(V)$ the \textit{associated bundle} $K\times_{K_{\Pi(h)}}V$, defined as follows. The group $K_{\Pi(h)}$ acts freely on $K\times V$ via $g\cdot(k,v)=(kg^{-1},\varphi(g)v)$, $g\in K_{\Pi(h)}$, $k\in K$, $v\in V$. One defines $K\times_{K_{\Pi(h)}}V$ to be the quotient manifold $(K\times V)/K_{\Pi(h)}$, equipped with the projection to $K/K_{\Pi(h)}$ given by $[(k,v)]\mapsto[k]$. The fibres of this projection are naturally complex vector spaces, and the $K$-equivariant structure comes from the left-multiplicative action of $K$ on the first factor of $K\times_{K_{\Pi(h)}}V$.

With the above discussion in mind, it is natural to realize $E$ as an associated bundle. To this end, $U_{\Pi(h)}$ is a normal subgroup of $P_{\Pi(h)}$ by virtue of the former being the unipotent radical of the latter. It follows that $\mathfrak{u}_{\Pi(h)}$ is invariant under $P_{\Pi(h)}$ (hence also the subgroup $K_{\Pi(h)}$), acting through the adjoint representation of $G$. In particular, $\mathfrak{u}_{\Pi(h)}$ is a $K_{\Pi(h)}$-representation.

\begin{lemma}\label{Lemma: The fibre of E over [e]}
There is an isomorphism $E\cong K\times_{K_{\Pi(h)}}\mathfrak{u}_{\Pi(h)}$ of $K$-equivariant vector bundles over $K/K_{\Pi(h)}$.
\end{lemma}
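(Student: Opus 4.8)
The plan is to invoke the correspondence recalled just above, between isomorphism classes of complex $K_{\Pi(h)}$-representations and isomorphism classes of smooth $K$-equivariant complex vector bundles over $K/K_{\Pi(h)}$. Under this correspondence the associated bundle $K\times_{K_{\Pi(h)}}\mathfrak{u}_{\Pi(h)}$ corresponds to the $K_{\Pi(h)}$-representation $\mathfrak{u}_{\Pi(h)}$ itself. Since $E$ is a $K$-equivariant bundle, the lemma will therefore follow once we exhibit a $K_{\Pi(h)}$-equivariant isomorphism between $\mathfrak{u}_{\Pi(h)}$ and the fibre of $E$ over the identity coset $[e]\in K/K_{\Pi(h)}$.

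To identify that fibre, recall that $E$ is the pullback of $T^*(G/P_{\Pi(h)})$ along the $K$-equivariant diffeomorphism \eqref{Equation: Equivariant diffeomorphism}, which carries $[e]\in K/K_{\Pi(h)}$ to $[e]\in G/P_{\Pi(h)}$; consequently the fibre of $E$ over $[e]$ is canonically the cotangent space $T^*_{[e]}(G/P_{\Pi(h)})$, and the $K_{\Pi(h)}$-action on it is the restriction to $K_{\Pi(h)}$ of the $P_{\Pi(h)}$-action induced by the $G$-action on $T^*(G/P_{\Pi(h)})$. Using the isomorphism $T_{[e]}(G/P_{\Pi(h)})\cong\mathfrak{g}/\mathfrak{p}_{\Pi(h)}$ of $P_{\Pi(h)}$-representations (cf. \ref{Subsection: Orbits as quotient varieties} and Example \ref{Example: Isotropy representations}) and dualizing, the fibre of $E$ over $[e]$ is $(\mathfrak{g}/\mathfrak{p}_{\Pi(h)})^*$ as a $K_{\Pi(h)}$-representation.

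It then remains to produce a $K_{\Pi(h)}$-equivariant isomorphism $(\mathfrak{g}/\mathfrak{p}_{\Pi(h)})^*\cong\mathfrak{u}_{\Pi(h)}$, and this is the one point requiring computation rather than bookkeeping. I would use the Killing form, which gives a $G$-equivariant isomorphism $\mathfrak{g}\xrightarrow{\cong}\mathfrak{g}^*$ (see \ref{Subsection: Geometric features}); restricting to $K_{\Pi(h)}$ and passing to annihilators, this identifies $(\mathfrak{g}/\mathfrak{p}_{\Pi(h)})^*$ with the Killing-orthogonal complement $\mathfrak{p}_{\Pi(h)}^{\perp}\subseteq\mathfrak{g}$. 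Since the Killing form pairs $\mathfrak{t}$ non-degenerately with itself and $\mathfrak{g}_\alpha$ with $\mathfrak{g}_{-\alpha}$, while annihilating $\mathfrak{g}_\alpha\otimes\mathfrak{g}_\beta$ whenever $\beta\neq-\alpha$, the description \eqref{Equation: Lie algebra of standard parabolic} of $\mathfrak{p}_{\Pi(h)}$ gives $\mathfrak{p}_{\Pi(h)}^{\perp}=\bigoplus_{\alpha\in\Delta_+\setminus\Delta_{\Pi(h)}^+}\mathfrak{g}_\alpha=\mathfrak{u}_{\Pi(h)}$ by \eqref{Equation: Lie algebra of unipotent radical}. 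As $\mathfrak{u}_{\Pi(h)}$ is $P_{\Pi(h)}$-invariant (being the Lie algebra of the normal subgroup $U_{\Pi(h)}$), the whole chain of identifications is $P_{\Pi(h)}$-equivariant and in particular $K_{\Pi(h)}$-equivariant, which finishes the proof. The main obstacle is precisely this last step: the orthogonal-complement computation together with the care needed to see that the Killing-form identification is equivariant. Everything before it is formal once the cotangent-bundle pullback and the associated-bundle correspondence are in hand.
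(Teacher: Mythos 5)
Your proposal is correct and follows essentially the same route as the paper's proof: reduce to identifying the fibre over $[e]$ as a $K_{\Pi(h)}$-representation, identify it with $(\mathfrak{g}/\mathfrak{p}_{\Pi(h)})^*$, and then use the Killing form to match this with $\mathfrak{u}_{\Pi(h)}$. The only difference is that you spell out the orthogonal-complement computation and the equivariance check, both of which the paper leaves to the reader.
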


\begin{proof}
Given our discussion of the correspondence between isomorphism classes of $K$-equivariant vector bundles and those of $K_{\Pi(h)}$-representations, it will suffice to prove that $\mathfrak{u}_{\Pi(h)}$ is isomorphic to the fibre of $E$ over $[e]\in K/K_{\Pi(h)}$ as a $K_{\Pi(h)}$-representation. The latter is the dual of the tangent space to $[e]\in G/P_{\Pi(h)}$, which by \eqref{Equation: Tangent space description} (with $\mathcal{O}=G/P_{\Pi(h)}$) is isomorphic to $(\mathfrak{g}/\mathfrak{p}_{\Pi(h)})^*$. One can identify $(\mathfrak{g}/\mathfrak{p}_{\Pi(h)})^*$ with $\{\phi\in\mathfrak{g}^*:\phi\vert_{\mathfrak{p}_{\Pi(h)}}=0\}$, which under our isomorphism $\mathfrak{g}^*\cong\mathfrak{g}$ corresponds to $\{x\in\mathfrak{g}:\langle x,y\rangle=0\text{ for all }y\in\mathfrak{p}_{\Pi(h)}\}$. This last subspace can be seen to coincide with $\mathfrak{u}_{\Pi(h)}$, as desired. We leave it as an exercise for the reader to verify that each of the above-constructed isomorphisms is an isomorphism of $P_{\Pi(h)}$ (hence also $K_{\Pi(h)}$-) representations.    
\end{proof} 

Using Lemma \ref{Lemma: The fibre of E over [e]}, we may present \eqref{Equation: Commutative diagram} as 

\begin{equation}\label{Equation: Second commutative diagram}
\begin{tikzcd}[column sep=30pt]
K\times_{K_{\Pi(h)}}\mathfrak{u}_{\Pi(h)} \ar{d} \ar{r}{\cong} & T^*(G/P_{\Pi(h)}) \ar{d} \\
K/K_{\Pi(h)} \ar{r}{\cong} & G/P_{\Pi(h)}.
\end{tikzcd}
\end{equation} 

\begin{proposition}\label{Proposition: Diffeomorphism with the cotangent bundle}
There is a $K$-equivariant diffeomorphism $T^*(G/P_{\Pi(h)})\xrightarrow{\cong}\mathcal{O}(h)$ for which the following diagram commutes: 
 \begin{equation}\label{Equation: Diffeomorphism with cotangent bundle}
 \begin{tikzcd}[column sep=10pt]
 T^*(G/P_{\Pi(h)}) \ar{rr}{\cong}  \ar{dr} & & \mathcal{O}(h) \ar{dl}{\pi_h} \\
 & G/P_{\Pi(h)} &
 \end{tikzcd}.
 \end{equation}
\end{proposition}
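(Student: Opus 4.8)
The plan is to obtain the diffeomorphism by realizing both sides as fibre bundles over $G/P_{\Pi(h)}$ and writing down an explicit map between them. Using the identification $K\times_{K_{\Pi(h)}}\mathfrak u_{\Pi(h)}\cong T^*(G/P_{\Pi(h)})$ of \eqref{Equation: Second commutative diagram} (Lemma \ref{Lemma: The fibre of E over [e]}), it suffices to produce a $K$-equivariant diffeomorphism $\Psi\colon K\times_{K_{\Pi(h)}}\mathfrak u_{\Pi(h)}\xrightarrow{\cong}\mathcal O(h)$ over $G/P_{\Pi(h)}$. I would define
\[
\Psi\bigl([(k,X)]\bigr):=\Adj_{k\exp(X)}(h),\qquad k\in K,\ X\in\mathfrak u_{\Pi(h)},
\]
where $\exp\colon\mathfrak u_{\Pi(h)}\xrightarrow{\cong}U_{\Pi(h)}$ is the exponential isomorphism from \eqref{Equation: Affine bundle}. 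The first and conceptually crucial point is that $\Psi$ is well defined on the quotient: for $g\in K_{\Pi(h)}$ one has $\exp(\Adj_g X)=g\exp(X)g^{-1}$, so $kg^{-1}\exp(\Adj_gX)=k\exp(X)g^{-1}$, and since $K_{\Pi(h)}\subseteq L_{\Pi(h)}=C_G(h)$ by Lemma \ref{Lemma: K-stabilizer} together with Proposition \ref{Proposition: Stabilizer structure}, the element $g^{-1}$ fixes $h$; hence $\Adj_{kg^{-1}\exp(\Adj_gX)}(h)=\Adj_{k\exp(X)}(h)$. This is exactly where Lemma \ref{Lemma: K-stabilizer} is indispensable, and it is the reason one passes through the compact group $K$ rather than attempting to build the map over $P_{\Pi(h)}$, which does not fix $h$.

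Granting well-definedness, smoothness of $\Psi$ is immediate since it descends from the smooth map $(k,X)\mapsto\Adj_{k\exp(X)}(h)$ on $K\times\mathfrak u_{\Pi(h)}$, and $K$-equivariance follows from $\Adj_{k'k\exp(X)}(h)=\Adj_{k'}\bigl(\Adj_{k\exp(X)}(h)\bigr)$ (left multiplication on the first factor upstairs, adjoint action on $\mathcal O(h)$). Compatibility with the projections to $G/P_{\Pi(h)}$ is also clear: $\exp(X)\in U_{\Pi(h)}\subseteq P_{\Pi(h)}$ gives $\pi_h(\Psi([(k,X)]))=[k\exp(X)]=[k]$, which is the image of $[(k,X)]$ under $K\times_{K_{\Pi(h)}}\mathfrak u_{\Pi(h)}\to K/K_{\Pi(h)}\xrightarrow{\eqref{Equation: Equivariant diffeomorphism}}G/P_{\Pi(h)}$. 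Thus $\Psi$ is a smooth bundle map covering the diffeomorphism \eqref{Equation: Equivariant diffeomorphism}.

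It remains to show $\Psi$ is a diffeomorphism, and this is the step requiring the most care, since a smooth bijection of equidimensional manifolds need not be a diffeomorphism. I would reduce to the fibres: a smooth, fibrewise-diffeomorphic bundle map covering a diffeomorphism of the base is a diffeomorphism of total spaces, because in a common local trivialization it has the form $(b,v)\mapsto(b,\psi_b(v))$ with each $\psi_b$ a diffeomorphism depending smoothly on $b$, whose differential is visibly invertible. By $K$-equivariance and transitivity of $K$ on $G/P_{\Pi(h)}$ it is enough to check the fibre over $[e]$, where $\Psi$ becomes $X\mapsto\Adj_{\exp(X)}(h)$, the composite of the isomorphism $\exp\colon\mathfrak u_{\Pi(h)}\xrightarrow{\cong}U_{\Pi(h)}$ with $u\mapsto\Adj_u(h)$; the latter is precisely the trivialization of the fibre of \eqref{Equation: Affine bundle} over $[e]$ — it is injective since $U_{\Pi(h)}\cap L_{\Pi(h)}=\{e\}$ (using $P_{\Pi(h)}=U_{\Pi(h)}\rtimes L_{\Pi(h)}$) and surjects onto $\pi_h^{-1}([e])=\{\Adj_u(h):u\in U_{\Pi(h)}\}$. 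Hence $\Psi$ is a $K$-equivariant diffeomorphism. Finally, composing $\Psi^{-1}$ with the isomorphisms $T^*(G/P_{\Pi(h)})\cong E\cong K\times_{K_{\Pi(h)}}\mathfrak u_{\Pi(h)}$ of \eqref{Equation: Second commutative diagram} yields the desired $K$-equivariant diffeomorphism; since the base map of $\Psi$ is the inverse of the base maps in \eqref{Equation: Second commutative diagram}, the composite covers the identity on $G/P_{\Pi(h)}$, so the triangle \eqref{Equation: Diffeomorphism with cotangent bundle} commutes. Everything apart from well-definedness and the fibrewise diffeomorphism check is formal bookkeeping.
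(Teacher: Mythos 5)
Your proof is correct and lands on exactly the map the paper uses, $[(k,x)]\mapsto\Adj_{k\exp(x)}(h)$, with Lemma \ref{Lemma: K-stabilizer} playing the same pivotal role (for you it guarantees well-definedness on the quotient; in the paper it guarantees $K_{\Pi(h)}$-equivariance of $x\mapsto[\exp(x)]$). The one place you genuinely diverge is in certifying that this map is a diffeomorphism: the paper assembles it as a composite of three known diffeomorphisms, the key one being $K\times_{K_{\Pi(h)}}(P_{\Pi(h)}/L_{\Pi(h)})\rightarrow G/L_{\Pi(h)}$, $[(k,[p])]\mapsto[kp]$, imported from the proof of Lemma 2.4 of \cite{Azad}, combined with the orbit-stabilizer isomorphism $G/L_{\Pi(h)}\cong\mathcal{O}(h)$ and the exponential identification $\mathfrak{u}_{\Pi(h)}\cong P_{\Pi(h)}/L_{\Pi(h)}$. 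You instead argue directly that a smooth bundle map covering a diffeomorphism of the base and restricting to a diffeomorphism on each fibre is a diffeomorphism, reducing by $K$-equivariance and transitivity to the fibre over $[e]$, where injectivity follows from $U_{\Pi(h)}\cap L_{\Pi(h)}=\{e\}$ together with $L_{\Pi(h)}=C_G(h)$. This is a legitimate and more self-contained route that avoids the external citation; the only point worth stating more carefully is that the fibrewise map $\mathfrak{u}_{\Pi(h)}\rightarrow\pi_h^{-1}([e])$ is a diffeomorphism onto the fibre and not merely a smooth bijection, which follows from the Levi decomposition identifying $\pi_h^{-1}([e])\cong P_{\Pi(h)}/L_{\Pi(h)}\cong U_{\Pi(h)}$ --- essentially the same fact the paper packages as \eqref{Equation: Useful isomorphism}. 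Both arguments verify commutativity of the triangle by the identical observation that $\exp(x)\in P_{\Pi(h)}$ forces $[k\exp(x)]=[k]$.
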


\begin{proof}
We will construct a $K$-equivariant diffeomorphism $K\times_{K_{\Pi(h)}}\mathfrak{u}_{\Pi(h)}\xrightarrow{\cong}\mathcal{O}(h)$ making 
\begin{equation}\label{Equation: Third commutative diagram}
\begin{tikzcd}[column sep=30pt]
K\times_{K_{\Pi(h)}}\mathfrak{u}_{\Pi(h)} \ar{d} \ar{r}{\cong} & \mathcal{O}(h) \ar{d}{\pi_h} \\
K/K_{\Pi(h)} \ar{r}{\cong} & G/P_{\Pi(h)}
\end{tikzcd}
\end{equation}
commute. By reversing the horizontal arrows in \eqref{Equation: Second commutative diagram} and combining with \eqref{Equation: Third commutative diagram}, we will have constructed the desired isomorphism $T^*(G/P_{\Pi(h)})\xrightarrow{\cong}\mathcal{O}(h)$. Accordingly, let \\ $K\times_{K_{\Pi(h)}}(P_{\Pi(h)}/L_{\Pi(h)})$ denote the quotient of $K\times (P_{\Pi(h)}/L_{\Pi(h)})$ by the $K_{\Pi(h)}$-action $g\cdot (k,[p])=(kg^{-1},[gp])$, $g\in K_{\Pi(h)}$, $k\in K$, $p\in P_{\Pi(h)}$. The proof of Lemma 2.4 in \cite{Azad} shows the map
\begin{equation}\label{Equation: Biswas diffeomorphism}
K\times_{{K_{\Pi(h)}}}(P_{\Pi(h)}/L_{\Pi(h)})\rightarrow G/L_{\Pi(h)},\quad [(k,[p])]\mapsto[kp]
\end{equation}
to be a (well-defined) diffeomorphism. In the interest of re-writing \eqref{Equation: Biswas diffeomorphism}, recall that $L_{\Pi(h)}=C_G(h)$ (see Proposition \ref{Proposition: Stabilizer structure}). One therefore has the orbit-stabilizer isomorphism \begin{equation}\label{Equation: Second orbit-stabilizer isomorphism}
G/L_{\Pi(h)}\xrightarrow{\cong}\mathcal{O}(h),\quad [g]\mapsto\Adj_g(h).\end{equation} As for the domain of \eqref{Equation: Biswas diffeomorphism}, note that the Levi factorization \eqref{Equation: Levi decomposition of parabolic} implies $U_{\Pi(h)}\rightarrow P_{\Pi(h)}/L_{\Pi(h)}$, $u\mapsto[u]$, is a variety isomorphism. Identifying $U_{\Pi(h)}$ with $\mathfrak{u}_{\Pi(h)}$ via the exponential map, this isomorphism becomes \begin{equation}\label{Equation: Useful isomorphism}
\mathfrak{u}_{\Pi(h)}\rightarrow P_{\Pi(h)}/L_{\Pi(h)},\quad x\mapsto[\exp(x)].\end{equation} Since $K_{\Pi(h)}\subseteq L_{\Pi(h)}$ (see Lemma \ref{Lemma: K-stabilizer}) and $\exp(\Adj_g(x))=g\exp(x)g^{-1}$ for all $g\in K_{\Pi(h)}$ and $x\in\mathfrak{u}_{\Pi(h)}$, one can show \eqref{Equation: Useful isomorphism} to be $K_{\Pi(h)}$-equivariant. This allows us to replace $P_{\Pi(h)}/L_{\Pi(h)}$ in \eqref{Equation: Biswas diffeomorphism} with $\mathfrak{u}_{\Pi(h)}$, giving rise to the diffeomorphism 
\begin{equation}\label{Equation: Second useful isomorphism}
K\times_{K_{\Pi(h)}}\mathfrak{u}_{\Pi(h)}\rightarrow K\times_{K_{\Pi(h)}}(P_{\Pi(h)}/L_{\Pi(h)}),\quad [(k,x)]\mapsto[(k,[\exp(x)])].\end{equation}
Now composing the three diffeomorphisms \eqref{Equation: Biswas diffeomorphism}, \eqref{Equation: Second orbit-stabilizer isomorphism}, and \eqref{Equation: Second useful isomorphism} (in the only order that makes sense), we obtain
\begin{equation}\label{Equation: Final diffeomorphism}
K\times_{K_{\Pi(h)}}\mathfrak{u}_{\Pi(h)}\rightarrow\mathcal{O}(h),\quad [(k,x)]\mapsto\Adj_{k\exp(x)}(h).
\end{equation}
This diffeomorphism is clearly $K$-equivariant. To see that it makes \eqref{Equation: Third commutative diagram} commute, note that the composite map $K\times_{K_{\Pi(h)}}\mathfrak{u}_{\Pi(h)}\rightarrow K/K_{\Pi(h)}\rightarrow G/P_{\Pi(h)}$ sends $[(k,x)]\in K\times_{K_{\Pi(h)}}\mathfrak{u}_{\Pi(h)}$ to $[k]\in G/P_{\Pi(h)}$. On the other hand, the composite map $K\times_{K_{\Pi(h)}}\mathfrak{u}_{\Pi(h)}\rightarrow\mathcal{O}(h)\rightarrow G/P_{\Pi(h)}$ sends $[(k,x)]\in K\times_{K_{\Pi(h)}}\mathfrak{u}_{\Pi(h)}$ to $[k\exp(x)]\in G/P_{\Pi(h)}$. However, since $x\in\mathfrak{u}_{\Pi(h)}\subseteq\mathfrak{p}_{\Pi(h)}$, we must have $\exp(x)\in P_{\Pi(h)}$. It follows that $[k\exp(x)]=[k]$ in $G/P_{\Pi(h)}$, so that \eqref{Equation: Third commutative diagram} indeed commutes. This completes the proof.  
\end{proof}

Proposition \ref{Proposition: Diffeomorphism with the cotangent bundle} has strong implications for the topology (both equivariant and non-equivariant) of a semisimple orbit.

\begin{corollary}\label{Corollary: Equivariant homotopy equivalence}
There is a $K$-equivariant homotopy equivalence between $\mathcal{O}(h)$ and the compact group orbit $\mathcal{O}_K(h)=K\cdot h$.
\end{corollary}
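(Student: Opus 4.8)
The plan is to leverage the explicit $K$-equivariant diffeomorphism $\mathcal{O}(h)\cong K\times_{K_{\Pi(h)}}\mathfrak{u}_{\Pi(h)}$ furnished by the proof of Proposition \ref{Proposition: Diffeomorphism with the cotangent bundle} (concretely, the map \eqref{Equation: Final diffeomorphism}), and then to $K$-equivariantly deformation-retract the total space of this vector bundle onto its zero section. First I would identify the compact orbit $\mathcal{O}_K(h)$ with that zero section. The $K$-stabilizer of $h$ is $C_K(h)=K\cap C_G(h)=K\cap L_{\Pi(h)}$ by Proposition \ref{Proposition: Stabilizer structure}; since $K_{\Pi(h)}=K\cap P_{\Pi(h)}\subseteq L_{\Pi(h)}$ by Lemma \ref{Lemma: K-stabilizer} while trivially $K\cap L_{\Pi(h)}\subseteq K\cap P_{\Pi(h)}$, we conclude $C_K(h)=K_{\Pi(h)}$. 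The orbit-stabilizer theorem for compact group actions then gives a $K$-equivariant diffeomorphism $K/K_{\Pi(h)}\xrightarrow{\cong}\mathcal{O}_K(h)$, $[k]\mapsto\Adj_k(h)$, and this is exactly the restriction of \eqref{Equation: Final diffeomorphism} to the zero section $\{[(k,0)]\}\subseteq K\times_{K_{\Pi(h)}}\mathfrak{u}_{\Pi(h)}$, since $[(k,0)]\mapsto\Adj_{k\exp(0)}(h)=\Adj_k(h)$.

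Next I would produce the equivariant deformation retraction. Because $K_{\Pi(h)}$ acts on $\mathfrak{u}_{\Pi(h)}$ $\mathbb{C}$-linearly (by restriction of the adjoint representation), the fiberwise scaling
$$H\colon\bigl(K\times_{K_{\Pi(h)}}\mathfrak{u}_{\Pi(h)}\bigr)\times[0,1]\longrightarrow K\times_{K_{\Pi(h)}}\mathfrak{u}_{\Pi(h)},\qquad H\bigl([(k,x)],t\bigr)=[(k,tx)],$$
is well-defined (scaling commutes with the $K_{\Pi(h)}$-action used to form the quotient) and continuous, with $H(\cdot,0)=\mathrm{id}$, $H(\cdot,1)$ the projection onto the zero section, and $H(\cdot,t)$ fixing the zero section pointwise for every $t$. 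It is $K$-equivariant because $K$ acts only on the first factor of $K\times_{K_{\Pi(h)}}\mathfrak{u}_{\Pi(h)}$. Hence $H$ is a $K$-equivariant deformation retraction of the bundle onto its zero section $K/K_{\Pi(h)}$.

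Finally, transporting $H$ through the diffeomorphism \eqref{Equation: Final diffeomorphism} yields a $K$-equivariant deformation retraction of $\mathcal{O}(h)$ onto the image of the zero section, which by the first step is precisely $\mathcal{O}_K(h)=K\cdot h$. In particular the inclusion $\mathcal{O}_K(h)\hookrightarrow\mathcal{O}(h)$ is a $K$-equivariant homotopy equivalence, as claimed. The only points requiring care — and the closest thing to an obstacle here — are bookkeeping ones: checking that the fiberwise scaling descends to the associated-bundle quotient (immediate from the linearity of the $K_{\Pi(h)}$-action on $\mathfrak{u}_{\Pi(h)}$) and that the diffeomorphism of Proposition \ref{Proposition: Diffeomorphism with the cotangent bundle} genuinely carries the zero section onto $\mathcal{O}_K(h)$; both were effectively recorded in the construction of \eqref{Equation: Final diffeomorphism}, so no new difficulty arises.
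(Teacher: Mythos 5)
Your argument is essentially the paper's own proof: the paper likewise deduces the result from the $K$-equivariant diffeomorphism \eqref{Equation: Final diffeomorphism} together with the $K$-equivariant homotopy equivalence between the associated bundle and its zero-section, whose image is $\mathcal{O}_K(h)$; you have simply made explicit the fiberwise-scaling retraction and the identification $C_K(h)=K_{\Pi(h)}$ that the paper leaves implicit. The only blemish is a harmless index swap in your homotopy ($H(\cdot,1)$, not $H(\cdot,0)$, is the identity for $H([(k,x)],t)=[(k,tx)]$).
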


\begin{proof}
Note that $K\times_{K_{\Pi(h)}}\mathfrak{u}_{\Pi(h)}$ is $K$-equivariantly homotopy equivalent to its zero-section. Since \eqref{Equation: Final diffeomorphism} is a $K$-equivariant diffeomorphism, it follows that the image of the zero-section under \eqref{Equation: Final diffeomorphism} is $K$-equivariantly homotopy equivalent to $\mathcal{O}(h)$. The zero-section is given by $\{[(k,0)]\in K\times_{K_{\Pi(h)}}\mathfrak{u}_{\Pi(h)}:k\in K\}$, and its image under \eqref{Equation: Final diffeomorphism} is the $K$-orbit $\mathcal{O}_K(h)\subseteq\mathcal{O}(h)$. This completes the proof.
\end{proof}

An essentially analogous version of Corollary \ref{Corollary: Equivariant homotopy equivalence} is as follows. The cotangent bundle projection $T^*(G/P_{\Pi(h)})\rightarrow G/P_{\Pi(h)}$ is a $K$-equivariant (in fact, $G$-equivariant) homotopy equivalence, so that \eqref{Equation: Diffeomorphism with cotangent bundle} shows $\pi_h:\mathcal{O}(h)\rightarrow G/P_{\Pi(h)}$ to be a $K$-equivariant homotopy equivalence. In principle, one can use this fact to compute various topological invariants of $\mathcal{O}(h)$. Indeed, since $G/P_{\Pi(h)}$ is simply-connected (as discussed in the proof of Lemma \ref{Lemma: K-stabilizer}), so too is $\mathcal{O}(h)$. Secondly, the cohomology ring $H^*(\mathcal{O}(h);\mathbb{Z})$ is isomorphic to $H^*(G/P_{\Pi(h)};\mathbb{Z})$, which has a well-studied description in terms of Schubert calculus (see \cite{Bernstein}). This result has an equivariant counterpart, namely that the $L$-equivariant cohomology rings $H_L^*(\mathcal{O}(h);\mathbb{Z})$ and $H_L^*(G/P_{\Pi(h)};\mathbb{Z})$ are isomorphic for any closed subgroup $L\subseteq K$ (see \cite{Brion} for details on equivariant cohomology). If one takes $L$ to be the maximal torus $K\cap T$ of $K$, then $H_L^*(G/P_{\Pi(h)};\mathbb{Z})$ is describable via equivariant Schubert calculus (see \cite{Graham,Mihaleca}) and (if one works over $\mathbb{C}$ instead of $\mathbb{Z}$) Goresky-Kottwitz-MacPherson (GKM) theory (see \cite{Guillemin}).     

\subsection{Equivariant projective compactifications}\label{Subsection: Equivariant projective compactifications}
While semisimple orbits are affine (see Corollary \ref{Corollary: Semisimple orbits are affine}), each turns out to admit a $G$-equivariant projective compactification (ie. a projective $G$-variety $X$ and a $G$-variety isomorphism between the orbit and an invariant open dense subvariety of $X$). To properly construct this compactification, however, we will need a few preliminary results. Let $w_0\in W$ denote the longest element of the Weyl group (see \cite[Chapt. 20]{Bump}). It is known that $w_0\cdot\Pi=-\Pi$, so that $S\mapsto S^{\vee}:=-w_0\cdot S$ defines an involution on the collection of subsets $S\subseteq\Pi$. With this in mind, we have the following lemma.

\begin{lemma}\label{Lemma: Root set equality}
	For $S\subseteq\Pi$, we have $w_0\cdot\Delta_{S^{\vee}}^{-}=\Delta_{S}^+$.
\end{lemma}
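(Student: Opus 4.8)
The plan is to first ignore signs and show that $w_0$ carries $\Delta_{S^{\vee}}$ bijectively onto $\Delta_S$, and then recover the signs from the fact that $w_0$ interchanges $\Delta_+$ and $\Delta_-$.

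First I would record the standard consequences of $w_0\cdot\Pi=-\Pi$. Since $w_0\in W$, it permutes $\Delta$, and it sends each simple root to the negative of a simple root. Expanding an arbitrary $\beta\in\Delta_+$ in the basis $\Pi$ as a non-negative integer combination and applying $w_0$ then shows $w_0\cdot\beta\in\Delta_-$; as $w_0$ is a bijection of $\Delta=\Delta_+\cup\Delta_-$, this forces $w_0\cdot\Delta_+=\Delta_-$ and $w_0\cdot\Delta_-=\Delta_+$. I would also note that $S\mapsto S^{\vee}$ is an involution: $(S^{\vee})^{\vee}=-w_0\cdot(-w_0\cdot S)=w_0^2\cdot S=S$.

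Next comes the set-level identity $w_0\cdot\Delta_{S^{\vee}}=\Delta_S$. A root $\beta\in\Delta_{S^{\vee}}$ can be written $\beta=\sum_{\alpha\in S}c_{\alpha}(-w_0\cdot\alpha)$ with $c_{\alpha}\in\mathbb{Z}$; applying $w_0$ and using $w_0^2=e$ gives $w_0\cdot\beta=-\sum_{\alpha\in S}c_{\alpha}\alpha$, which is a $\mathbb{Z}$-linear combination of $S$ and is a root because $w_0$ preserves $\Delta$. Hence $w_0\cdot\Delta_{S^{\vee}}\subseteq\Delta_S$. Applying this inclusion with $S^{\vee}$ in place of $S$, and using $(S^{\vee})^{\vee}=S$, gives $w_0\cdot\Delta_S\subseteq\Delta_{S^{\vee}}$, i.e.\ $\Delta_S\subseteq w_0\cdot\Delta_{S^{\vee}}$; the two sets therefore coincide.

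Finally, since $w_0$ acts as a bijection on $\Delta$ it commutes with intersections, so from $\Delta_{S^{\vee}}^{-}=\Delta_{S^{\vee}}\cap\Delta_-$ we obtain
\[
w_0\cdot\Delta_{S^{\vee}}^{-}=(w_0\cdot\Delta_{S^{\vee}})\cap(w_0\cdot\Delta_-)=\Delta_S\cap\Delta_+=\Delta_S^{+},
\]
as claimed. I do not anticipate a real obstacle here; the only point that deserves a moment's care is confirming that $w_0\cdot\Delta_+=\Delta_-$ genuinely follows from $w_0\cdot\Pi=-\Pi$ (via the expansion argument above) rather than being needed as a separate hypothesis.
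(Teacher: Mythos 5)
Your proof is correct. It rests on the same two ingredients as the paper's argument --- the relation $w_0\cdot\Pi=-\Pi$ and the expansion of a root of $\Delta_{S^{\vee}}$ in terms of $S^{\vee}$ --- but it packages them differently. The paper proves the two inclusions $w_0\cdot\Delta_{S^{\vee}}^{-}\supseteq\Delta_S^{+}$ and $w_0\cdot\Delta_{S^{\vee}}^{-}\subseteq\Delta_S^{+}$ by two separate coefficient computations, tracking the signs of the integers $k_i$ throughout. You instead factor the statement into an unsigned part, $w_0\cdot\Delta_{S^{\vee}}=\Delta_S$, and a sign part, $w_0\cdot\Delta_{-}=\Delta_{+}$, and recover the lemma by intersecting; moreover you get the reverse inclusion in the unsigned identity for free from the involutivity $(S^{\vee})^{\vee}=S$ rather than by a second computation. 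Your version is slightly more self-contained in that it derives $w_0\cdot\Delta_{+}=\Delta_{-}$ from $w_0\cdot\Pi=-\Pi$ (the paper cites Bump for this fact where it needs it, in the proof of Proposition \ref{Proposition: Intersection of opposite parabolics}), and the ``separate span from sign'' organization generalizes cleanly to any element of $W$ permuting $\pm\Pi$; the paper's direct computation is marginally shorter on the page. All steps in your argument check out, including the point that $w_0$, being a bijection of $\Delta$, commutes with intersections of subsets of $\Delta$.
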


\begin{proof}
	Let us write $S=\{\alpha_1,\ldots,\alpha_n\}$, so that $S^{\vee}=\{\gamma_1,\ldots,\gamma_n\}$ with $\gamma_i=-w_0\cdot\alpha_i$ for all $i$. Given $\beta\in\Delta_S^+$, we have $\beta=\sum_{i=1}^nk_i\alpha_i$ with $k_1,\ldots,k_n\in\mathbb{Z}_{\geq 0}$. It follows that 
	$$w_0\cdot\beta=\sum_{i=1}^n(-k_i)(-w_0\cdot\alpha_i)=\sum_{i=1}^n(-k_i)\gamma_i.$$ Since $w_0\cdot\beta$ is a root, this implies that $w_0\cdot\beta\in\Delta_{S^{\vee}}^{-}$. Using the fact that $w_0^2=e$, we have $\beta\in w_0\cdot\Delta_{S^{\vee}}^{-}$. 
	
	Conversely, if $\beta\in w_0\cdot\Delta_{S^{\vee}}^{-}$, then $\beta=w_0\cdot\left(\sum_{i=1}^nk_i\gamma_i\right)$ with $k_1,\ldots,k_n\in\mathbb{Z}_{\leq 0}$. We conclude that 
	$$\beta=w_0\cdot\left(\sum_{i=1}^nk_i(-w_0\cdot\alpha_i)\right)=\sum_{i=1}^n(-k_i)\alpha_i.$$
	Noting that $\beta$ is a root, this implies that $\beta\in\Delta_{S}^+$.
\end{proof}

Given $S\subseteq\Pi$, the subset $S^{\vee}\subseteq\Pi$ determines a standard parabolic subgroup $P_{S^{\vee}}\subseteq G$ (as discussed in \ref{Subsection: The basic objects}). Now consider the parabolic subgroup $P_S^*\subseteq G$ defined by
\begin{equation}\label{Equation: Dual parabolic}
P_S^*:=w_0P_{S^{\vee}}(w_0)^{-1},
\end{equation}
where $w_0P_{S^{\vee}}(w_0)^{-1}:=gP_{S^{\vee}}g^{-1}$ for any representative $g\in N_G(T)$ of $w_0$.

\begin{proposition}\label{Proposition: Intersection of opposite parabolics}
	For $S\subseteq\Pi$, we have $P_S\cap P_{S}^*=L_S$.
\end{proposition}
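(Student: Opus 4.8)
The plan is to reduce the assertion to an equality of Lie algebras, $\mathfrak{p}_S\cap\mathfrak{p}_S^*=\mathfrak{l}_S$, and then upgrade it to the group level via a connectedness argument.

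First I would compute the Lie algebra of $P_S^*$. Fixing a representative $g\in N_G(T)$ of $w_0$, we have $\Lie(P_S^*)=\Adj_g(\mathfrak{p}_{S^\vee})$. Since $\Adj_g$ is a Lie algebra automorphism of $\mathfrak{g}$ that fixes $\mathfrak{t}$ and carries each $\mathfrak{g}_\alpha$ to $\mathfrak{g}_{w_0\cdot\alpha}$, applying it to the expression for $\mathfrak{p}_{S^\vee}$ coming from \eqref{Equation: Lie algebra of standard parabolic} (with $\mathfrak{b}=\mathfrak{t}\oplus\bigoplus_{\alpha\in\Delta_+}\mathfrak{g}_\alpha$), and using the classical fact that $w_0\cdot\Delta_+=\Delta_-$ together with Lemma \ref{Lemma: Root set equality} in the form $w_0\cdot\Delta_{S^\vee}^-=\Delta_S^+$, one obtains
\[
\Lie(P_S^*)=\mathfrak{t}\oplus\bigoplus_{\alpha\in\Delta_-}\mathfrak{g}_\alpha\oplus\bigoplus_{\alpha\in\Delta_S^+}\mathfrak{g}_\alpha .
\]
Intersecting this with $\mathfrak{p}_S=\mathfrak{t}\oplus\bigoplus_{\alpha\in\Delta_+\cup\Delta_S^-}\mathfrak{g}_\alpha$ and using the linear independence of the root spaces reduces the claim to the elementary set identity $(\Delta_+\cup\Delta_S^-)\cap(\Delta_-\cup\Delta_S^+)=\Delta_S$, which holds because $\Delta_+$ and $\Delta_-$ are disjoint while $\Delta_S^{\pm}=\Delta_S\cap\Delta_{\pm}$. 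In view of \eqref{Equation: Lie algebra of Levi factor} this gives $\mathfrak{p}_S\cap\mathfrak{p}_S^*=\mathfrak{l}_S$.

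Finally I would pass to the groups. The closed subgroup $P_S\cap P_S^*$ has Lie algebra $\mathfrak{p}_S\cap\mathfrak{p}_S^*=\mathfrak{l}_S$, and it contains $T$: the group $P_S$ is a standard parabolic, while $P_S^*=gP_{S^\vee}g^{-1}$ contains $gTg^{-1}=T$ since $g$ normalizes $T$. As the intersection of two parabolic subgroups of $G$ sharing a maximal torus is connected, $P_S\cap P_S^*$ is a connected closed subgroup with Lie algebra $\mathfrak{l}_S$; since $L_S$ is connected (being the quotient $P_S/U_S$ of the connected group $P_S$) and has the same Lie algebra, and since in characteristic zero a connected closed subgroup is determined by its Lie algebra, it follows that $P_S\cap P_S^*=L_S$. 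I expect this last step to be the main obstacle, since identifying $\Lie(P_S\cap P_S^*)$ with $\mathfrak{p}_S\cap\mathfrak{p}_S^*$ and the connectedness of the intersection both rely on the structure theory of reductive groups; a more self-contained alternative would be to check directly that $L_S\subseteq P_S^*$ and that $w_0L_{S^\vee}w_0^{-1}=L_S$ — so that $P_S$ and $P_S^*$ share the Levi factor $L_S$ — and then use uniqueness of the Levi decomposition, via the open cell $U_S^*L_SU_S$, to force every element of $P_S\cap P_S^*$ into $L_S$.
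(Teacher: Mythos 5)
Your proposal is correct and follows essentially the same route as the paper: reduce to the Lie algebra identity $\mathfrak{p}_S\cap w_0\cdot\mathfrak{p}_{S^{\vee}}=\mathfrak{l}_S$ via Lemma \ref{Lemma: Root set equality} and $w_0\cdot\Delta_+=\Delta_-$, then upgrade to the group level using the connectedness of $L_S$ (as a quotient of the connected group $P_S$) and of $P_S\cap P_S^*$ (the paper cites \cite[Prop.~2.1]{Digne} for exactly the fact you invoke about intersections of parabolics containing a common maximal torus). The only nitpick is that $\Adj_g$ for $g\in N_G(T)$ preserves $\mathfrak{t}$ as a subspace rather than fixing it pointwise, but that is all your computation needs.
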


\begin{proof}
	Since $L_S$ and $P_S\cap P_{S}^*$ are connected (the former by the beginning of the proof of Proposition \ref{Proposition: Stabilizer structure} and the latter by \cite[Prop. 2.1]{Digne}), it suffices to establish the equality of their respective Lie algebras. The latter Lie algebra is precisely $\mathfrak{p}_S\cap w_0\cdot\mathfrak{p}_{S^{\vee}}$, where $w_0\cdot\mathfrak{p}_{S^{\vee}}$ denotes the adjoint action of a lift of $w_0$ to $N_G(T)$. This intersection contains $\mathfrak{t}$ and therefore decomposes as a direct sum of $\mathfrak{t}$ and the root spaces belonging to both $\mathfrak{p}_{S}$ and $w_0\cdot\mathfrak{p}_{S^{\vee}}$. To identify these root spaces, note that \begin{equation}\label{Equation: First root space decomposition}
	\mathfrak{p}_S=\mathfrak{t}\oplus\left(\bigoplus_{\alpha\in\Delta_+}\mathfrak{g}_{\alpha}\right)\oplus\left(\bigoplus_{\alpha\in\Delta_{S}^{-}}\mathfrak{g}_{\alpha}\right)\end{equation} and 
	\begin{equation}\label{Equation: Second root space decomposition}
	w_0\cdot\mathfrak{p}_{S^{\vee}}=\mathfrak{t}\oplus\left(\bigoplus_{\alpha\in w_0\cdot\Delta_{+}}\mathfrak{g}_{\alpha}\right)\oplus\left(\bigoplus_{\alpha\in w_0\cdot\Delta_{S^{\vee}}^{-}}\mathfrak{g}_{\alpha}\right).\end{equation}
	Since $w_0\cdot\Delta_{+}=\Delta_{-}$ (see \cite[Prop. 20.14]{Bump} and $w_0\cdot\Delta_{S^{\vee}}^{-}=\Delta_{S}^+$ (by Lemma \ref{Lemma: Root set equality}), we may re-write \eqref{Equation: Second root space decomposition} to obtain
	\begin{equation}\label{Equation: Third root space decomposition}
	w_0\cdot\mathfrak{p}_{S^{\vee}}=\mathfrak{t}\oplus\left(\bigoplus_{\alpha\in \Delta_{S}^+}\mathfrak{g}_{\alpha}\right)\oplus\left(\bigoplus_{\alpha\in\Delta_{-}}\mathfrak{g}_{\alpha}\right).
	\end{equation}
	Using \eqref{Equation: First root space decomposition} and \eqref{Equation: Third root space decomposition}, we conclude that 
	$$\mathfrak{p}_{S}\cap w_0\cdot\mathfrak{p}_{S^{\vee}}=\mathfrak{t}\oplus\left(\bigoplus_{\alpha\in\Delta_S^+}\mathfrak{g}_{\alpha}\right)\oplus\left(\bigoplus_{\alpha\in\Delta_S^-}\mathfrak{g}_{\alpha}\right)=\mathfrak{l}_S,$$ the Lie algebra of $L_S$. This completes the proof.
\end{proof}

Returning to the discussion of equivariant projective orbit compactifications, fix $h\in\mathcal{D}$ and consider its semisimple orbit $\mathcal{O}(h)$. Also, let $X$ be any $G$-variety. By means of \eqref{Equation: Second orbit-stabilizer isomorphism}, a $G$-variety isomorphism between $\mathcal{O}(h)$ and an invariant subvariety $Y\subseteq X$ is equivalent to a $G$-variety isomorphism between $G/L_{\Pi(h)}$ and $Y$. Now by the general orbit-stabilizer theory developed in \ref{Subsection: Orbits as quotient varieties}, specifying the latter isomorphism is equivalent to specifying a point $x\in X$ with $C_G(x)=L_{\Pi(h)}$. In this case, $G/L_{\Pi(h)}$ (hence $\mathcal{O}(h)$) is equivariantly identified with the $G$-orbit of $x$ in $X$. 

In light of the above, we seek a projective $G$-variety $X$ and a point $x\in X$ with $C_G(x)=L_{\Pi(h)}$. To this end, consider the diagonal $G$-action (see Example \ref{Example: Diagonal action}) on the projective variety $X=G/P_{\Pi(h)}\times G/{P_{\Pi(h)}^*}$. Consider also the pair of identity cosets $x=([e],[e])\in X$. The $G$-stabilizer of $x$ is $P_{\Pi(h)}\cap P_{\Pi(h)}^*$, which by Proposition \ref{Proposition: Intersection of opposite parabolics} is precisely $L_{\Pi(h)}$. Appealing to the previous paragraph, the following is a $G$-variety isomorphism between $\mathcal{O}(h)$ and the $G$-orbit $G\cdot([e],[e])\subseteq G/P_{\Pi(h)}\times G/{P_{\Pi(h)}^*}$:
 \begin{equation}\label{Equation: First inclusion}
\mathcal{O}(h)\xrightarrow{\cong}G\cdot([e],[e]),\quad \text{Ad}_g(h)\mapsto ([g],[g]),\quad g\in G.
\end{equation}

\begin{theorem}\label{Theorem: Equivariant projective compactification}
	If $h\in\mathcal{D}$, then \eqref{Equation: First inclusion} makes $G/P_{\Pi(h)}\times G/{P_{\Pi(h)}^{*}}$ a $G$-equivariant projective compactification of $\mathcal{O}(h)$.\footnote{For the precise meaning of ``$G$-equivariant projective compactification'', see the beginning of \ref{Subsection: Equivariant projective compactifications}.}
\end{theorem}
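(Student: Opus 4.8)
\emph{Proof plan.} The plan is to check the two conditions in the definition of a $G$-equivariant projective compactification: that $X:=G/P_{\Pi(h)}\times G/P_{\Pi(h)}^{*}$ is a projective $G$-variety, and that the $G$-orbit $G\cdot([e],[e])$ — with which $\mathcal{O}(h)$ is $G$-equivariantly identified via \eqref{Equation: First inclusion}, as noted just before the theorem — is an open dense subvariety of $X$.

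The first point is routine. Each of $G/P_{\Pi(h)}$ and $G/P_{\Pi(h)}^{*}$ is a projective variety (see \cite[Cor. 28.1.4]{Tauvel}), so their product $X$ is projective as well, and the diagonal $G$-action (Example \ref{Example: Diagonal action}) makes $X$ a projective $G$-variety. Since $G$ is irreducible, so are $G/P_{\Pi(h)}$, $G/P_{\Pi(h)}^{*}$, and hence $X$.

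The substantive point is density, which I would obtain from a dimension count; openness then follows formally. Here I would recycle the root-space computations already carried out for Proposition \ref{Proposition: Intersection of opposite parabolics}. From \eqref{Equation: First root space decomposition} and \eqref{Equation: Third root space decomposition} one reads off, besides $\mathfrak{p}_{\Pi(h)}\cap w_0\cdot\mathfrak{p}_{\Pi(h)^{\vee}}=\mathfrak{l}_{\Pi(h)}$, the complementary identity $\mathfrak{p}_{\Pi(h)}+w_0\cdot\mathfrak{p}_{\Pi(h)^{\vee}}=\mathfrak{g}$ (the first summand contributes $\mathfrak{t}$ together with every root space $\mathfrak{g}_\alpha$, $\alpha\in\Delta_+$, and the second contributes every $\mathfrak{g}_\alpha$, $\alpha\in\Delta_-$, while $\mathfrak{g}=\mathfrak{t}\oplus\bigoplus_{\alpha\in\Delta}\mathfrak{g}_\alpha$). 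Taking dimensions, and using that the adjoint action of a lift of $w_0$ preserves dimension and that $\dim P_{\Pi(h)}^{*}=\dim P_{\Pi(h)^{\vee}}$ by \eqref{Equation: Dual parabolic}, this yields $\dim P_{\Pi(h)}+\dim P_{\Pi(h)}^{*}=\dim G+\dim L_{\Pi(h)}$. On the other hand, the $G$-stabilizer of $([e],[e])$ for the diagonal action is $P_{\Pi(h)}\cap P_{\Pi(h)}^{*}=L_{\Pi(h)}$ (Proposition \ref{Proposition: Intersection of opposite parabolics}), so the orbit-stabilizer isomorphism \eqref{Equation: Orbit-stabilizer isomorphism} gives $\dim(G\cdot([e],[e]))=\dim G-\dim L_{\Pi(h)}$, while $\dim X=(\dim G-\dim P_{\Pi(h)})+(\dim G-\dim P_{\Pi(h)}^{*})$. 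Combining these three equalities shows $\dim(G\cdot([e],[e]))=\dim X$.

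Finally I would conclude: the orbit $G\cdot([e],[e])$ is locally closed in $X$ (every orbit is open in its closure) and irreducible, so its closure $\overline{G\cdot([e],[e])}$ is an irreducible closed subvariety of $X$ whose dimension equals $\dim X$; since $X$ is irreducible, $\overline{G\cdot([e],[e])}=X$, i.e. the orbit is dense, and being open in its closure it is also open in $X$. Together with the fact that \eqref{Equation: First inclusion} is a $G$-variety isomorphism of $\mathcal{O}(h)$ onto $G\cdot([e],[e])$, this exhibits $X$ as a $G$-equivariant projective compactification of $\mathcal{O}(h)$. I do not anticipate a serious obstacle; the only step needing a moment's attention is the observation $\mathfrak{p}_{\Pi(h)}+w_0\cdot\mathfrak{p}_{\Pi(h)^{\vee}}=\mathfrak{g}$, which is immediate from the root-space bookkeeping already present in the proof of Proposition \ref{Proposition: Intersection of opposite parabolics}.
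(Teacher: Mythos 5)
Your proof is correct, and it reaches the key dimension equality by a genuinely different route than the paper. Both arguments share the same skeleton: reduce density to $\dim(G\cdot([e],[e]))=\dim\bigl(G/P_{\Pi(h)}\times G/P_{\Pi(h)}^{*}\bigr)$, invoke irreducibility of the product to conclude that a closed irreducible subvariety of full dimension is everything, and get openness from the fact that an orbit is open in its closure. The difference is in how the dimensions are matched. The paper computes $\dim\mathcal{O}(h)=2\dim(G/P_{\Pi(h)})$ by appealing to Proposition \ref{Proposition: Diffeomorphism with the cotangent bundle} (the diffeomorphism with $T^*(G/P_{\Pi(h)})$) and then shows $\dim P_{\Pi(h)}=\dim P_{\Pi(h)}^{*}$ via the cardinality statement of Lemma \ref{Lemma: Root set equality}. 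You instead compute the orbit dimension directly from the orbit--stabilizer theorem using $C_G([e],[e])=P_{\Pi(h)}\cap P_{\Pi(h)}^{*}=L_{\Pi(h)}$, and balance it against $\dim X$ via the transversality identity $\mathfrak{p}_{\Pi(h)}+w_0\cdot\mathfrak{p}_{\Pi(h)^{\vee}}=\mathfrak{g}$, which you correctly read off from the root-space decompositions \eqref{Equation: First root space decomposition} and \eqref{Equation: Third root space decomposition} already established in the proof of Proposition \ref{Proposition: Intersection of opposite parabolics}; the dimension formula for a sum of subspaces then gives $\dim P_{\Pi(h)}+\dim P_{\Pi(h)}^{*}=\dim G+\dim L_{\Pi(h)}$, and the count closes. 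Your version is somewhat more self-contained: it stays entirely within the root-space bookkeeping and does not lean on the differential-geometric cotangent-bundle result, whereas the paper's version reuses that earlier proposition and so is shorter given what has already been proved. Both are complete and correct.
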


\begin{proof}
It remains only to prove that $G\cdot([e],[e])$ is open and dense in $G/P_{\Pi(h)}\times G/{P_{\Pi(h)}^{*}}$. However, since $G\cdot([e],[e])$ is open in its closure (see \ref{Subsection: Generalities on orbits}), it will suffice to prove that $\overline{G\cdot([e],[e])}=G/P_{\Pi(h)}\times G/{P_{\Pi(h)}^{*}}$. Now note that $G/P_{\Pi(h)}\times G/{P_{\Pi(h)}^{*}}$ is irreducible, so that it has no proper irreducible closed subvarieties of codimension zero (see \cite[Sect. 3.2]{Humphreys}). We are therefore reduced to proving that $\overline{G\cdot([e],[e])}$ and $G/P_{\Pi(h)}\times G/{P_{\Pi(h)}^{*}}$ have equal dimensions. The dimension of the former equals that of its open subvariety $G\cdot([e],[e])$, which by \eqref{Equation: First inclusion} equals $\dim(\mathcal{O}(h))$. Proposition \ref{Proposition: Diffeomorphism with the cotangent bundle} implies $\dim(\mathcal{O}(h))=2\dim(G/P_{\Pi(h)})$, so that we have \begin{equation}\label{Equation: First dimension}\dim(\overline{G\cdot([e],[e])})=2\dim(G/P_{\Pi(h)}).\end{equation} At the same time, we have \begin{equation}\label{Equation: Second dimension}
\dim(G/P_{\Pi(h)}\times G/{P_{\Pi(h)}^{*}})=\dim(G/P_{\Pi(h)})+\dim(G/{P_{\Pi(h)}^{*}}).\end{equation} In light of \eqref{Equation: First dimension} and \eqref{Equation: Second dimension}, it will be enough to prove that $\dim(G/P_{\Pi(h)})=\dim(G/{P_{\Pi(h)}^*})$ (or equivalently, $\dim(P_{\Pi(h)})=\dim(P_{\Pi(h)}^*)$). To this end, Lemma \ref{Lemma: Root set equality} implies that $\Delta_{\Pi(h)}^+$ and $\Delta_{\Pi(h)^{\vee}}^{-}$ have the same cardinality. Of course, since $\Delta_{\Pi(h)}^{-}=-\Delta_{\Pi(h)}^+$, this is equivalent to $\Delta_{\Pi(h)}^{-}$ and $\Delta_{\Pi(h)^{\vee}}^-$ having the same cardinality. Now \eqref{Equation: Lie algebra of standard parabolic} implies that $\dim(\mathfrak{p}_{\Pi(h)})=\dim(\mathfrak{p}_{\Pi(h)^{\vee}})$, which one can re-write as $\dim(P_{\Pi(h)})=\dim(P_{\Pi(h)^{\vee}})$. Finally, since $P_{\Pi(h)}^{*}$ was defined to be a conjugate of $P_{\Pi(h)^{\vee}}$ (see \eqref{Equation: Dual parabolic}), the dimensions of $P_{\Pi(h)^{\vee}}$ and $P_{\Pi(h)}^{*}$ must also agree. Hence $\dim(P_{\Pi(h)})=\dim(P_{\Pi(h)}^*)$, as desired. 
\end{proof}

\begin{remark}\label{Remark: Connection to mirror symmetry}
The compactification in Theorem \ref{Theorem: Equivariant projective compactification} turns out to have some context in the homological mirror symmetry program (see \cite{Givental,Hori} for an introduction to this program).  Vaguely speaking, an instance of homological mirror symmetry is an equivalence between symplecto-geometric data on a smooth manifold (called the $A$-side of the symmetry) and algebro-geometric data on a projective variety (called the $B$-side of the symmetry). In \cite{Ballico}, the authors interpret this as a symmetry between so-called $A$-side Landau-Ginzburg (LG) models and $B$-side LG models (see \cite[Sect. 1]{Ballico} for further details). Building on an existing interpretation of a semisimple orbit $\mathcal{O}(h)$ as an $A$-side LG model (explained in \cite[Sect. 1]{Ballico} and based on \cite[Thm. 2.2]{GGSM}), the authors explain how one would make $G/P_{\Pi(h)}\times G/{P_{\Pi(h)}^{*}}$ into the corresponding $B$-side model.      
\end{remark}

\section{Nilpotent Orbits}\label{Section: Nilpotent orbits}
Recall that we introduced semisimple orbits as generalizing the conjugacy classes of diagonalizable matrices. We now discuss the appropriate generalization of a nilpotent conjugacy class -- a \textit{nilpotent orbit}.    

\subsection{Definitions and first results}\label{Subsection: Definitions and first results}
Recall that the nilpotent cone $\mathcal{N}\subseteq\mathfrak{g}$ is invariant under the adjoint action of $G$ (see \ref{Subsection: semisimple and nilpotent elements}). It follows that $\mathcal{N}$ is a union of those adjoint orbits $\mathcal{O}$ for which $\mathcal{O}\cap\mathcal{N}\neq\emptyset$ (ie. $\mathcal{O}$ contains a nilpotent element). Such adjoint orbits will be called \textit{nilpotent orbits}.

\begin{definition}
An adjoint orbit $\mathcal{O}\subseteq\mathfrak{g}$ is called a \textit{nilpotent orbit} if $\mathcal{O}\cap\mathcal{N}\neq\emptyset$, or equivalently $\mathcal{O}\subseteq\mathcal{N}$.
\end{definition}

While this is generally taken to be the definition of a nilpotent orbit, there are several equivalent descriptions. Indeed, one can rephrase Proposition \ref{Proposition: Characterizations of nilpotent elements} in the language of adjoint orbits as follows.

\begin{proposition}\label{Proposition: Characterizations of nilpotent orbits}
If $\mathcal{O}\subseteq\mathfrak{g}$ is an adjoint orbit, then the following conditions are equivalent.
\begin{itemize}
\item[(i)] $\mathcal{O}$ is a nilpotent orbit.
\item[(ii)] $\mathcal{O}$ is invariant under the dilation action of $\mathbb{C}^*$ on $\mathfrak{g}$.
\item[(iii)] $0\in\overline{\mathcal{O}}$.
\item[(iv)] For all $f\in\mathbb{C}[\mathfrak{g}]^G$, $f$ takes the constant value $f(0)$ on $\mathcal{O}$. 
\end{itemize}
\end{proposition}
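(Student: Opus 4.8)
The plan is to derive Proposition \ref{Proposition: Characterizations of nilpotent orbits} directly from Proposition \ref{Proposition: Characterizations of nilpotent elements} by translating each statement about an element $x$ into the corresponding statement about its orbit $\mathcal{O}=\mathcal{O}(x)$. Fix a point $x\in\mathcal{O}$ once and for all; since all the conditions in Proposition \ref{Proposition: Characterizations of nilpotent elements} depend only on $\mathcal{O}(x)$ and not on the particular representative (a point I would note explicitly, as it is what makes the translation well-posed), it suffices to match them up with conditions (i)--(iv) here.

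First I would handle (i): by definition $\mathcal{O}$ is a nilpotent orbit if and only if $\mathcal{O}\cap\mathcal{N}\neq\emptyset$, which (since $\mathcal{N}$ is $G$-invariant) holds if and only if $x\in\mathcal{N}$, i.e. $x$ is nilpotent. This is exactly condition (i) of Proposition \ref{Proposition: Characterizations of nilpotent elements}. Next, for (ii): the dilation action of $\mathbb{C}^*$ sends $\mathcal{O}$ to $a\cdot\mathcal{O}=\mathcal{O}(ax)$ for $a\in\mathbb{C}^*$, so $\mathcal{O}$ is dilation-invariant precisely when $\mathcal{O}(ax)=\mathcal{O}(x)$ for all $a\in\mathbb{C}^*$, i.e. when $ax\in\mathcal{O}(x)$ for all $a\in\mathbb{C}^*$ --- this is condition (ii) of Proposition \ref{Proposition: Characterizations of nilpotent elements}. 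Condition (iii) here, $0\in\overline{\mathcal{O}}$, is literally condition (iii) of Proposition \ref{Proposition: Characterizations of nilpotent elements}, since $\overline{\mathcal{O}}=\overline{\mathcal{O}(x)}$. Finally, (iv): a $G$-invariant polynomial $f$ is constant on $\mathcal{O}$ (taking value $f(x)$) by $G$-invariance, so ``$f$ takes the constant value $f(0)$ on $\mathcal{O}$'' is equivalent to $f(x)=f(0)$; ranging over all $f\in\mathbb{C}[\mathfrak{g}]^G$ this is exactly condition (iv) of Proposition \ref{Proposition: Characterizations of nilpotent elements}.

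Having matched (i)--(iv) with the four equivalent conditions of Proposition \ref{Proposition: Characterizations of nilpotent elements}, their mutual equivalence is immediate. There is no real obstacle here; the proposition is a repackaging, and the only thing requiring a word of care is the remark that each condition is representative-independent, so that the choice of $x\in\mathcal{O}$ does not matter. I would therefore write the proof as: fix $x\in\mathcal{O}$, observe that (i), (ii), (iii), (iv) of the present statement are respectively equivalent to conditions (i), (ii), (iii), (iv) of Proposition \ref{Proposition: Characterizations of nilpotent elements} applied to $x$ (with the brief justifications above), and conclude by that proposition.
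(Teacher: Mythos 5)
Your proposal is correct and matches the paper's treatment: the paper gives no separate proof, stating only that the proposition is Proposition \ref{Proposition: Characterizations of nilpotent elements} ``rephrased in the language of adjoint orbits,'' which is exactly the representative-independent translation you carry out. The only point needing care --- that each condition depends only on the orbit of $x$, using the linearity of $\Adj_g$ for condition (ii) and the $G$-invariance of $\mathcal{N}$ and of $f$ for (i) and (iv) --- is one you address explicitly.
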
 

\begin{example}\label{Example: Type A nilpotent orbits}
Let $G=\SL_n(\mathbb{C})$. By Theorem \ref{Theorem: equivalence of semisimple / nilpotent} and Example \ref{Example: Type A adjoint orbits}, the nilpotent orbits in $\mathfrak{sl}_n(\mathbb{C})$ are exactly the conjugacy classes of the nilpotent $n\times n$ matrices (which necessarily have zero trace). These conjugacy classes are in turn indexed by the nilpotent $n\times n$ matrices in Jordan canonical form, modulo permutations of Jordan blocks along the diagonal. Of course, each of these equivalence classes of nilpotent Jordan matrices has a unique representative whose block heights, read from top to bottom, form a non-decreasing sequence. The sequences one obtains in this way are precisely the partitions of $n$, ie. the positive integer sequences $\lambda=(\lambda_1,\lambda_2,\ldots,\lambda_k)$, $1\leq k\leq n$, satisfying $\lambda_1\geq\lambda_2\geq\ldots\geq\lambda_k$ and $\lambda_1+\lambda_2+\ldots+\lambda_k=n$. It follows that the nilpotent orbits in $\mathfrak{sl}_n(\mathbb{C})$ are explicitly parametrized by the partitions of $n$. In particular, there are only finitely many nilpotent $\SL_n(\mathbb{C})$-orbits.
\end{example}

This last point about finiteness turns out to hold in much greater generality. More precisely, it turns out there are only finitely many nilpotent $G$-orbits. The proof of this fact, however, is far more complicated than what appears in Example \ref{Example: Type A nilpotent orbits}. One approach uses ideas related to the Springer resolution and Steinberg variety (see \cite[Sect. 3.3]{Chriss} for more details). Another strategy is to classify nilpotent orbits using so-called ``weighted Dynkin diagrams'' and appeal to the fact that there are only finitely many of the latter (see \cite[Chapt. 3]{Collingwood}). Here, we will see the finiteness of nilpotent orbits as a direct consequence of Kostant's work on the adjoint quotient.  

\begin{theorem}\label{Theorem: Finitely many nilpotent orbits}
There are only finitely many nilpotent orbits.
\end{theorem}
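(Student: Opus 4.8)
The plan is to realize the nilpotent cone $\mathcal{N}$ as a single fibre of the adjoint quotient $\Phi:\mathfrak{g}\rightarrow\Spec(\mathbb{C}[\mathfrak{g}]^G)$ and then apply Kostant's fibrewise description, Theorem \ref{Theorem: Kostant's theorem}. First I would invoke the equivalence (i)$\Leftrightarrow$(iv) of Proposition \ref{Proposition: Characterizations of nilpotent elements}: an element $x\in\mathfrak{g}$ is nilpotent precisely when $f(x)=f(0)$ for every $f\in\mathbb{C}[\mathfrak{g}]^G$. Writing $\mathbb{C}[\mathfrak{g}]^G=\mathbb{C}[\chi_1,\ldots,\chi_r]$ as in \ref{Subsection: The adjoint quotient}, the condition ``$f(x)=f(0)$ for all $f\in\mathbb{C}[\mathfrak{g}]^G$'' is equivalent to ``$\chi_i(x)=\chi_i(0)$ for all $i$'', i.e. to $\Phi(x)=\Phi(0)$. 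Hence $\mathcal{N}=\Phi^{-1}(\Phi(0))$ is exactly one fibre of $\Phi$.

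With this identification in hand, the conclusion is immediate: Theorem \ref{Theorem: Kostant's theorem} applied with $z=\Phi(0)$ asserts that $\Phi^{-1}(\Phi(0))=\mathcal{N}$ is a union of finitely many adjoint orbits. Since $\mathcal{N}$ is $G$-invariant, each of these orbits lies in $\mathcal{N}$ and is therefore a nilpotent orbit, and conversely every nilpotent orbit is among them; so there are only finitely many nilpotent orbits.

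There is no serious obstacle here, since all the substantive content is packaged into Theorem \ref{Theorem: Kostant's theorem} and Proposition \ref{Proposition: Characterizations of nilpotent elements}; the only point requiring care is the bookkeeping that $\mathcal{N}$ is precisely one fibre rather than a union of several. The containment $\mathcal{N}\subseteq\Phi^{-1}(\Phi(0))$ is automatic from the ``for all $f$'' quantifier in condition (iv), and the reverse containment holds because the $\chi_i$ generate $\mathbb{C}[\mathfrak{g}]^G$. If one prefers, one may further observe that the $\chi_i$ are homogeneous of positive degree, whence $\Phi(0)=0$ and $\mathcal{N}=\Phi^{-1}(0)$; but this refinement is not needed for the finiteness statement.
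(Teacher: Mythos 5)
Your proposal is correct and follows essentially the same route as the paper: both identify $\mathcal{N}$ with the fibre of the adjoint quotient over the image of $0$ via condition (iv) of Proposition \ref{Proposition: Characterizations of nilpotent elements} together with the generators $\chi_1,\ldots,\chi_r$, and then invoke Theorem \ref{Theorem: Kostant's theorem}. Your closing remark that the $\chi_i$ are homogeneous of positive degree, so that $\Phi(0)=0$, is exactly the detail the paper uses implicitly when it writes the fibre as $\Phi^{-1}(0)$.
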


\begin{proof}
Recall the description of the adjoint quotient given in \eqref{Equation: The second adjoint quotient}. It will suffice to prove that $\Phi^{-1}(0)=\mathcal{N}$, since Theorem \ref{Theorem: Kostant's theorem} will then show $\mathcal{N}$ to be a union of finitely many adjoint orbits. To this end, Proposition \ref{Proposition: Characterizations of nilpotent elements} implies that $x\in\mathfrak{g}$ is nilpotent if and only if $f(x)=f(0)$ for all $f\in\mathbb{C}[\mathfrak{g}]^G$. Letting $\chi_1,\chi_2,\ldots,\chi_r\in\mathbb{C}[\mathfrak{g}]^G$ be the homogeneous generators mentioned in \ref{Subsection: The adjoint quotient}, this is equivalent to $\chi_j(x)=0$ for all $j=1,\ldots,n$, ie. $\Phi(x)=0$. The preceding argument establishes that $\Phi^{-1}(0)=\mathcal{N}$, as desired. 
\end{proof}

\subsection{The closure order on nilpotent orbits}\label{Subsection: The closure order on nilpotent orbits}
In what follows, we will see that nilpotent orbits are profitably studied in the context of the closure order (see \ref{Subsection: The closure order}) on the set of adjoint $G$-orbits. To help formulate this, recall that a subset $Q$ of a poset is called a ``connected component'' if it is maximal with respect to the following property: if $p,q\in Q$, then there exists a sequence $r_1,\ldots,r_k\in Q$ with $r_1=p$, $r_k=q$ and for all $j\in\{1,\ldots,k-1\}$, $r_j\leq r_{j+1}$ or $r_{j}\geq r_{j+1}$ (ie. $p$ and $q$ are ``connected'' by a sequence of comparisons in $Q$). It turns out that the nilpotent orbits form a ``connected component'' in the poset of adjoint orbits.

\begin{proposition}\label{Proposition: Nilpotent orbits form a connected component}
The nilpotent orbits form a connected component in the poset of adjoint orbits.	
\end{proposition}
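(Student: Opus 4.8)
The plan is to exploit the fact that the zero orbit $\{0\}$ is a common lower bound for every nilpotent orbit in the closure order, together with the observation that no orbit comparable to a nilpotent orbit can fail to be nilpotent. First I would note that $\{0\}$ is itself a nilpotent orbit (it is an adjoint orbit since $\Adj_g(0)=0$ for all $g\in G$, and it is nilpotent since $\adj_0=0$), and that Proposition \ref{Proposition: Characterizations of nilpotent orbits} — specifically the equivalence of (i) and (iii) — asserts precisely that $0\in\overline{\mathcal{O}}$ for every nilpotent orbit $\mathcal{O}$, i.e. $\{0\}\leq\mathcal{O}$. Hence any two nilpotent orbits $\mathcal{O}_1,\mathcal{O}_2$ are connected in the sense of the definition by the length-two chain $\mathcal{O}_1\geq\{0\}\leq\mathcal{O}_2$, so the set of nilpotent orbits is internally connected.

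Next I would establish maximality, which reduces to the claim that any adjoint orbit comparable (in either direction) to a nilpotent orbit is itself nilpotent. Suppose $\mathcal{O}$ is nilpotent. If $\mathcal{O}'\leq\mathcal{O}$, then $\mathcal{O}'\subseteq\overline{\mathcal{O}}\subseteq\mathcal{N}$, using that $\mathcal{N}$ is closed (see \ref{Subsection: semisimple and nilpotent elements}) and $\mathcal{O}\subseteq\mathcal{N}$; hence $\mathcal{O}'$ is nilpotent. If instead $\mathcal{O}\leq\mathcal{O}'$, then $\overline{\mathcal{O}}\subseteq\overline{\mathcal{O}'}$, so $0\in\overline{\mathcal{O}}\subseteq\overline{\mathcal{O}'}$, and Proposition \ref{Proposition: Characterizations of nilpotent orbits} again gives that $\mathcal{O}'$ is nilpotent.

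Finally I would assemble these facts: if $Q$ is a connected component of the poset of adjoint orbits containing a nilpotent orbit, then a straightforward induction on the length of a connecting chain of comparisons — applying the previous paragraph at each step — shows that every element of $Q$ is nilpotent, so $Q$ is contained in the set of nilpotent orbits; combined with the first paragraph (which shows the nilpotent orbits are connected), this forces $Q$ to be exactly the set of nilpotent orbits. I do not expect a serious obstacle; the only point meriting care is applying the definition of \emph{connected component} correctly, namely verifying both connectedness and maximality, each of which follows cleanly from the characterization $0\in\overline{\mathcal{O}}$ and the closedness of the nilpotent cone.
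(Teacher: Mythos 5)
Your proposal is correct and follows essentially the same route as the paper's proof: connectedness via the common lower bound $\{0\}\leq\mathcal{O}$ from Proposition \ref{Proposition: Characterizations of nilpotent orbits}(iii), the downward comparison handled by the closedness of $\mathcal{N}$, and the upward comparison by $0\in\overline{\mathcal{O}}\subseteq\overline{\Theta}$. The only difference is that you make the final assembly (induction on the length of a connecting chain) explicit where the paper leaves it implicit.
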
	

\begin{proof}
To begin, $\{0\}$ is clearly a nilpotent orbit and Proposition \ref{Proposition: Characterizations of nilpotent orbits} (iii) implies that $\{0\}\leq\mathcal{O}$ for all nilpotent orbits $\mathcal{O}$. In other words, all nilpotent orbits are ``connected'' to $\{0\}$ in the above-defined sense. In particular, any two nilpotent orbits are connected by a sequence of comparisons with other nilpotent orbits. It now remains only to show maximality, namely that if a nilpotent orbit $\mathcal{O}$ and an adjoint orbit $\Theta$ satisfy $\Theta\leq\mathcal{O}$ or $\mathcal{O}\leq\Theta$, then $\Theta$ is a nilpotent orbit. To this end, suppose $\mathcal{O}$ and $\Theta$ are nilpotent and adjoint orbits, respectively. If $\Theta\leq\mathcal{O}$, then by definition $\Theta\subseteq\overline{\mathcal{O}}$. Since $\mathcal{N}$ is a closed subvariety of $\mathfrak{g}$ (see \ref{Subsection: semisimple and nilpotent elements}) containing $\mathcal{O}$, we must have $\overline{\mathcal{O}}\subseteq\mathcal{N}$. Hence $\Theta\subseteq\mathcal{N}$, meaning that $\Theta$ is a nilpotent orbit. For the other case, assume that $\mathcal{O}\leq\Theta$. It follows by definition that $\mathcal{O}\subseteq\overline{\Theta}$, and therefore also $\overline{\mathcal{O}}\subseteq\overline{\Theta}$. Since $0\in\overline{\mathcal{O}}$ (see Proposition \ref{Proposition: Characterizations of nilpotent orbits} (iii)), $0\in\overline{\Theta}$ and Proposition \ref{Proposition: Characterizations of nilpotent orbits} shows that     
$\Theta$ is a nilpotent orbit.  
\end{proof}

\begin{remark}
There is no immediate analogue of Proposition \ref{Proposition: Nilpotent orbits form a connected component} for semisimple orbits. For one thing, it is possible for $\mathcal{O}\leq\Theta$ to hold when $\mathcal{O}$ is a semisimple orbit and $\Theta$ is a non-semisimple adjoint orbit. Indeed, let $\Theta$ be any non-semisimple adjoint orbit. Lemma \ref{Lemma: Unique semisimple orbit in closure} implies that there exists a unique semisimple orbit $\mathcal{O}$ satisfying $\mathcal{O}\subseteq\overline{\Theta}$. By construction, $\mathcal{O}\leq\Theta$.    
\end{remark}

\begin{example}\label{Example: Nilpotent orbits and the dominance order}
Let us describe the poset of nilpotent orbits in $\mathfrak{sl}_n(\mathbb{C})$. To this end, recall that Example \ref{Example: Type A nilpotent orbits} uses the partitions of $n$ to explicitly parametrize the nilpotent $\SL_n(\mathbb{C})$-orbits. It follows that the closure order on these nilpotent orbits corresponds to a partial order on the partitions of $n$, which Gerstenhaber \cite[Chapt. 1, Thm. 2]{Gerstenhaber} and Hesselink \cite[Thm. 3.10]{Hesselink} showed to coincide with the \textit{dominance order} on partitions. Two partitions $\lambda=(\lambda_1,\lambda_2,\ldots,\lambda_k)$ and $\mu=(\mu_1,\mu_2,\ldots,\mu_{\ell})$ of $n$ satisfy $\lambda\leq\mu$ in the dominance order precisely when for all $p\in\{1,\ldots,n\}$, $\lambda_1+\lambda_2+\ldots+\lambda_p\leq\mu_1+\mu_2+\ldots+\mu_p$, where we define $\lambda_j:=0$ for $j>k$ and $\mu_{j}:=0$ for $j>\ell$.  
\end{example}

\subsection{The regular nilpotent orbit}\label{Subsection: The regular nilpotent orbit}
Having discussed nilpotent orbits in general, we turn our attention to a few particularly notable orbits. For the first of these, recall that the proof of Theorem \ref{Theorem: Finitely many nilpotent orbits} shows $\mathcal{N}$ to be a fibre of the adjoint quotient. It then follows from Theorem \ref{Theorem: Kostant's theorem} that $\mathcal{N}$ contains a unique regular adjoint orbit. This amounts to the existence of a unique regular nilpotent orbit, which we shall denote by $\mathcal{O}_{\text{reg}}$. Alternatively, one can characterize $\mathcal{O}_{\text{reg}}$ via the closure order as follows.

\begin{proposition}\label{Proposition: Unique maximal}
In the poset of nilpotent orbits with the closure order, $\mathcal{O}_{\emph{reg}}$ is the unique maximal element.
\end{proposition}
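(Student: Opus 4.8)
The plan is to deduce the statement directly from Kostant's description of the fibres of the adjoint quotient (Theorem \ref{Theorem: Kostant's theorem}) together with the identification $\mathcal{N}=\Phi^{-1}(0)$ obtained in the proof of Theorem \ref{Theorem: Finitely many nilpotent orbits}. The essential point is that $\mathcal{O}_{\text{reg}}$, being by construction the unique regular adjoint orbit contained in the fibre $\Phi^{-1}(0)=\mathcal{N}$, is open and dense in $\mathcal{N}$ by Theorem \ref{Theorem: Kostant's theorem}.

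The first step is to show that $\overline{\mathcal{O}_{\text{reg}}}=\mathcal{N}$. Since $\mathcal{N}$ is a closed subvariety of $\mathfrak{g}$ (see \ref{Subsection: semisimple and nilpotent elements}), the closure of $\mathcal{O}_{\text{reg}}$ in $\mathfrak{g}$ coincides with its closure in $\mathcal{N}$; the latter equals $\mathcal{N}$ because $\mathcal{O}_{\text{reg}}$ is dense in $\mathcal{N}$. The second step is to recall that every nilpotent orbit $\mathcal{O}$ satisfies $\mathcal{O}\subseteq\mathcal{N}$ by definition. Combining these two observations gives $\mathcal{O}\subseteq\overline{\mathcal{O}_{\text{reg}}}$, that is, $\mathcal{O}\leq\mathcal{O}_{\text{reg}}$ in the closure order. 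Thus $\mathcal{O}_{\text{reg}}$ is the greatest element of the poset of nilpotent orbits, and in particular its unique maximal element: any maximal orbit $\mathcal{O}$ must satisfy $\mathcal{O}\leq\mathcal{O}_{\text{reg}}$, hence $\mathcal{O}=\mathcal{O}_{\text{reg}}$ by maximality.

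There is essentially no obstacle in this argument; the substantive work has already been carried out in establishing Theorem \ref{Theorem: Kostant's theorem} and the equality $\mathcal{N}=\Phi^{-1}(0)$. The only point requiring momentary care is the passage from ``$\mathcal{O}_{\text{reg}}$ is dense in $\mathcal{N}$'' to ``$\overline{\mathcal{O}_{\text{reg}}}=\mathcal{N}$'', which relies on $\mathcal{N}$ being closed in $\mathfrak{g}$; note also that finiteness of the poset (Theorem \ref{Theorem: Finitely many nilpotent orbits}) is not actually needed for the conclusion.
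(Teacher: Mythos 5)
Your proposal is correct and follows essentially the same route as the paper's own proof: both deduce from Theorem \ref{Theorem: Kostant's theorem} and the equality $\mathcal{N}=\Phi^{-1}(0)$ that $\mathcal{O}_{\text{reg}}$ is dense in the closed set $\mathcal{N}$, hence $\overline{\mathcal{O}_{\text{reg}}}=\mathcal{N}$ contains every nilpotent orbit. The only difference is that you spell out the density-to-closure step and the greatest-versus-maximal distinction a bit more explicitly, which is harmless.
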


\begin{proof}
Let $\mathcal{O}$ be any nilpotent orbit. Since $\mathcal{O}_{\text{reg}}$ is dense in $\mathcal{N}$ (by Theorem \ref{Theorem: Kostant's theorem}), we have $\mathcal{O}\subseteq\mathcal{N}=\overline{\mathcal{O}_{\text{reg}}}$. Hence $\mathcal{O}\leq\mathcal{O}_{\text{reg}}$, as desired.  
\end{proof}

\begin{example}
Recall that Example \ref{Example: Type A nilpotent orbits} used partitions of $n$ to index nilpotent $\SL_n(\mathbb{C})$-orbits, and Example \ref{Example: Nilpotent orbits and the dominance order} in turn mentioned the correspondence between the dominance order on partitions of $n$ and the closure order on nilpotent $\SL_n(\mathbb{C})$-orbits. Furthermore, one readily sees that the partition $\lambda=(n)$ is the unique maximal element in the dominance order. It follows from Proposition \ref{Proposition: Unique maximal} that $\mathcal{O}_{\text{reg}}\subseteq\mathfrak{sl}_n(\mathbb{C})$ is indexed by $\lambda=(n)$, meaning that $\mathcal{O}_{\text{reg}}$ is the conjugacy class of the $n\times n$ nilpotent Jordan block.   
\end{example}

This last example raises the question of whether, in general, $\mathcal{O}_{\text{reg}}$ has any standard representatives. Kostant \cite{Kostant} answered this question in the affirmative, but we will need to recall a few facts about the representation theory of $\mathfrak{sl}_2(\mathbb{C})$ before elaborating on his argument. Let $\phi:\mathfrak{sl}_2(\mathbb{C})\rightarrow\mathfrak{gl}(V)$ be an $\mathfrak{sl}_2(\mathbb{C})$-representation, ie. $V$ is a finite-dimensional complex vector space and $\phi$ is a Lie algebra morphism. One knows such a representation to be completely reducible, meaning that $$V=\bigoplus_{k=1}^n V_k$$ for irreducible $\mathfrak{sl}_2(\mathbb{C})$-subrepresentations $V_k\subseteq V$. Now let $X,H,Y\in\mathfrak{sl}_2(\mathbb{C})$ be the usual generators, as defined in \eqref{Equation: Usual sl2 generators}. Each subrepresentation $V_k$ decomposes into eigenspaces of $\phi(H)$, which are known to be one-dimensional. The eigenvalues of $\phi(H):V_k\rightarrow V_k$ are integers and form a string as follows: if $\lambda_k$ is the largest eigenvalue, then $\lambda_k\geq 0$ and $-\lambda_k,-\lambda_k+2,-\lambda_k+4,\ldots,\lambda_k-2,\lambda_k$ is a complete list of the eigenvalues. Finally, the last facts we will need are the following decompositions of the kernels of $\phi(X):V\rightarrow V$ and $\phi(H):V\rightarrow V$:
\begin{equation}\label{Equation: Kernel decomposition}\ker(\phi(X))=\bigoplus_{k=1}^nV_k(\lambda_k),
\end{equation}
\begin{equation}\label{Equation: Second kernel decomposition}\ker(\phi(H))=\bigoplus_{k=1}^nV_k(0),
\end{equation}
where $V_k(\lambda_k)$ and $V_k(0)$ denote the $\lambda_k$ and $0$-eigenspaces of $\phi(H):V_k\rightarrow V_k$, respectively.\footnote{Note that $0$ need not be an eigenvalue, so that $V_k(0)=\{0\}$ may hold.}   

\begin{proposition}\label{Proposition: Nilpotent orbit representative}
Recalling the notation used in Example \ref{Example: Kostant triple}, the nilpotent element $\xi=\sum_{\alpha\in\Pi}e_{\alpha}$ belongs to $\mathcal{O}_{\emph{reg}}$. 
\end{proposition}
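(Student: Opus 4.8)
The plan is to exploit the $\mathfrak{sl}_2(\mathbb{C})$-triple $(\xi,h,\eta)$ constructed in Example \ref{Example: Kostant triple}, together with the representation theory of $\mathfrak{sl}_2(\mathbb{C})$ recalled above. Since $\xi$ is the first element of an $\mathfrak{sl}_2(\mathbb{C})$-triple, it is nilpotent, so $\mathcal{O}(\xi)$ is certainly a nilpotent orbit; as $\mathcal{O}_{\text{reg}}$ is the \emph{unique} regular nilpotent orbit, it will suffice to prove that $\mathcal{O}(\xi)$ is regular, i.e. that $\dim C_{\mathfrak{g}}(\xi)=\text{rank}(G)$ (the inequality $\dim C_{\mathfrak{g}}(\xi)\geq\text{rank}(G)$ being automatic).

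First I would regard $\mathfrak{g}$ as an $\mathfrak{sl}_2(\mathbb{C})$-representation $\phi\colon\mathfrak{sl}_2(\mathbb{C})\to\mathfrak{gl}(\mathfrak{g})$ via $\phi(X)=\adj_\xi$, $\phi(H)=\adj_h$, $\phi(Y)=\adj_\eta$, and decompose $\mathfrak{g}=\bigoplus_{k=1}^{N}V_k$ into irreducible summands. Note that $C_{\mathfrak{g}}(\xi)=\ker(\phi(X))$ and $C_{\mathfrak{g}}(h)=\ker(\phi(H))$. The crucial observation is that every eigenvalue of $\adj_h$ on $\mathfrak{g}$ is an even integer: on $\mathfrak{t}$ it acts as $0$, while on a root space $\mathfrak{g}_\beta$ with $\beta=\sum_{\alpha\in\Pi}c_\alpha\alpha$ it acts as $\beta(h)=\sum_{\alpha\in\Pi}c_\alpha\,\alpha(h)=2\sum_{\alpha\in\Pi}c_\alpha$, using that $\alpha(h)=2$ for every $\alpha\in\Pi$ (as established in Example \ref{Example: Kostant triple}). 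Consequently each summand $V_k$ has even highest weight $\lambda_k$, and hence contains a one-dimensional zero weight space $V_k(0)$.

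Next I would count dimensions. By \eqref{Equation: Kernel decomposition}, $\dim C_{\mathfrak{g}}(\xi)=\dim\ker(\phi(X))=N$, the total number of irreducible summands. By \eqref{Equation: Second kernel decomposition} and the previous paragraph, $\dim C_{\mathfrak{g}}(h)=\dim\ker(\phi(H))=\sum_{k=1}^N\dim V_k(0)=N$ as well, since each $V_k(0)$ is one-dimensional. On the other hand $h$ is regular semisimple: for any root $\beta=\sum_{\alpha\in\Pi}c_\alpha\alpha$ the coefficients $c_\alpha$ are all non-negative or all non-positive and not all zero, so $\beta(h)=2\sum_{\alpha\in\Pi}c_\alpha\neq 0$; hence $C_{\mathfrak{g}}(h)=\mathfrak{t}$ and $N=\dim C_{\mathfrak{g}}(h)=\dim\mathfrak{t}=\text{rank}(G)$. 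Combining these equalities gives $\dim C_{\mathfrak{g}}(\xi)=N=\text{rank}(G)$, so $\dim\mathcal{O}(\xi)=\dim(G)-\text{rank}(G)$ and $\mathcal{O}(\xi)$ is regular, hence equal to $\mathcal{O}_{\text{reg}}$.

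The argument is essentially bookkeeping with $\mathfrak{sl}_2(\mathbb{C})$-modules; the one point requiring care — and the conceptual heart of the proof — is the parity observation $\alpha(h)=2$ for all simple roots $\alpha$, which forces all $\adj_h$-eigenvalues to be even and thereby makes the two kernel dimensions $\dim\ker(\phi(X))$ and $\dim\ker(\phi(H))$ coincide. Everything else (complete reducibility, the weight-string structure of irreducible $\mathfrak{sl}_2(\mathbb{C})$-modules, and the regularity of $h$) is standard and has already been recalled in the text.
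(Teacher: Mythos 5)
Your proof is correct and follows essentially the same route as the paper's: the parity observation $\alpha(h)=2$ for all $\alpha\in\Pi$, the $\mathfrak{sl}_2(\mathbb{C})$-module bookkeeping via \eqref{Equation: Kernel decomposition} and \eqref{Equation: Second kernel decomposition} showing that $\dim\ker(\adj_{\xi})$ and $\dim\ker(\adj_h)$ both equal the number of irreducible summands, and finally the identification of that number with $\text{rank}(G)$. The only cosmetic difference is in the last step, where you verify $C_{\mathfrak{g}}(h)=\mathfrak{t}$ directly from $\beta(h)=2\sum_{\alpha\in\Pi}c_{\alpha}\neq 0$ for every root $\beta$, whereas the paper invokes the computation $C_G(h)=L_{\Pi(h)}=T$ from the proof of Corollary \ref{Corollary: The existence of regular orbits}; these amount to the same calculation.
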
        

\begin{proof}
This proof will begin with two intermediate claims. For the first, recall that Example \ref{Example: Kostant triple} includes $\xi$ in an explicit $\mathfrak{sl}_2(\mathbb{C})$-triple $(\xi,h,\eta)$. We claim that every eigenvalue of $\adj_h:\mathfrak{g}\rightarrow\mathfrak{g}$ is an even integer. However, since each summand appearing in the root space decomposition \eqref{Equation: Root space decomposition} of $\mathfrak{g}$ is $\adj_h$-invariant, it will suffice to establish that $\adj_h$ acts on each summand with only even eigenvalues. Now, since $\adj_h(x)=0$ for all $x\in\mathfrak{t}$, we see that $0$ is the unique eigenvalue of $\adj_h:\mathfrak{t}\rightarrow\mathfrak{t}$. For the root space summands, suppose $\beta\in\Delta$. Note that for all $x\in\mathfrak{g}_{\beta}$, we have $\adj_h(x)=\beta(h)x$. To see that the eigenvalue $\beta(h)$ is an even integer, let us write $\beta=\sum_{\alpha\in\Pi}c_{\alpha}\alpha$ for $c_{\alpha}\in\mathbb{Z}$, $\alpha\in\Pi$. Example \ref{Example: Kostant triple} shows that $\alpha(h)=2$ for all $\alpha\in\Pi$, meaning that $\beta(h)=2\sum_{\alpha\in\Pi}c_{\alpha}$ is indeed an even integer. Hence every eigenvalue of $\adj_h:\mathfrak{g}\rightarrow\mathfrak{g}$ is an even integer.

Our second claim is that $C_{\mathfrak{g}}(h)$ and $C_{\mathfrak{g}}(\xi)$ have the same dimension. To see this, we restrict the adjoint representation $\adj:\mathfrak{g}\rightarrow\mathfrak{gl}(\mathfrak{g})$ to a representation of the subalgebra $\mathfrak{a}:=\text{span}\{\xi,h,\eta\}\cong\mathfrak{sl}_2(\mathbb{C})$ on $\mathfrak{g}$. One can now think of $\mathfrak{g}$ as an $\mathfrak{sl}_2(\mathbb{C})$-representation in which $X,H,Y\in\mathfrak{sl}_2(\mathbb{C})$ are identified with $\xi,h,\eta\in\mathfrak{a}$, respectively. Now, decompose $\mathfrak{g}$ into irreducible $\mathfrak{a}$-subrepresentations,
\begin{equation}\label{Equation: Decomposition into sl2 reps}
\mathfrak{g}=\bigoplus_{k=1}^nV_k.
\end{equation}
Since the eigenvalues of $\adj_h:\mathfrak{g}\rightarrow\mathfrak{g}$ are even integers, the same is necessarily true of the eigenvalues of $\adj_h$ on each irreducible subrepresentation $V_k$. The string description of the latter eigenvalues (explained just before this proposition) implies that $0$ occurs as an eigenvalue in each $V_k$. Since $V_k(0)$ is then one-dimensional for each $k$, \eqref{Equation: Second kernel decomposition} implies that $\dim(\ker(\adj_h))$ equals $n$, the number of irreducible summands in \eqref{Equation: Decomposition into sl2 reps}. A similar argument uses \eqref{Equation: Kernel decomposition} to show that $\dim(\ker(\adj_{\xi}))=n$. Of course, as $\ker(\adj_h)=C_{\mathfrak{g}}(h)$ and $\ker(\adj_{\xi})=C_{\mathfrak{g}}(\xi)$, we have actually shown that $\dim(C_{\mathfrak{g}}(h))=\dim(C_{\mathfrak{g}}(\xi))$.

We may now give a direct proof of our proposition, which is equivalent to the statement that $\mathcal{O}(\xi)=\mathcal{O}_{\text{reg}}$. Now, as $\xi$ is nilpotent and $\mathcal{O}_{\text{reg}}$ is the unique regular nilpotent orbit, this is equivalent to $\mathcal{O}(\xi)$ being a regular adjoint orbit. This is in turn equivalent to $\dim(C_G(\xi))=\text{rank}(G)$, since $\dim(\mathcal{O}(\xi))=\dim(G/C_G(\xi))=\dim(G)-\dim(C_G(\xi))$. At the same time, we know $\dim(C_G(\xi))=\dim(C_{\mathfrak{g}}(\xi))=\dim(C_{\mathfrak{g}}(h))=\dim(C_G(h))$. Our task is therefore to show that $\dim(C_{G}(h))=\text{rank}(G)$. However, the proof of Corollary \ref{Corollary: The existence of regular orbits} explains that $C_G(h)=L_{\Pi(h)}=T$ for our choice of $h$, so that the $\dim(C_{G}(h))=\dim(T)=\text{rank}(G)$. This completes the proof.            
\end{proof}

\begin{remark}
Kostant found a curious relationship between the decomposition \eqref{Equation: Decomposition into sl2 reps} and the topology of $G$. To formulate it, note that the last two paragraphs in the proof of Proposition \ref{Proposition: Nilpotent orbit representative} can be used to show that \eqref{Equation: Decomposition into sl2 reps} has exactly $r:=\text{rank}(G)$ irreducible summands. If $d_1,d_2,\ldots,d_r$ are their respective dimensions, then the Poincar\'{e} polynomial of $G$ factors as follows:
\begin{equation}\label{Equation: Poincare polynomial}P_G(t)=\prod_{j=1}^r(1+t^{d_j})
\end{equation}
(see \cite[Cor. 8.7]{Kostant}).\footnote{Strictly speaking, Kostant's result is that \eqref{Equation: Poincare polynomial} is the Poincar\'{e} polynomial of the adjoint group of $\mathfrak{g}$. The adjoint group is just the quotient of $G$ by its (finite) centre, and one can use this fact to show that \eqref{Equation: Poincare polynomial} must also be the Poincar\'{e} polynomial of $G$.}   
\end{remark}

\subsection{The minimal nilpotent orbit}\label{Subsection: The minimal nilpotent orbit}
Having studied $\mathcal{O}_{\text{reg}}$ as the unique maximal element in the poset of nilpotent orbits (see Proposition \ref{Proposition: Unique maximal}), it is natural to wonder about an analogous result at the opposite extreme --- the existence of a unique minimal nilpotent orbit. However, a first investigation yields underwhelming results. The singleton $\{0\}$ is the unique minimal nilpotent orbit, since Proposition \ref{Proposition: Characterizations of nilpotent orbits} implies that $\{0\}\subseteq\overline{\mathcal{O}}$ for all nilpotent orbits $\mathcal{O}$. It is therefore a natural next step to study the minimal non-zero nilpotent orbits. Interestingly, there turns out to be a unique such orbit when $G$ is simple. At the same time, $G$ being simple implies the existence of a unique $\theta\in\Delta$ with the property of being maximal among all roots in the partial order \eqref{Equation: Partial order on the weight lattice} (see \cite[Thm. 18.9.2]{Tauvel}). We then have the following theorem, which is essentially a restatement of Theorem 4.3.3 in \cite{Collingwood}.  

\begin{theorem}\label{Theorem: Existence of a minimal nilpotent orbit}
If $G$ is simple and $e_{\theta}\in\mathfrak{g}_{\theta}\setminus\{0\}$, then $\mathcal{O}(e_{\theta})$ is the unique minimal element in the poset of non-zero nilpotent orbits.
\end{theorem}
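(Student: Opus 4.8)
The plan is to prove the (a priori stronger) statement that $\mathcal{O}(e_{\theta})$ is contained in the closure of every non-zero nilpotent orbit. Since $\theta\in\Delta_+$ we have $e_{\theta}\in\bigoplus_{\alpha\in\Delta_+}\mathfrak{g}_{\alpha}$, so $e_{\theta}$ is nilpotent (see \ref{Subsection: semisimple and nilpotent elements}) and $e_{\theta}\neq 0$, whence $\mathcal{O}(e_{\theta})$ is itself a non-zero nilpotent orbit; granting the claim, $\mathcal{O}(e_{\theta})\leq\mathcal{O}$ in the closure order for every non-zero nilpotent orbit $\mathcal{O}$, so $\mathcal{O}(e_{\theta})$ is the minimum, hence the unique minimal element, of the poset in question. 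So fix a non-zero nilpotent orbit $\mathcal{O}$. By Proposition \ref{Proposition: Characterizations of nilpotent orbits}, $\mathcal{O}$ is invariant under the dilation action of $\mathbb{C}^*$ and $0\in\overline{\mathcal{O}}$; therefore $\overline{\mathcal{O}}$ is a closed, $G$-stable, $\mathbb{C}^*$-stable cone in $\mathfrak{g}$, and it is non-trivial because $\mathcal{O}\neq\{0\}$. Consequently the image of $\overline{\mathcal{O}}\setminus\{0\}$ in $\mathbb{P}(\mathfrak{g})$, which I will write $\mathbb{P}(\overline{\mathcal{O}}\setminus\{0\})$, is a non-empty closed $G$-stable subvariety of $\mathbb{P}(\mathfrak{g})$.

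The heart of the argument is the identification of the closed $G$-orbits in $\mathbb{P}(\mathfrak{g})$. Since $\mathbb{P}(\overline{\mathcal{O}}\setminus\{0\})$ is a non-empty complete $G$-variety, it contains a $G$-orbit of minimal dimension, and such an orbit is closed by Proposition \ref{Proposition: orbit boundary}. I claim that $G\cdot[e_{\theta}]$ is the unique closed $G$-orbit in all of $\mathbb{P}(\mathfrak{g})$. Indeed, any closed $G$-orbit is a complete variety on which the connected solvable group $B$ acts, so by the Borel fixed point theorem it contains a $B$-fixed point, i.e. a $B$-stable line in $\mathfrak{g}$. Since $G$ is simple, $\mathfrak{g}$ is an irreducible $G$-representation whose highest weight is the highest root $\theta$, and in an irreducible finite-dimensional representation the only $B$-stable line is the highest weight line; thus the unique $B$-stable line in $\mathfrak{g}$ is $\mathfrak{g}_{\theta}$. (Alternatively, one checks directly from $[\mathfrak{g}_{\alpha},\mathfrak{g}_{\beta}]\subseteq\mathfrak{g}_{\alpha+\beta}$ and the non-vanishing of root-space brackets that $\mathfrak{g}_{\mu}$ is $B$-stable exactly when $\mu$ is a root with $\mu+\alpha\notin\Delta\cup\{0\}$ for all $\alpha\in\Delta_+$, and that irreducibility of the root system forces $\mu=\theta$.) It follows that every closed $G$-orbit in $\mathbb{P}(\mathfrak{g})$ contains $[e_{\theta}]$, hence equals $G\cdot[e_{\theta}]$, and in particular $G\cdot[e_{\theta}]$ is itself closed.

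Combining the last two observations, the closed $G$-orbit contained in $\mathbb{P}(\overline{\mathcal{O}}\setminus\{0\})$ must be $G\cdot[e_{\theta}]$; in particular $[e_{\theta}]\in\mathbb{P}(\overline{\mathcal{O}}\setminus\{0\})$, so $te_{\theta}\in\overline{\mathcal{O}}$ for some $t\in\mathbb{C}^*$. Because $\overline{\mathcal{O}}$ is $\mathbb{C}^*$-stable (it is the closure of the $\mathbb{C}^*$-stable set $\mathcal{O}$), we get $e_{\theta}\in\overline{\mathcal{O}}$, and then $\mathcal{O}(e_{\theta})\subseteq\overline{\mathcal{O}}$ since $\overline{\mathcal{O}}$ is $G$-stable. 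Thus $\mathcal{O}(e_{\theta})\leq\mathcal{O}$, which, in view of the reduction in the first paragraph, completes the proof.

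The step I expect to be the main obstacle is the classification of $B$-stable lines in $\mathfrak{g}$ — equivalently, establishing that $[e_{\theta}]$ is the unique $B$-fixed point of $\mathbb{P}(\mathfrak{g})$ — since this is precisely where the simplicity hypothesis on $G$ enters, through the irreducibility of the adjoint representation and the uniqueness of the highest root, and it requires either invoking the structure of irreducible highest-weight representations or carrying out the root-combinatorial verification sketched above.
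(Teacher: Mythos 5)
Your proof is correct: realizing $\overline{\mathcal{O}}$ as a nontrivial closed $G$- and $\mathbb{C}^*$-stable cone, passing to its projectivization, extracting a closed $G$-orbit there, and pinning down its $B$-fixed point as $[e_{\theta}]$ via the irreducibility of the adjoint representation is a complete argument, and every ingredient (Proposition \ref{Proposition: orbit boundary}, Proposition \ref{Proposition: Characterizations of nilpotent orbits}, the Borel fixed point theorem, the uniqueness of the highest-weight line) is used legitimately. The paper itself offers no proof of this theorem, deferring to Theorem 4.3.3 of Collingwood--McGovern; your argument is essentially the standard one given in that reference, so there is nothing to correct.
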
 

One denotes the above orbit by $\mathcal{O}_{\text{min}}$ and calls it the \textit{minimal nilpotent orbit}. Note that being minimal in the poset of non-zero nilpotent orbits amounts to the condition $\overline{\mathcal{O}_{\text{min}}}=\mathcal{O}_{\text{min}}\cup\{0\}$. 

\begin{example}
Let $G=\SL_n(\mathbb{C})$, a simple group. Recall from Example \ref{Example: Nilpotent orbits and the dominance order} the poset isomorphism between nilpotent $\SL_n(\mathbb{C})$-orbits in the closure order and partitions of $n$ in the dominance order. While $\lambda=(1,1,\ldots,1)$ (which corresponds to the orbit $\{0\}$) is clearly the unique minimal partition of $n$, $\mu=(2,1,1,\ldots,1)$ is seen to be minimal once $\mu$ is excluded. It follows that $\mu$ corresponds to $\mathcal{O}_{\text{min}}\subseteq\mathfrak{sl}_n(\mathbb{C})$, or equivalently that $\mathcal{O}_{\text{min}}$ is the conjugacy class of a nilpotent Jordan canonical form matrix having a single $2\times 2$ Jordan block and $0$ for all other entries.  
\end{example}

Theorem \ref{Theorem: Existence of a minimal nilpotent orbit} assumes $G$ is simple, so one might legitimately ask whether a unique minimal non-zero nilpotent orbit exists when $G$ is semisimple. The following example shows this to be false in general.   

\begin{example}
Assume that $G$ is simple, in which case the product $G\times G$ is semisimple but not simple. The latter group has Lie algebra $\mathfrak{g}\oplus\mathfrak{g}$, and the adjoint representation $\Adj:G\times G\rightarrow\GL(\mathfrak{g}\oplus\mathfrak{g})$ is as follows:
$$\Adj_{(g_1,g_2)}(x_1,x_2)=(\Adj_{g_1}(x_1),\Adj_{g_2}(x_2))$$ for all $(g_1,g_2)\in G\times G$ and $(x_1,x_2)\in\mathfrak{g}\oplus\mathfrak{g}$, where (via a slight abuse of notation) $\Adj_{g_1}(x_1)$ and $\Adj_{g_2}(x_2)$ come from the adjoint representation of $G$. Now, let $\theta\in\Delta$ and $e_{\theta}\in\mathfrak{g}_{\theta}\setminus\{0\}$ be as discussed above. It follows that the adjoint $G\times G$-orbits of $(e_{\theta},0)$ and $(0,e_{\theta})$ are $$\mathcal{O}_{G\times G}(e_{\theta},0)=\mathcal{O}_{\text{min}}\times\{0\}\quad\text{and}\quad\mathcal{O}_{G\times G}(0,e_{\theta})=\{0\}\times\mathcal{O}_{\text{min}},$$
respectively, where $\mathcal{O}_{\text{min}}\subseteq\mathfrak{g}$ is the minimal nilpotent $G$-orbit. Taking closures, we have 
$$\overline{\mathcal{O}_{G\times G}(e_{\theta},0)}=\overline{\mathcal{O}_{\text{min}}}\times\{0\}=(\mathcal{O}_{\text{min}}\cup\{0\})\times\{0\}=\mathcal{O}_{G\times G}(e_{\theta},0)\cup\{(0,0)\}$$ and
$$\overline{\mathcal{O}_{G\times G}(0,e_{\theta})}=\{0\}\times\overline{\mathcal{O}_{\text{min}}}=\{0\}\times(\mathcal{O}_{\text{min}}\cup\{0\})=\{(0,0)\}\cup\mathcal{O}_{G\times G}(0,e_{\theta}).$$ These calculations establish two things. Firstly, since the closures of $\mathcal{O}_{G\times G}(e_{\theta},0)$ and $\mathcal{O}_{G\times G}(0,e_{\theta})$ contain $(0,0)$, each orbit is nilpotent (see Proposition \ref{Proposition: Characterizations of nilpotent orbits}). Secondly, for each orbit, $\{(0,0)\}$ is the unique orbit that is strictly smaller in the closure order. Equivalently, $\mathcal{O}_{G\times G}(e_{\theta},0)$ and $\mathcal{O}_{G\times G}(0,e_{\theta})$ are minimal among the non-zero nilpotent orbits in the closure order. It is nevertheless clear that $\mathcal{O}_{G\times G}(e_{\theta},0)\neq\mathcal{O}_{G\times G}(0,e_{\theta})$.
\end{example}

\subsection{Orbit projectivizations}\label{Subsection: Orbit projectivizations}
Recall from Proposition \ref{Proposition: Characterizations of nilpotent orbits} that one can characterize nilpotent orbits as those adjoint orbits which are invariant under the dilation action of $\mathbb{C}^*$ on $\mathfrak{g}$. We will see that this fact gives rise to a special geometry enjoyed only by the nilpotent orbits. Indeed, given a non-zero nilpotent orbit $\mathcal{O}\subseteq\mathfrak{g}$, we may define its \textit{projectivization} to be the quotient 
$$\mathbb{P}(\mathcal{O}):=\mathcal{O}/\mathbb{C}^*.$$ Note that $\mathbb{P}(\mathcal{O})$ is, by construction, a subset of the projective space $\mathbb{P}(\mathfrak{g})=(\mathfrak{g}\setminus\{0\})/\mathbb{C}^*$. The latter is a $G$-variety with action map
$$G\times\mathbb{P}(\mathfrak{g})\rightarrow\mathbb{P}(\mathfrak{g}),\quad (g,[x])\mapsto[\Adj_g(x)],\quad g\in G, \text{ }[x]\in\mathbb{P}(\mathfrak{g})$$
(cf. Example \ref{Example: Descent to projective space}), and it is not difficult to check that $\mathbb{P}(\mathcal{O})$ is a $G$-orbit in $\mathbb{P}(\mathfrak{g})$. As such, $\mathbb{P}(\mathcal{O})$ is a smooth locally closed subvariety of $\mathbb{P}(\mathfrak{g})$ (see \ref{Subsection: Generalities on orbits}). However, $\mathbb{P}(\mathcal{O})$ is seldom closed.

\begin{proposition}\label{Proposition: Unique closed orbit}
Assume that $G$ is simple and let $\mathcal{O}\subseteq\mathfrak{g}$ be a non-zero nilpotent orbit. Then $\mathbb{P}(\mathcal{O})$ is closed in $\mathbb{P}(\mathfrak{g})$ if and only if $\mathcal{O}=\mathcal{O}_{\emph{min}}$.
\end{proposition}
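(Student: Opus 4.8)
The plan is to reduce the statement to the single fact that, for our non-zero nilpotent orbit $\mathcal{O}$,
$$\mathbb{P}(\mathcal{O})\text{ is closed in }\mathbb{P}(\mathfrak{g})\iff\overline{\mathcal{O}}=\mathcal{O}\cup\{0\},$$
where $\overline{\mathcal{O}}$ denotes the closure in $\mathfrak{g}$. For the implication ``$\Rightarrow$'' I would use only that the quotient map $\pi:\mathfrak{g}\setminus\{0\}\to\mathbb{P}(\mathfrak{g})$ is continuous: if $\mathbb{P}(\mathcal{O})$ is closed then $\mathcal{O}=\pi^{-1}(\mathbb{P}(\mathcal{O}))$ is closed in $\mathfrak{g}\setminus\{0\}$, so its closure in $\mathfrak{g}$ can only acquire the point $0$; since $\mathcal{O}$ is nilpotent, $0\in\overline{\mathcal{O}}$ by Proposition \ref{Proposition: Characterizations of nilpotent orbits}, giving $\overline{\mathcal{O}}=\mathcal{O}\cup\{0\}$. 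For ``$\Leftarrow$'' I would argue that $\overline{\mathcal{O}}$ is a closed $\mathbb{C}^*$-stable cone in $\mathfrak{g}\cong\mathbb{A}^{\dim\mathfrak{g}}$ (stable because $\mathcal{O}$ is, again by Proposition \ref{Proposition: Characterizations of nilpotent orbits}), hence cut out by a homogeneous ideal, so that $(\overline{\mathcal{O}}\setminus\{0\})/\mathbb{C}^*$ is a closed subvariety of $\mathbb{P}(\mathfrak{g})$; when $\overline{\mathcal{O}}=\mathcal{O}\cup\{0\}$ this subvariety is exactly $\mathbb{P}(\mathcal{O})$. Alternatively, one can bypass the homogeneous-ideal argument by noting $\pi$ is an open map, so that $\mathbb{P}(\mathfrak{g})\setminus\mathbb{P}(\mathcal{O})=\pi(\mathfrak{g}\setminus(\mathcal{O}\cup\{0\}))$ is open as soon as $\mathcal{O}\cup\{0\}$ is closed.

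With this equivalence in hand, it remains to show that $\overline{\mathcal{O}}=\mathcal{O}\cup\{0\}$ if and only if $\mathcal{O}=\mathcal{O}_{\text{min}}$. If $\overline{\mathcal{O}}=\mathcal{O}\cup\{0\}$, then every non-zero nilpotent orbit $\mathcal{O}'$ with $\mathcal{O}'\leq\mathcal{O}$ satisfies $\mathcal{O}'\subseteq\mathcal{O}\cup\{0\}$; since distinct orbits are disjoint and $\mathcal{O}'\neq\{0\}$, this forces $\mathcal{O}'=\mathcal{O}$, so $\mathcal{O}$ is minimal in the poset of non-zero nilpotent orbits and hence $\mathcal{O}=\mathcal{O}_{\text{min}}$ by the uniqueness assertion of Theorem \ref{Theorem: Existence of a minimal nilpotent orbit} (this is the one place simplicity of $G$ enters). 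Conversely, if $\mathcal{O}=\mathcal{O}_{\text{min}}$, then by Proposition \ref{Proposition: orbit boundary} the set $\overline{\mathcal{O}_{\text{min}}}\setminus\mathcal{O}_{\text{min}}$ is a union of orbits of strictly smaller dimension, each contained in the closed set $\mathcal{N}$ and so nilpotent; none of these can be non-zero, by minimality of $\mathcal{O}_{\text{min}}$, so $\overline{\mathcal{O}_{\text{min}}}\setminus\mathcal{O}_{\text{min}}=\{0\}$ (the point $0$ does lie in the closure, since $\mathcal{O}_{\text{min}}$ is nilpotent). This is exactly the remark recorded just after Theorem \ref{Theorem: Existence of a minimal nilpotent orbit}.

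The only real subtlety is the passage between closedness of $\mathbb{P}(\mathcal{O})$ in $\mathbb{P}(\mathfrak{g})$ and closedness of the affine cone $\mathcal{O}\cup\{0\}$ in $\mathfrak{g}$; once that dictionary is in place, the rest is routine bookkeeping with the closure order and an appeal to the already-known description of $\mathcal{O}_{\text{min}}$. I would take care to phrase the cone/projectivization correspondence so that it applies verbatim in both the Zariski and analytic topologies, since $\pi$ is continuous and open in either.
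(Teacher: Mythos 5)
Your proposal is correct, and it reaches the conclusion by a genuinely different (if closely related) route. The paper never proves the full two-way ``cone dictionary'' you set up; instead it works inside $\mathbb{P}(\mathcal{N})$ from the start, observes that a closed $G$-orbit there is the same thing as a minimal one in the closure order, and then shows $\mathbb{P}(\mathcal{O}_{\text{min}})\subseteq\overline{\mathbb{P}(\mathcal{O})}$ for \emph{every} non-zero nilpotent $\mathcal{O}$ by pushing the containment $\mathcal{O}_{\text{min}}\subseteq\overline{\mathcal{O}}\setminus\{0\}$ through the quotient map, using only the one continuity inclusion $\pi(\overline{\mathcal{O}}\setminus\{0\})\subseteq\overline{\pi(\mathcal{O})}$. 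You instead prove the clean equivalence ``$\mathbb{P}(\mathcal{O})$ closed in $\mathbb{P}(\mathfrak{g})$ $\iff$ $\overline{\mathcal{O}}=\mathcal{O}\cup\{0\}$'' (which needs both continuity and openness of $\pi$, plus the $\mathbb{C}^*$-saturation of $\mathcal{O}$ from Proposition \ref{Proposition: Characterizations of nilpotent orbits}(ii) -- note you also use that saturation silently in the forward direction when writing $\mathcal{O}=\pi^{-1}(\mathbb{P}(\mathcal{O}))$), and then finish entirely in $\mathfrak{g}$ with the closure order. Your version is more self-contained, valid for any semisimple $G$ up to the last step, and makes it transparent that simplicity enters only through the uniqueness assertion of Theorem \ref{Theorem: Existence of a minimal nilpotent orbit}; the paper's version buys a slightly stronger structural statement, namely that $\mathbb{P}(\mathcal{O}_{\text{min}})$ is the unique \emph{minimal} orbit of $\mathbb{P}(\mathcal{N})$, i.e.\ lies in the closure of every orbit there, not merely that it is the unique closed one.
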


\begin{proof}
The nilpotent cone $\mathcal{N}\subseteq\mathfrak{g}$ is $\mathbb{C}^*$-invariant, and one may define $$\mathbb{P}(\mathcal{N}):=(\mathcal{N}\setminus\{0\})/\mathbb{C}^*.$$ It is not difficult to see that $\mathbb{P}(\mathcal{N})$ is a $G$-invariant closed subvariety of $\mathbb{P}(\mathfrak{g})$ whose $G$-orbits are precisely the projectivizations of the non-zero nilpotent orbits. It will therefore be equivalent to prove that $\mathbb{P}(\mathcal{O}_{\text{min}})$ is the unique closed orbit in $\mathbb{P}(\mathcal{N})$.\footnote{Strictly speaking, this will only show $\mathbb{P}(\mathcal{O})$ to be closed if and only if $\mathbb{P}(\mathcal{O})=\mathbb{P}(\mathcal{O}_{\text{min}})$. We leave it as an exercise to check that $\mathbb{P}(\mathcal{O})=\mathbb{P}(\mathcal{O}_{\text{min}})$ if and only if $\mathcal{O}=\mathcal{O}_{\text{min}}$.} Now, remember that $\overline{\mathbb{P}(\mathcal{O})}$ is a union of $\mathbb{P}(\mathcal{O})$ and the $G$-orbits lying below $\mathbb{P}(\mathcal{O})$ in the closure order. Showing $\mathbb{P}(\mathcal{O}_{\text{min}})$ to be the unique closed orbit is therefore equivalent to showing it to be the unique minimal orbit in $\mathbb{P}(\mathcal{N})$. We shall do the latter.
 
Let $\mathcal{O}\subseteq\mathfrak{g}$ be any non-zero nilpotent orbit. Since $\mathcal{O}_{\text{min}}$ is the unique minimal non-zero nilpotent orbit in the closure order, it must be true that $\mathcal{O}_{\text{min}}\subseteq\overline{\mathcal{O}}\setminus\{0\}$. Also, because $\mathcal{N}$ is closed in $\mathfrak{g}$ and contains $\mathcal{O}$,  $\overline{\mathcal{O}}\setminus\{0\}$ is the closure of $\mathcal{O}$ in $\mathcal{N}\setminus\{0\}$. The continuity of the quotient map $\pi:\mathcal{N}\setminus\{0\}\rightarrow\mathbb{P}(\mathcal{N})$ therefore implies $\pi(\overline{\mathcal{O}}\setminus\{0\})\subseteq\overline{\pi(\mathcal{O})}$, and we have
$$\mathbb{P}(\mathcal{O}_{\text{min}})=\pi(\mathcal{O}_{\text{min}})\subseteq\pi(\overline{\mathcal{O}}\setminus\{0\})\subseteq\overline{\pi(\mathcal{O})}=\overline{\mathbb{P}(\mathcal{O})}.$$
In other words, $\mathbb{P}(\mathcal{O}_{\text{min}})$ belongs to the closure of $\mathbb{P}(\mathcal{O})$ for all non-zero nilpotent orbits $\mathcal{O}$. We conclude that $\mathbb{P}(\mathcal{O}_{\text{min}})$ is the unique minimal orbit in $\mathbb{P}(\mathcal{N})$, as desired.   
\end{proof}

Seeking to build on Proposition \ref{Proposition: Unique closed orbit}, we will assume $G$ to be simple for the duration of \ref{Subsection: Orbit projectivizations}.

\begin{remark}
Recall that the image of a projective variety under a morphism is necessarily closed. If $\mathbb{P}(\mathcal{O})$ is projective, we may take this morphism to be the inclusion $\mathbb{P}(\mathcal{O})\hookrightarrow\mathbb{P}(\mathfrak{g})$ and conclude that $\mathbb{P}(\mathcal{O})$ is closed in $\mathbb{P}(\mathfrak{g})$. Conversely, if $\mathbb{P}(\mathcal{O})$ is closed in $\mathbb{P}(\mathfrak{g})$, then the former is necessarily projective. These last two sentences explain that $\mathbb{P}(\mathcal{O})$ is projective if and only if it is closed in $\mathbb{P}(\mathfrak{g})$, which by Proposition \ref{Proposition: Unique closed orbit} is equivalent to $\mathcal{O}=\mathcal{O}_{\text{min}}$. In particular, $\mathbb{P}(\mathcal{O})$ is generally not projective.
\end{remark}

While Proposition \ref{Proposition: Unique closed orbit} is an important first step in understanding $\mathbb{P}(\mathcal{O}_{\text{min}})$, one can say a great deal more. Indeed, as $\mathbb{P}(\mathcal{O}_{\text{min}})$ is a $G$-orbit, it must be $G$-equivariantly isomorphic to $G/P$ for some closed subgroup $P\subseteq G$. Furthermore, since $\mathbb{P}(\mathcal{O}_{\text{min}})$ is projective, $P$ is necessarily a parabolic subgroup of $G$ (see \cite[Cor. 28.1.4]{Tauvel}). This subgroup contains a conjugate of $B$, meaning that $P$ is conjugate to a standard parabolic subgroup $P_S$ (as defined in \ref{Subsection: The basic objects}). One can use this to verify that $G/P\cong G/P_S$, so that $\mathbb{P}(\mathcal{O}_{\text{min}})$ and $G/P_S$ are $G$-equivariantly isomorphic for some subset $S\subseteq\Pi$. The following proposition shows that one can take $S$ to be 
$$\Pi(\theta):=\{\alpha\in\Pi:\langle\alpha,\theta\rangle=0\},$$ the set of simple roots that are orthogonal to the maximal root $\theta$.  

\begin{proposition}\label{Proposition: Description of P(Omin) as a partial flag variety}
	If $G$ is simple, then there is a $G$-variety isomorphism $G/P_{\Pi(\theta)}\cong\mathbb{P}(\mathcal{O}_{\emph{min}})$.
\end{proposition}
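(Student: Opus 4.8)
The plan is to exhibit the required isomorphism as an instance of the orbit-stabilizer theorem, by producing a point of $\mathbb{P}(\mathcal{O}_{\text{min}})$ whose $G$-stabilizer is exactly $P_{\Pi(\theta)}$. Since $\mathbb{P}(\mathcal{O}_{\text{min}})$ is a $G$-orbit in $\mathbb{P}(\mathfrak{g})$ (as discussed just before the statement), the natural candidate point is $[e_{\theta}]$, where $e_{\theta}\in\mathfrak{g}_{\theta}\setminus\{0\}$ and $\theta$ is the highest root. Once we show $C_G([e_{\theta}])=P_{\Pi(\theta)}$, the orbit-stabilizer isomorphism \eqref{Equation: Orbit-stabilizer isomorphism} gives $G/P_{\Pi(\theta)}\xrightarrow{\cong}\mathbb{P}(\mathcal{O}_{\text{min}})$, $[g]\mapsto[\Adj_g(e_{\theta})]$.

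First I would reduce the stabilizer computation to the level of Lie algebras. The group $P_{\Pi(\theta)}$ is connected (by the argument at the start of the proof of Proposition \ref{Proposition: Stabilizer structure}), so it suffices to show $P_{\Pi(\theta)}\subseteq C_G([e_{\theta}])$ together with the equality of Lie algebras $C_{\mathfrak{g}}([e_{\theta}])=\mathfrak{p}_{\Pi(\theta)}$, where $C_{\mathfrak{g}}([e_{\theta}])=\{y\in\mathfrak{g}:[y,e_{\theta}]\in\mathbb{C}e_{\theta}\}$ is the Lie algebra of the stabilizer of the line $\mathbb{C}e_{\theta}$. Using the root space decomposition \eqref{Equation: Root space decomposition} and $[\mathfrak{g}_{\alpha},\mathfrak{g}_{\theta}]\subseteq\mathfrak{g}_{\alpha+\theta}$, an element $h'+\sum_{\beta}x_{\beta}$ (with $h'\in\mathfrak{t}$, $x_\beta\in\mathfrak{g}_\beta$) lies in $C_{\mathfrak{g}}([e_{\theta}])$ precisely when: the $\mathfrak{t}$-component contributes $\theta(h')e_{\theta}\in\mathbb{C}e_{\theta}$ (always satisfied), the $\beta=0$ part is automatic, and for each root $\beta$ we need $[\mathfrak{g}_\beta,\mathfrak{g}_\theta]\subseteq\mathbb{C}e_\theta$, i.e.\ either $\beta+\theta$ is not a root and not zero (so the bracket vanishes) or $\beta=0$. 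Since $\theta$ is the highest root, $\theta+\beta$ is a root only if $\beta\in\Delta_-$, and one checks (using that $\theta$ is long and maximal, via the standard fact about root strings through $\theta$) that $\mathfrak{g}_\beta$ fails the condition exactly when $\theta+\beta\in\Delta$; the remaining negative roots $\beta$ with $\theta+\beta\notin\Delta$ are precisely those in $\Delta_{\Pi(\theta)}^-$, which is where the orthogonality condition $\langle\alpha,\theta\rangle=0$ enters — a negative root $-\gamma$ with $\gamma\in\Delta_+$ satisfies $\theta-\gamma\notin\Delta\cup\{0\}$ iff $\langle\gamma,\theta\rangle\le 0$, and among simple roots this is $\langle\alpha,\theta\rangle=0$. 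Assembling, $C_{\mathfrak{g}}([e_{\theta}])=\mathfrak{t}\oplus\bigoplus_{\beta\in\Delta_+}\mathfrak{g}_\beta\oplus\bigoplus_{\alpha\in\Delta_{\Pi(\theta)}^-}\mathfrak{g}_\alpha=\mathfrak{p}_{\Pi(\theta)}$, matching \eqref{Equation: Lie algebra of standard parabolic}.

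Next I would record that $B\subseteq C_G([e_{\theta}])$ (clear, since $\mathfrak{b}$ acts on $e_\theta$ through $\mathfrak{g}_\theta$ up to scalars), hence $C_G([e_{\theta}])$ is a parabolic containing $B$, i.e.\ a standard parabolic; combined with the Lie-algebra equality just established and the bijection between standard parabolics and subsets of $\Pi$ (recalled in \ref{Subsection: The basic objects}), this forces $C_G([e_{\theta}])=P_{\Pi(\theta)}$. The main obstacle is the root-combinatorics step identifying which negative roots $\beta$ satisfy $\theta+\beta\notin\Delta$: this relies on $\theta$ being the unique highest (hence dominant, long) root and on the precise behaviour of $\theta$-strings, and care is needed in types with multiple root lengths to confirm that the orthogonality characterization $\Delta_{\Pi(\theta)}^- = \{\beta\in\Delta_-: \theta+\beta\notin\Delta\cup\{0\}\}$ is exact. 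Everything else — connectedness, the orbit-stabilizer isomorphism, and $G$-equivariance of the resulting map — is formal given the earlier results in the paper.
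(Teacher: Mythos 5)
Your proposal is correct and follows essentially the same route as the paper: take the point $[e_{\theta}]\in\mathbb{P}(\mathcal{O}_{\text{min}})$, observe that $B$ stabilizes the line $\mathbb{C}e_{\theta}$ so that $C_G([e_{\theta}])$ is a standard parabolic $P_S$, and then pin down $S=\Pi(\theta)$ by deciding for which simple roots $\alpha$ one has $[\mathfrak{g}_{-\alpha},\mathfrak{g}_{\theta}]=\{0\}$. The only difference is cosmetic: for the equivalence $[\mathfrak{g}_{-\alpha},\mathfrak{g}_{\theta}]=\{0\}\Leftrightarrow\langle\alpha,\theta\rangle=0$ you invoke root strings and the dominance of $\theta$ (which does make your characterization of $\Delta_{\Pi(\theta)}^{-}$ go through, and does not actually require $\theta$ to be long), whereas the paper argues one direction via the Jacobi identity and the coroot $h_{\alpha}$ and the other via the reflection $s_{\alpha}$ and the maximality of $\theta$.
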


\begin{proof}
Fix $e_{\theta}\in\mathfrak{g}_{\theta}\setminus\{0\}$ and recall that $e_{\theta}\in\mathcal{O}_{\text{min}}$ (see Theorem \ref{Theorem: Existence of a minimal nilpotent orbit}). It follows that $[e_{\theta}]\in\mathbb{P}(\mathcal{O}_{\text{min}})$, so that $\mathbb{P}(\mathcal{O}_{\text{min}})$ is the $G$-orbit of $[e_{\theta}]$ in $\mathbb{P}(\mathfrak{g})$. We claim that $C_G([e_{\theta}])=P_{\Pi(\theta)}$, which together with the orbit-stabilizer isomorphism \eqref{Equation: Orbit-stabilizer isomorphism} will prove the proposition. 

Since $G$ is simple, its adjoint representation is irreducible (see \cite[Sect. 0.11]{HumphreysConjugacy}) and $\theta$ is the maximal weight of this representation. It follows that $\Adj(b)(e_{\theta})$ is a non-zero multiple of $e_{\theta}$ for each $b\in B$ (see \cite[Sect. 31.3]{Humphreys}). This shows that $B\subseteq C_G([e_{\theta}])$, meaning that $C_G([e_{\theta}])$ is a standard parabolic subgroup of $G$. In other words, $C_G([e_{\theta}])=P_S$ for some subset $S\subseteq\Pi$, and we must show $S=\Pi(\theta)$. Now by the correspondence between subsets of $\Pi$ and standard parabolic subgroups (see \ref{Subsection: The basic objects}), $S$ is the set of simple roots $\alpha$ for which $\mathfrak{g}_{-\alpha}$ belongs to $C_{\mathfrak{g}}([e_{\theta}])$, the Lie algebra of $C_G([e_{\theta}])$. One can show that \\
$C_{\mathfrak{g}}([e_{\theta}])=\{\xi\in\mathfrak{g}:[\xi,\mathfrak{g}_{\theta}]\subseteq \mathfrak{g}_{\theta}\}$, so that $\alpha\in\Pi$ belongs to $S$ if and only if $[\mathfrak{g}_{-\alpha},\mathfrak{g}_{\theta}]\subseteq\mathfrak{g}_{\theta}$. Furthermore, as $[\mathfrak{g}_{-\alpha},\mathfrak{g}_{\theta}]\cap\mathfrak{g}_{\theta}=\{0\}$, it follows that $\alpha$ belongs to $S$ if and only if $[\mathfrak{g}_{-\alpha},\mathfrak{g}_{\theta}]=\{0\}$. Hence, showing $S=\Pi(\theta)$ amounts to proving that $\alpha\in\Pi$ satisfies $[\mathfrak{g}_{-\alpha},\mathfrak{g}_{\theta}]=\{0\}$ if and only if $\langle\alpha,\theta\rangle=0$. 
	
Assume that $[\mathfrak{g}_{-\alpha},\mathfrak{g}_{\theta}]=\{0\}$.
Since $\theta$ is the maximal root, $\alpha+\theta$ is not a root and we have 
$[\mathfrak{g}_{\alpha},\mathfrak{g}_{\theta}]\subseteq\mathfrak{g}_{\alpha+\theta}=\{0\}$. Applying the Jacobi identity, we see that
\begin{equation}\label{Equation: Three Root Spaces}
[[\mathfrak{g}_{-\alpha},\mathfrak{g}_{\alpha}],\mathfrak{g}_{\theta}]=\{0\}.
\end{equation} Letting $h_{\alpha}\in[\mathfrak{g}_{-\alpha},\mathfrak{g}_{\alpha}]$ be the coroot associated with $\alpha$, \eqref{Equation: Three Root Spaces} gives 
$$0=[h_{\alpha},e_{\theta}]=\theta(h_{\alpha})e_{\theta}=2\frac{\langle\alpha,\theta\rangle}{\langle\alpha,\alpha\rangle}e_{\theta}.$$ It follows that $\langle\alpha,\theta\rangle=0$.
	
Conversely, assume that $\langle\alpha,\theta\rangle=0$. If $[\mathfrak{g}_{-\alpha},\mathfrak{g}_{\theta}]\neq\{0\}$, then $\mathfrak{g}_{\theta-\alpha}\neq\{0\}$ and $\theta-\alpha$ is a root. Since the set of roots is invariant under the action of $W$ on $\mathfrak{t}^*$, $s_{\alpha}\cdot(\theta-\alpha)$ is also a root. However, $$s_{\alpha}\cdot(\theta-\alpha)=s_{\alpha}\cdot\theta-s_{\alpha}\cdot\alpha=\theta-2\frac{\langle\alpha,\theta\rangle}{\langle\alpha,\alpha\rangle}\alpha+\alpha=\theta+\alpha,$$ and this contradicts the maximality of $\theta$. We conclude that $[\mathfrak{g}_{-\alpha},\mathfrak{g}_{\theta}]=\{0\}$. 
\end{proof}

\begin{example}\label{Example: Description of P(Omin) in type A}
One can use Proposition \ref{Proposition: Description of P(Omin) as a partial flag variety} to give a more classical description of $\mathbb{P}(\mathcal{O}_{\text{min}})$ when $G=\SL_n(\mathbb{C})$. To do this,  recall the $\SL_n(\mathbb{C})$-specific discussion of the maximal torus, Borel subgroup, roots, simple roots, etc. given in Example \ref{Example: The type A setup}. It turns out that $\theta=t_1-t_n$ is the maximal root, which by the Killing form description \eqref{Equation: Type A Killing form} implies that \\ $\Pi(\theta)=\{t_i-t_{i+1}:2\leq i\leq n-2\}$. The set $\Delta_{\Pi(\theta)}^{-}$ of negative roots expressible as linear combinations of the roots in $\Pi(\theta)$ is $\Delta_{\Pi(\theta)}^{-}=\{t_i-t_j:2\leq j<i\leq n-1\}$. Using \eqref{Equation: Lie algebra of standard parabolic} to compute $\mathfrak{p}_{\Pi(\theta)}$, one finds the corresponding parabolic subgroup $P_{\Pi(\theta)}\subseteq\SL_n(\mathbb{C})$ to be
\begin{equation}\label{Equation: Type A parabolic for P(Omin)}P_{\Pi(\theta)}=\{(A_{ij})\in\SL_n(\mathbb{C}):A_{i1}=0\text{ for all }i\geq 2\text{ and }A_{nj}=0\text{ for all }j\leq n-1\}.
\end{equation}
In other words, a matrix $A\in\SL_n(\mathbb{C})$ belongs to $P_{\Pi(\theta)}$ if and only if it is block upper-triangular with blocks of dimensions $1\times 1$, $(n-2)\times (n-2)$, and $1\times 1$ along the diagonal (read from top to bottom). 

Now, recall that Example \ref{Example: Stabilizer in a flag variety} computed the $\GL_n(\mathbb{C})$-stabilizer of the flag $V_{\bullet}^e\in\text{Flag}(d_{\bullet};\mathbb{C}^n)$. Note that the $\SL_n(\mathbb{C})$-stabilizer of $V_{\bullet}^e$ is obtained by intersecting $\SL_n(\mathbb{C})$ with the $\GL_n(\mathbb{C})$-stabilizer. Furthermore, when $d_{\bullet}=(1,n-1)$, it is not difficult to see that $C_{\SL_n(\mathbb{C})}(V_{\bullet}^e)$ coincides with \eqref{Equation: Type A parabolic for P(Omin)}. The action of $\SL_n(\mathbb{C})$ on $\text{Flag}(1,n-1;\mathbb{C}^n)$ is transitive, so that we have
$\text{Flag}(1,n-1;\mathbb{C}^n)\cong\SL_n(\mathbb{C})/C_{\SL_n(\mathbb{C})}(V_{\bullet}^e)=\SL_n(\mathbb{C})/P_{\Pi(\theta)}$. Proposition \eqref{Proposition: Description of P(Omin) as a partial flag variety} identifies $\SL_n(\mathbb{C})/P_{\Pi(\theta)}$ with $\mathbb{P}(\mathcal{O}_{\text{min}})$, meaning that we have an $\SL_n(\mathbb{C})$-variety isomorphism
$$\mathbb{P}(\mathcal{O}_{\text{min}})\cong\text{Flag}(1,n-1;\mathbb{C}^n).$$
\end{example}

\begin{remark}
Using similar ideas, one can obtain classical descriptions of $\mathbb{P}(\mathcal{O}_{\text{min}})$ in other Lie types. In type $D_n$, for instance, $\mathbb{P}(\mathcal{O}_{\text{min}})$ identifies with the Grassmannian of isotropic $2$-dimensional subspaces in $\mathbb{C}^{2n}$ (see \cite[Sect. 4.2]{CrooksRayan}) 
\end{remark}

On a somewhat different note, $\mathbb{P}(\mathcal{O}_{\text{min}})$ turns out to feature prominently in \textit{quaternionic-K\"{a}hler geometry} (see \cite{Swann} for a brief introduction). To develop this point, let $X$ be a complex manifold and $E$ a holomorphic subbundle of $TX$. One then has a short exact sequence of holomorphic bundles on $X$,
$$0\rightarrow E\rightarrow TX\xrightarrow{\theta} F\rightarrow 0,$$ where $F:=TX/E$ and $\theta$ is the quotient map. Let $\mathcal{E}$ and $\mathcal{F}$ denote the sheaves of sections of $E$ and $F$, respectively, and consider the following pairing:
\begin{equation}\label{Equation: Pairing}\mathcal{E}\otimes \mathcal{E}\rightarrow \mathcal{F},\quad(s_1,s_2)\mapsto \theta([s_1,s_2]).\footnote{Note that $s_1$ and $s_2$ may be regarded as vector fields via the inclusion $E\subseteq TX$, so that $[s_1,s_2]$ makes sense.}\end{equation}
One calls $E$ a \textit{complex contact structure} if $E$ has co-rank one in $TX$ and the pairing \eqref{Equation: Pairing} is non-degenerate (on both local and global sections of $E$). Once endowed with a complex contact structure, $X$ is called a \textit{complex contact manifold}.

Now, let $\mathcal{O}\subseteq\mathfrak{g}$ be a non-zero nilpotent orbit and recall the canonical symplectic form $\omega_{\mathcal{O}}\in\Omega^2(\mathcal{O})$ defined in \eqref{Equation: The canonical symplectic form}. One can use $\omega_{\mathcal{O}}$ to induce a canonical complex contact structure on $\mathbb{P}(\mathcal{O})$ (see \cite[Prop. 2.2, Rmk. 2.3, Sect. 2.4]{Beauville}\footnote{Note that some of these are phrased in the language of coadjoint orbits, rather than adjoint orbits. However, recall from \ref{Subsection: Geometric features} that one can identify coadjoint orbits with adjoint orbits.}), so that $\mathbb{P}(\mathcal{O})$ is a complex contact manifold. In particular, $\mathbb{P}(\mathcal{O}_{\text{min}})$ belongs to the class of Fano contact manifolds (see \cite[Def. 1.3, Def. 2.5]{LeBrunSalamon}), which are of interest to quaternionic-K\"{a}hler geometers (see \cite[Sect. 1]{LeBrunSalamon}). Moreover, it is in the quaternionic-K\"{a}hler context that LeBrun and Salamon conjectured the following classification of Fano contact manifolds: each is isomorphic, as a complex contact manifold, to the projectivization of the minimal nilpotent orbit of a simple algebraic group. This conjecture is known to be true in a number of cases (see \cite{Beauville}), and counter-examples have not yet been found.

\begin{remark}
When $\mathbb{P}(\mathcal{O}_{\text{min}})$ is described in classical terms, as in Example \ref{Example: Description of P(Omin) in type A}, one should expect a similarly classical description of its complex contact structure. Such a description is given in \cite[Sect. 4.2]{CrooksRayan} in Lie type $D_n$, when $\mathbb{P}(\mathcal{O}_{\text{min}})$ is viewed as the Grassmannian of isotropic $2$-dimensional subspaces of $\mathbb{C}^{2n}$.
\end{remark} 

\subsection{Orbit closures and singularities}\label{Subsection: Orbit closures and singularities}
It turns out that nilpotent orbit closures have been studied extensively in the literature, both intrinsically and as parts of broader research programs. We now briefly survey some of this literature, giving references where necessary in order to avoid long digressions. 

Recall that the closure of $\mathcal{O}_{\text{reg}}$ is the nilpotent cone $\mathcal{N}$. This is singular variety and there exists a canonical resolution of singularities $\mu:T^*(G/B)\rightarrow\mathcal{N}$ \footnote{By ``resolution of singularities'', we mean that $\mu$ is a proper surjective morphism with the property of being an isomorphism over the smooth locus of $\mathcal{N}$.}, called the \textit{Springer resolution} (see \cite[Def. 3.2.4]{Chriss}\footnote{Note that this reference substitutes $G/B$ with the variety of all Borel subalgebras of $\mathfrak{g}$.}). While fundamental to the study of singularities in $\mathcal{N}$, $\mu$ also plays a significant role in adjacent subjects. In geometric representation theory, one constructs a representation of the Weyl group on the Borel-Moore homology of each fibre of $\mu$ (see \cite[Sect. 3.4--3.6]{Chriss}). In symplectic geometry, $T^*(G/B)$ and $\mathcal{O}_{\text{reg}}$ have canonical symplectic forms (see \cite[Sect. 1.1]{Chriss} for the former and \ref{Subsection: Geometric features} for the latter) and $\mu$ is an example of a \textit{symplectic resolution} (see \cite[Sect. 1.3]{Fu} for a definition, and all of \cite{Fu} for an introduction to the subject). This turns out to be an instance of a more general occurrence: for certain nilpotent orbits $\mathcal{O}\subseteq\mathfrak{g}$, one can find a parabolic subgroup $P\subseteq G$ and a symplectic resolution $T^*(G/P)\rightarrow\overline{\mathcal{O}}$ (see \cite[Sect. 5.1]{Fu}\footnote{This reference focuses on resolutions of normalizations of orbit closures, but many closures happen to be normal (see \cite{Kraft}, for example).}). These nilpotent orbit closures thereby give examples of \textit{symplectic singularities} (see \cite[Def. 1.1]{BeauvilleSingular}), which have received a great deal of recent attention (see \cite{BeauvilleSingular,FuNamikawa,Ginzburg,Kaledin}).            

In addition to the above, nilpotent orbits closures are sometimes studied through their intersections with certain affine-linear subsets of $\mathfrak{g}$, often called \textit{Slodowy slices} (see \cite[Prop. 3.7.15]{Chriss}). This study was initiated by the works of Brieskorn \cite{Brieskorn} and Slodowy \cite{Slodowy}, which show the intersection of $\overline{\mathcal{O}_{\text{reg}}}=\mathcal{N}$ with a particular Slodowy slice to contain a Kleinian surface singularity when $\mathfrak{g}$ is of type $ADE$ (see \cite[Sect. 6.4]{Slodowy} for further details). This result is interesting in many contexts, and in particular plays a role in Kronheimer's study of nilpotent orbits in hyperk\"{a}hler geometry (see \cite[Prop. 2(a), Prop. 2(b)]{KronheimerInstantons}).     
\bibliographystyle{acm} 
\bibliography{Adjoint}
\end{document}